\documentclass{article}
\usepackage{amsfonts}
\usepackage{amsmath}
\usepackage{amsthm}
\usepackage{harvard}
\usepackage{dsfont}
\usepackage{verbatim}
\usepackage{cite}
\usepackage{xcolor}
\usepackage{bm}

\usepackage[
a4paper,
left=2.5cm,
right=2.5cm,
top=2.5cm,
bottom=3.5cm
]{geometry}
\newtheorem{theorem}{Theorem}

\newtheorem{condition}[theorem]{Condition}

\newtheorem{definition}[theorem]{Definition}

\newtheorem{lemma}[theorem]{Lemma}

\newtheorem{problem}[theorem]{Problem}
\newtheorem{proposition}[theorem]{Proposition}
\newtheorem{remark}[theorem]{Remark}

\numberwithin{equation}{section}
\numberwithin{theorem}{section}

\begin{document}

\title{Grey Random Fields on the Sphere}
\author{L. Beghin, N. Leonenko, D. Marinucci}
\maketitle

\begin{abstract} 

We introduce a new and tractable class of non-Gaussian isotropic random fields on the sphere, which we call, for brevity, \textit{Grey Spherical Random Fields} (GSRFs). The proposed framework provides a flexible extension of the classical Gaussian theory, while preserving strong isotropy and embedding Gaussian random fields as a limiting special case.

The construction is based on random spherical harmonic coefficients obtained as Gaussian mixtures driven by multivariate subordinators evaluated at independent random times (with Mittag-Leffler or more general distribution). This representation allows us to derive explicit characteristic functions, mixed moments and cumulants of arbitrary order. We also investigate asymptotic high-frequency behaviour and identify general conditions under which suitably normalized spherical harmonic coefficients converge to Gaussian scale mixtures, recovering several important models, including stable, Gamma and inverse Gaussian cases, as particular examples.
Proving that GSRFs emerge as scaling limits of multivariate continuous-time random walks, with heavy-tailed waiting times, provides a microscopic foundation for the proposed models.

We further obtain quantitative bounds measuring the deviation from Gaussianity of the spherical harmonic coefficients. The resulting framework substantially broadens the class of analytically tractable isotropic random fields on the sphere, while preserving explicit probabilistic representations and flexible dependence structures.


\end{abstract}

\begin{itemize}
    \item \textit{Keywords and Phrases}: Spherical Random Fields, Isotropy, Grey Random Fields, Non-Gaussianity, Subordinators.
    \item \textit{AMS Classification}: 60G60, 60G51, 33E15.
\end{itemize}

\section{Introduction}

Over the last two decades, isotropic spherical random fields have attracted
a lot of interest and an enormous number of applications in a variety of different areas, including for instance Cosmology, Geophysics, Quantum Mechanics,
Machine Learning and Neural Networks; more recently, spherical random fields which evolve over time have also been considered, in view of important motivations arising in particular from Climate Science. 
The overwhelming
majority of this literature has focused on Gaussian circumstances; this is also due to the mathematical difficulty to introduce principled non-Gaussian models in these settings.

In particular, the main tool for studying finite variance isotropic spherical
random fields is the spectral representation theorem, which allows one to
express a spherical random field $\mathcal{T}(.):\mathbb S^{2} \times \Omega$ as%
\begin{equation}\label{eq:SpecRap}
\mathcal{T}(x,\omega)=\sum_{\ell =0}^{\infty }\sum_{m=-\ell }^{\ell }a_{\ell m}(\omega)Y_{\ell m}(x)%
\text{ , }x\in \mathbb S^{2},
\end{equation}%
where equality holds in both the $L^{2}(\Omega )$ and the $%
L^{2}(\Omega \times \mathbb S^{2})$ sense, the $Y_{\ell m}(.)$ is an orthonormal
system of eigenfunctions of the spherical Laplacian operator (\cite{YAD,Leo1999,MP11}), and
the coefficients $\left\{ a_{\ell m}\right\} $ are zero mean and
uncorrelated, with variance $E[a_{\ell m}^{2}]=C_{\ell }$, the latter
defining the angular power spectrum of the random field. Note that in this paper we shall always take the spherical harmonics and the corresponding random coefficients as real valued, without loss of generality.

The problem of
constructing isotropic non-Gaussian models starting from this representation can
be explained as follows: it can be shown that the coefficients $\left\{
a_{\ell m}\right\}_{\ell,m=-\ell,...,\ell}$ are independent if and only if the field is Gaussian,
hence sampling them independently from any other law will necessarily lead
to anisotropic models. For this reason, basically all forms of
non-Gaussianity considered in the literature have been obtained exploiting so-called ``local"
models, i.e. nonlinear pointwise transforms of the field in the real domain.
For instance, in the Cosmological community the benchmark model for
non-Gaussianity takes the form%
\begin{equation*}
\mathcal T_{NG}(x)=\mathcal T_{G}(x)+f_{NL}\left\{\mathcal T_{G}^{2}(x)-1\right\} \text{ ,}
\end{equation*}%
where $\left\{ \mathcal T_{G}(.)\right\} $ is a zero mean unit variance Gaussian
random field with a given angular power spectrum and $f_{NL}$ is a
nonlinearity parameter (see \cite{BKMR04,Planck18} and the references therein). In the regime where $f_{NL}$ is taken to be ``small",
it is possible to obtain some approximate expression for the spherical
harmonic coefficients of the field $\mathcal T_{NG}(x)$ and their joint cumulants of
order 3 and 4 (called bispectrum and trispectrum). These models have attracted an
enormous amount of interest in the Cosmology, leading to
hundreds of published papers, due to the simplicity and physical relevance of the resulting
non-Gaussianities: from the mathematical point of view, however, it cannot be
denied that these constructions are rather specific and hard to generalize. In particular, little is known about the properties of the corresponding spherical harmonic coefficients: their density or characteristic functions are virtually impossible to compute analytically. There have been other attempts to introduce non-Gaussian models, in particular considering weighted forms of Poisson random waves; see \cite{BDMT,GM2026}; once again, although third and fourth-order moments can be computed explicitly, these models do not allow any analytic derivation on the law of the spherical harmonic coefficients, other than convergence to a limiting Gaussian field in a suitable asymptotic limit.


\subsection{Overview}

Our purpose in this paper is to introduce a general construction which
allows us to generate much more general non-Gaussian models, at the same time allowing for Gaussianity as a special case. In order to do so, we shall need to specify a joint distribution for the triangular array of random coefficients 
$\{a_{\ell,m}\}_{(\ell=0,1,2,...,m=-\ell,...,\ell)}$, ensuring isotropy, i.e., invariance with respect to the action of the rotation group $SO(3)$. We will achieve this goal by introducing the class of 
\emph{Grey Spherical Random Fields}. For these models, we will provide a fully explicit form of the joint characteristic function for the spherical harmonic coefficients across different values of $\{\ell,m\}$; we will show that they cover Gaussianity as a special case. The definition ensures that strong isotropy is maintained and allows for an explicit computation of higher order moments and joint cumulants. The same construction is considered below also for random fields which do not have finite variance; similar ideas could be easily extended to cover sphere cross-time random fields, although we leave this issue for future research. 

In the sequel, we introduce this new class of random fields moving step by step towards greater and greater generality. In particular, our construction will depend on two ingredients: a general form of the (grey) characteristic functions, in terms of Mittag-Leffler functions and depending on a positive parameter $\beta$ and embedding the Gaussian case as a special one for $\beta=1$, and a general construction of L\'evy probability measures, which allow for an interpretation in terms of subordinators (see \cite{APP}, for general reference to the theory of subordinators, and \cite{GRO} for Mittag-Leffler analysis and grey Brownian motion, among many others). To model these subordinators we will allow broader circumstances, starting from i.i.d. random variables in the simplest construction and allowing instead for very broad, mutually dependent vector processes with random time in the most general case. Furthermore, we will establish rigorous quantitative upper bounds on the distance between the law of the random spherical harmonic coefficients in the Gray case and the multivariate Gaussian law; interestingly, these bounds depend on two additive components, one of which allows an interpretation in terms of the amount of ``Grayness" in the characteristic function, the other in terms of the amount of dependence in the coefficients (again, recall that random spherical harmonics are independent if and only if the field is Gaussian).

A further contribution of the paper is a probabilistic microscopic interpretation of the proposed models. We prove that GSRFs arise naturally as scaling limits of multivariate continuous-time random walks with heavy-tailed waiting times, providing a rigorous link between microscopic random spectral energy-transfer mechanisms and macroscopic non-Gaussian random fields on the sphere.

The proposed framework considerably enlarges the class of analytically tractable isotropic random fields available in the literature, combining explicit probabilistic representations, flexible dependence structures across harmonic scales, and a natural interpretation in terms of anomalous stochastic dynamics.

Grey Brownian motion and its generalizations provide one of the best-known examples of non-Gaussian stochastic processes whose finite-dimensional distributions are governed by Mittag-Leffler characteristic functions (starting from the paper \cite{SCH} by Schneider). These models have attracted considerable attention in the study of anomalous diffusion and fractional stochastic dynamics, thanks to their explicit probabilistic representations through random times or Gaussian scale mixtures driven by non-negative random variables; see, for example, \cite{GRO}, \cite{MUR}. Alternative forms of the Grey Brownian motions, defined by means of the incomplete Gamma function or completely monotone functions, more general than the Mittag-Leffler one, can be found in \cite{BCG} and \cite{BCDS}.

The construction proposed in this paper can be viewed as a natural extension of this framework from stochastic processes to isotropic random fields on the sphere, preserving the analytical tractability of Grey distributions while allowing for a rich dependence structure across harmonic frequencies.

Our definition is thus conceptually different from the fractional spherical random fields introduced in \cite{DOV}: while in their framework fractional operators act on an underlying Gaussian spherical field and modify its spectral dynamics through the eigenvalues of the spherical Laplacian, our GSRFs are intrinsically non-Gaussian, with non-Gaussianity and cross-multipole dependence arising directly from Gaussian variance mixtures driven by a multivariate subordinator, whose joint Laplace exponents encode the dependence structure among different harmonic scales.

\subsection{Outline of the paper}

In Section \ref{sec:Background} we collect some background notation and results which are used throughout this work, whereas Section \ref{sec:GreyGen} introduces the general class of GSRFs and establishes thier probabilistic representation in terms of multivariate subordinators evaluated at an independent Mittag-Leffler random time. Section \ref{sec:GreyInd}
is devoted to our construction, in the case of independent subordinators, for which we derive explicit expressions of moments and cumulants, together with high-frequency limit results for the spherical harmonic coefficients. The more general dependent circumstances are dealt with in Section \ref{secdepsub}, where we provide interpretation of the random fields as the scaling limit of multivariate continuous-time random walks, with heavy-tailed waiting times. Explicit examples within the class of tempered stable subordinators are also discussed. Section \ref{sec:bounds} includes our main results on the quantitative bounds in the distance to Gaussianity, as an explicitly function of some Greyness/dependence parameters to be introduced before; Section \ref{sec:GGSRF} considers again the simpler case of independent and identically distributed subordinators, but under much more general, non-Gaussian characteristic functions. Section \ref{sec:space-time} introduces a space-time extension of GSRFs and derives the time-fractional evolution equation governing the joint law of their spherical harmonic coefficients.
A number of technical results and applications of the previous models to relevant special cases are collected in the Appendices.

\section{Background and Notation}\label{sec:Background}
\subsection{Aims of the Paper}
In order for a spherical random field to be isotropic, i.e. invariant in law to rotations, its spherical harmonic coefficients $a_{\ell,m}, \ell=0,1,2,...,m=-\ell,...,\ell$ must satisfy some rather tight constraints. In particular, it is well-known that a random field is (strongly) isotropic if and only if, for each value of $\ell$, the following invariance conditions are met:
\begin{equation}\label{InvCon}
\boldsymbol{a}_{\ell,\cdot}\overset{d}{=}D_{\ell}(g)\boldsymbol{a}_{\ell,\cdot}\ , \text{ for each } g \in SO(3) \ ,
\end{equation}
where we wrote $\boldsymbol{a}_{\ell,\cdot}$ for the column vector $(a_{\ell,-\ell},...,a_{\ell,-\ell})^T$, whereas the $(2\ell+1) \times (2\ell+1)$ matrices $D_{\ell}(g)$ represent the so-called Wigner's representation of the compact group $SO(3)$, see \cite{MP11} for more discussion and details (the statement holds for complex-valued spherical harmonics coefficients, but it can be easily adapted to the real-valued expansion that we use in this paper). Our aim is to build a non-Gaussian model which satisfies the following three conditions:

\begin{itemize}
\item It must satisfy the isotropy condition

\item It must embed Gaussian models as a special case

\item It must allow for an analytic representation of the law of its spherical harmonic coefficients, either in terms of their density or their characteristic functions.
\end{itemize}

In order to achieve these goals, our plan is to introduce a class of models  with fully explicit characteristic functions for the random spherical harmonic coefficients; these characteristic functions will depend only on the norm of the random spherical harmonic coefficients for fixed ``multipole" $\ell$, and they will hence be invariant to rotations. Nevertheless, they will allow for a rich, non-trivial structure of dependence across different multipoles. To introduce this class of functions, we must now review some facts in the recent literature on Grey Processes (see \cite{SCH}, \cite{GRO}, \cite{MUR}, among many others), which will provide the building blocks for our construction. 

\subsection{Background on Special Functions and Subordinators}

In this subsection, we review some standard facts about special functions, which are important tools in the definition of Grey Processes; we refer to the books \cite{GOR} and \cite{KIL}, for more details and discussion. 

We start by recalling the  Mittag-Leffler function, which is defined as \begin{equation}
    E_\beta(s):=\sum_{j=0}^\infty \frac{s^{j}}{\Gamma(\beta j+1)},  \qquad s,\beta \in \mathbb{C}. \label{ML}
\end{equation}

The following integral representation over  the Hankel path $Ha$ (i.e. a loop that starts and ends at $-\infty$ and encircles the circular disk $|\omega|\leq |s|^{1/\beta}$: $-\pi \leq arg \omega \leq \pi$) can be easily obtained, by resorting to the contour integral representation of the reciprocal Gamma function:
\begin{eqnarray}
  E_{\beta}(s)&=& \frac{1}{2\pi i} \sum_{j=0}^\infty s^{j} \int_{Ha}e^\omega \omega^{-1-\beta j}d\omega = [\text{for } |\omega|>|s|^{1/\beta} ]\label{ml4}\\ 
  &=& \frac{1}{2\pi i}\int_{Ha}e^\omega \frac{\omega^{-1}}{1-s/\omega^{\beta}}d\omega =\frac{1}{2\pi i}\int_{Ha}\frac{e^\omega \omega^{\beta-1}}{\omega^\beta -s}d\omega, \qquad s \in \mathbb{C}. \notag
\end{eqnarray}

It is well-known that the function $ E_{\beta}(-s)$, for $s \in \mathbb{R}_+$, $\beta \in (0,1)$, is completely monotone (i.e. it is $C^\infty$ and such that $(-1)^n E_\beta^{(n)}(-s)\geq 0, $ for any $n \geq 1$ and $s\geq 0$) and it coincides with the Laplace transform of the Mainardi function (see e.g. \cite{GOR})
\begin{equation}
    M_\beta(x):=\sum_{j=0}^\infty\frac{(-x)^j}{j! \Gamma(-\beta j+1-\beta)}=\frac{1}{2\pi i}\int_{Ha}\frac{e^{\sigma-x\sigma^\beta}}{\sigma^{1-\beta}}d\sigma, \qquad x \geq 0 \label{mai}
\end{equation}
as can be also checked directly from \eqref{ml4} and \eqref{mai}. The function  \eqref{mai} is the density of an absolutely continuous random variable $T_\beta$ with finite moments given by 
\begin{equation}\label{mom}
\mathbb{E}T_\beta^\delta=\frac{\Gamma(\delta+1)}{\Gamma(\beta \delta +1)},  \qquad \delta\geq -1,
\end{equation}
(see \cite{GOR}).

Finally, we recall that the Mittag-Leffler function (with parameter $\beta$) coincides with the eigenfunction of the Caputo fractional derivative of order $\beta$, i.e.
\begin{equation}
{}^C D_t^\beta
E_\beta(-\lambda t^\beta)
=
-\lambda E_\beta(-\lambda t^\beta),\qquad t \geq 0,\lambda \in \mathbb R,\label{Caputo}
\end{equation}
where \begin{equation}
{}^C D_t^\beta
f(t):
=
\begin{cases}
    \frac{1}{\Gamma(1-\beta)}\int_0^tf'(s)(t-s)^{-\beta} ds, \qquad \beta \in (0,1), \notag \\
    f'(t), \qquad \beta =1.
\end{cases}\label{Caputo2}
\end{equation}
Let us now also recall that a subordinator $\mathcal{H}:=\left\{ \mathcal{H}(t) \right\}_{t \geq 0}$ is an a.s. non-decreasing L\'evy process, such that $\mathcal H(0)=0$ a.s., whose Laplace transform reads
\[\mathbb{E}e^{-\kappa\mathcal{H}(t)  }=e^{-t\varphi(\kappa)}, \qquad \kappa >0, t \geq 0,\]
where $\varphi(\cdot)$ is a Bernstein function (i.e. is such that $(-1)^{n-1} \varphi^{(n)}(\kappa)\geq 0, $ for any $n \geq 1$ and $\kappa > 0$), which is called Laplace exponent of $\mathcal{H}$ (see e.g. \cite{APP}). In the multivariate case, we define the $d$-dimensional subordinator $\boldsymbol{\mathcal H}:=\left\{(\mathcal{H}_{1}(t),...,\mathcal{H}_{d}(t))\right\}_{t \geq 0}$ as the L\'evy process with Laplace exponent $\varphi(\kappa_1,...,\kappa_k)$, i.e. such that
\begin{equation}\mathbb{E}e^{-\kappa_1 \mathcal{H}_1(t)...-\kappa_d \mathcal{H}_d(t)  }=e^{-t\varphi(\kappa_1,...,\kappa_d)}, \qquad \kappa_i >0, i=1,...,d, \text{ and } t \geq 0, \label{lapsub}\end{equation}
where $\varphi:\mathbb{R}_+^d \rightarrow \mathbb{R}_+$ is a multidimensional Bernstein function, i.e. there exist constants $a\geq 0$, $\boldsymbol{b} \in \mathbb{R}_+^d$ and a measure $\mu(\cdot)$ on $\mathbb{R}_+^d$ verifying $\int_{\mathbb{R}_+^d}\Vert \boldsymbol{s} \Vert_d/(1+\Vert \boldsymbol{s} \Vert_d) \mu(\boldsymbol{ds}) < \infty$, such that
\[
\varphi(\boldsymbol{x})=\boldsymbol{a}+\langle \boldsymbol{b} ,\boldsymbol{x} \rangle+\int_{\mathbb{R}^d_+}(1-e^{-\langle \boldsymbol{x},\boldsymbol{s} \rangle})\mu(\boldsymbol{ds}), \qquad \boldsymbol{x} \in \mathbb{R}_+^d,
\]
where $\Vert \boldsymbol
x\Vert_d:=\sqrt{x_1^2+...+x_d^2} $ (see e.g. \cite{BAR}). We also recall that a multivariate function $f:\mathbb{R}_+^d \rightarrow \mathbb{R}$ is completely monotone if it is $C^\infty$ and such that $(-1)^{n_1+...+n_d}\frac{\partial^{n_1+...+n_d}f }{\partial x^{n_1}\cdots\partial x^{n_d}} \geq 0$, for any combination of $n_1,...,n_d \geq 0$  (see \cite{BOC}, p.87). Then the following (unique representation) holds:
\[
f(\boldsymbol{x})=\int_{\mathbb{R}_+^d}e^{-\langle \boldsymbol{x},\boldsymbol{s} \rangle}\mu(\boldsymbol{ds}), \qquad \boldsymbol{x} \in \mathbb{R}_+^d.
\]

\section{Grey Random Fields on $\mathbb{S}^{2}$: definitions and generalities} \label{sec:GreyGen}

In order to implement our construction of Grey spherical random fields (hereafter, GSRFs), we need first to introduce the following multivariate function, which we prove to be completely monotone in our first result below.

\begin{definition}\label{defsub}
    Let $\varphi: \mathbb{R}^d_+ \rightarrow \mathbb{R}_+$ be the Laplace exponent of the multivariate subordinator  $\boldsymbol{\mathcal{H}}$,
   i.e. 
\[
\varphi(\boldsymbol{\kappa})
=
\langle
\boldsymbol b,
\boldsymbol{\kappa}
\rangle
+
\int_{\mathbb R_+^d\setminus\{0\}}
\left(
1-e^{-\langle\boldsymbol{\kappa},\boldsymbol{x}\rangle}
\right)
\nu(d\boldsymbol{x}),
\]
for $\boldsymbol{b}\in \mathbb R^d$ and $\nu(\cdot)$ a L\'evy measure on $\mathbb R^d_+\setminus\{0\}$, with \(
\int_{\mathbb R_+^d}
(1\wedge \|x\|)
\,\nu(dx)
<\infty.
\)
    Then, for any $\beta \in (0,1]$, we define

    \begin{equation}
\Psi_\beta(\kappa_1,..,\kappa_d):=E_\beta(-\varphi(\kappa_1,...,\kappa_d))=\sum_{j=0}^\infty \frac{\left(-\varphi(\kappa_1,...,\kappa_d)\right)^j}{\Gamma(\beta j+1)}, \label{psi}
    \end{equation}
    for $\beta \in (0,1]$ and $ \kappa_j\geq 0$, $j=1,..,d$.
\end{definition}

\begin{lemma}\label{theocm}
The function $\Psi_\beta:\mathbb{R}_+^d \rightarrow \mathbb{R}_+$ defined in \eqref{psi} is completely monotone and coincides with the Laplace transform of $(\mathcal{H}_1(T_\beta),...,\mathcal{H}_d(T_\beta))$, where $T_\beta$ is a r.v., independent from $\left\{(\mathcal{H}_{1}(t),...,\mathcal{H}_{d}(t))\right\}_{t \geq 0}$ and with density function  $M_\beta(\cdot)$ given in \eqref{mai}.
\end{lemma}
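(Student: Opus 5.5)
The plan is to prove the second claim first, namely the identification of $\Psi_\beta$ as a Laplace transform; complete monotonicity will then follow from Bernstein's theorem, or directly by differentiating under the integral.

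\emph{Laplace transform identification.} Fix $\boldsymbol{\kappa}\in\mathbb{R}_+^d$. Since $T_\beta$ is independent of $\boldsymbol{\mathcal H}$, we condition on $T_\beta=t$ and use \eqref{lapsub}:
\begin{equation*}
\mathbb{E}e^{-\langle \boldsymbol{\kappa},\boldsymbol{\mathcal H}(T_\beta)\rangle}=\int_0^\infty \mathbb{E}e^{-\langle \boldsymbol{\kappa},\boldsymbol{\mathcal H}(t)\rangle}M_\beta(t)\,dt=\int_0^\infty e^{-t\varphi(\boldsymbol{\kappa})}M_\beta(t)\,dt .
\end{equation*}
Conditioning is legitimate because the integrand is bounded by $1$. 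It remains to show that the right-hand side equals $E_\beta(-s)$ with $s=\varphi(\boldsymbol{\kappa})\ge 0$; this is the fact, recalled in the background, that $E_\beta(-s)$ is the Laplace transform of $M_\beta$. For completeness I would verify it by expanding $e^{-ts}=\sum_j (-s)^j t^j/j!$ and integrating term by term using the moments \eqref{mom}, which gives $\mathbb{E}T_\beta^j=j!/\Gamma(\beta j+1)$. The resulting series is $\sum_j (-s)^j/\Gamma(\beta j+1)=E_\beta(-s)$. The exchange of sum and expectation is justified by Fubini, since
\begin{equation*}
\sum_j s^j\,\mathbb{E}T_\beta^j/j!=\sum_j s^j/\Gamma(\beta j+1)=E_\beta(s)<\infty,
\end{equation*}
because $E_\beta$ is entire. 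For $\beta=1$ we have $T_1\equiv 1$ (with $M_1=\delta_1$, consistent with \eqref{mom}), and the identity reduces to $e^{-\varphi}$ directly.

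\emph{Complete monotonicity.} Write $\Psi_\beta(\boldsymbol{\kappa})=\int_{\mathbb{R}_+^d} e^{-\langle\boldsymbol{\kappa},\boldsymbol{x}\rangle}\,\mu_\beta(d\boldsymbol{x})$, where $\mu_\beta$ is the law of $\boldsymbol{\mathcal H}(T_\beta)$, a probability measure on $\mathbb{R}_+^d$. On $(0,\infty)^d$, differentiation under the integral is justified by dominated convergence, because $x_1^{n_1}\cdots x_d^{n_d}e^{-\langle\boldsymbol{\kappa},\boldsymbol{x}\rangle}$ is bounded for $\boldsymbol{\kappa}$ bounded away from $0$. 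This gives
\begin{equation*}
(-1)^{n_1+\dots+n_d}\partial^{n_1}_{\kappa_1}\cdots\partial^{n_d}_{\kappa_d}\Psi_\beta=\int x_1^{n_1}\cdots x_d^{n_d}e^{-\langle\boldsymbol{\kappa},\boldsymbol{x}\rangle}\mu_\beta(d\boldsymbol{x})\ge 0 ,
\end{equation*}
so $\Psi_\beta$ is $C^\infty$ with the required sign pattern. Also $0<\Psi_\beta\le 1$, so it maps into $\mathbb{R}_+$.

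An alternative, purely analytic route is to note that $E_\beta(-\cdot)$ is completely monotone on $\mathbb{R}_+$ and $\varphi$ is a multivariate Bernstein function. The composition of a completely monotone function with a Bernstein function is completely monotone, which one sees via a multivariate Faà di Bruno argument. I would mention this route only as a remark, since the probabilistic representation makes it unnecessary.

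\emph{Main obstacle.} The only delicate points are the boundary of $\mathbb{R}_+^d$, where some $\kappa_i=0$ and derivatives may be infinite because $\boldsymbol{\mathcal H}(T_\beta)$ may lack moments, and the justification of the interchange in the Laplace transform computation. I will handle the boundary by understanding complete monotonicity on the open orthant, extending continuously to the closed one, and I will rely on the Fubini bound via the entire function $E_\beta$ stated above for the interchange.
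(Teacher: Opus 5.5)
Your proposal is correct and follows the paper's own argument: condition on $T_\beta$, use $\mathbb{E}e^{-\langle\boldsymbol{\kappa},\boldsymbol{\mathcal H}(t)\rangle}=e^{-t\varphi(\boldsymbol{\kappa})}$ together with the fact that $E_\beta(-s)$ is the Laplace transform of $M_\beta$, and read off complete monotonicity from the resulting Laplace-transform representation. The extra details you supply are correct refinements of steps the paper only cites or leaves implicit: the moment-series check of $\int_0^\infty e^{-st}M_\beta(t)\,dt=E_\beta(-s)$ with the Fubini bound $E_\beta(s)<\infty$, and differentiation under the integral on compact subsets of the open orthant $(0,\infty)^d$.
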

\begin{proof}
By a conditioning argument, we can write
\begin{eqnarray}
  \mathbb{E}e^{-\kappa_1  \mathcal{H}_1(T_\beta)...-\kappa_d \mathcal{H}_d(T_\beta)  }&=&  \int_{0}^\infty\mathbb{E}e^{-\kappa_1 \mathcal{H}_1(t)...-\kappa_d  \mathcal{H}_d(t)  }M_\beta(t)dt\notag\\
 &=&\int_{0}^\infty e^{-t\varphi(\kappa_1,...,\kappa_d)}M_\beta(t)dt=E_\beta(-\varphi(\kappa_1,...,\kappa_d)), \notag
\end{eqnarray}
for $\kappa_i >0, i=1,...,d,$ by recalling that $E_\beta(-s)=\int_0^\infty e^{-sx}M_\beta(x)dx$. As a consequence, the following representation holds for the function defined in \eqref{psi}:
\begin{equation}
\Psi_\beta(\kappa_1,..,\kappa_d)=\int_{\mathbb{R}_+^d}e^{-\langle \boldsymbol{x},\boldsymbol{s} \rangle}\mu(\boldsymbol{ds}), \qquad \boldsymbol{x} \in \mathbb{R}_+^d,
\end{equation}
where $\mu(\cdot)$ is the joint distribution of $(\mathcal{H}_1(T_\beta),...,\mathcal{H}_d(T_\beta))$.
\end{proof}
\begin{remark}
In the special case $\beta=1$, the function $\Psi_\beta(\cdot,...,\cdot)$ coincides with the Laplace transform of the density of the r.v. $(\mathcal{H}_{1}(1),...,\mathcal{H}_{d}(1))$, i.e. $\Psi_1(\kappa_1,...,\kappa_d)=e^{-\varphi(\kappa_1,..,\kappa_d)}$, since, in this case, $M_\beta(x)=\delta(x-1)$, where $\delta(\cdot)$ is the Dirac delta function with pole in the origin. 
\end{remark}
We will distinguish three cases, with increasing degrees of generality, according to the assumptions on the multivariate distribution and dependence structure of the subordinators $(\mathcal{H}_1(T_\beta),...,\mathcal{H}_d(T_\beta))$ in Def. \ref{defsub}. 

More precisely, we will say that 

\begin{definition}\label{defGRF}
    
A spherical random field with power spectrum $\left\{ C_{\ell
}\right\} $ is Grey if for any choice of multipoles $\ell_1,...,\ell_d$ we have that%
\begin{equation}\label{def:GeneralGF}
\Phi _{\boldsymbol{a}_{\ell _{1\cdot}},...,\boldsymbol{a}_{\ell
_{d\cdot}}}(\boldsymbol{\xi}_{\ell_1\cdot},\boldsymbol{\xi}_{\ell_2\cdot},...,\boldsymbol{\xi}_{\ell_d\cdot})=E_\beta\left(-\varphi_{\ell_1,...\ell_d}(C_{\ell_1}
\Vert\boldsymbol \xi_{\ell_1\cdot}\Vert^2/2,...,C_{\ell_d}
\Vert\boldsymbol \xi_{\ell_d\cdot}\Vert^2/2)\right),
\end{equation}%
for $\beta \in (0,1)$, $\xi_i \in\mathbb{R}$, $i=1,...,d$, and $\{\varphi_{\ell_1,...\ell_d}(.,...,.)\}_{\ell_1,...,\ell_d \in \mathbb{N}^d}$ a family of multivariate Laplace exponent functions.
\end{definition}

We will discuss below what are the general properties that must be fulfilled for the family of functions $\{\varphi_{\ell_1,...\ell_d}(.,...,.)\}_{\ell_1,...,\ell_d \in \mathbb{N}^d}$ to represent an admissible ensemble of Laplace exponents. 

Let us list the three cases that we will adress below.

\textbf{Case 1:} We say that a Grey spherical random field has random global variance if it holds that
\[
\varphi_{\ell_1,...\ell_d}(\kappa_1,...,\kappa_d)=\sum_{i=1}^d \kappa_i,
\]
for all choices of ${\ell_1,...,\ell_d \in \mathbb{N}^d}$.

\textbf{Case 2:} We say that a Grey spherical random field is generated by independent subordinators if it holds that
\[
\varphi_{\ell_1,...\ell_d}(\kappa_1,...,\kappa_d)=\sum_{i=1}^d\varphi_{\ell_i}(\kappa_i),
\]
for all choices of ${\ell_1,...,\ell_d \in \mathbb{N}^d}$. The rationale behind this terminology is going to be explained in the next Section below.

\textbf{Case 3:} We say that a Grey spherical random field is generated by dependent subordinators if for some ${\ell_1,...,\ell_d \in \mathbb{N}^d}$, the Laplace exponent can not be written as the sum of univariate functions.

Clearly a Grey random field with random global variance is a special case of the fields generated by independent subordinators; i.e. Case 1 is mathematically included in Case 2. However, for reasons to be explained in the section, however, it seems to deserve a special mention.

\section{GSRFs with independent subordinators} \label{sec:GreyInd}

As anticipated in the previous Section, by a simple computation it is immediate to see that, if the subordinator vector $\left\{(\mathcal{H}_{1}(t),...,\mathcal{H}_{d}(t))\right\}_{t \geq 0}$ has independent (though not-necessarily identically distributed) components, we have that
\begin{equation}\label{psi2}
    \Psi_\beta(\kappa_1,..,\kappa_d)=E_\beta(-\varphi_1(\kappa_1)-...-\varphi_d(\kappa_d))=\sum_{j=0}^\infty \frac{\left(-\varphi_1(\kappa_1)-...-\varphi_d(\kappa_d)\right)^j}{\Gamma(\beta j+1)}, 
\end{equation}
where $\varphi_i(\cdot)$, $i=1,...,d$, are the Bernstein functions of $\left\{\mathcal{H}_i(t)\right\}_{t\geq 0}$.

We now need a further bit of notation; we shall write $\boldsymbol{\xi}_{\ell,\cdot}:=(\xi_{\ell,-\ell},...,\xi_{\ell,\ell})$ for the $2\ell+1$ vectors of arguments of the characteristic functions that we will deal with. In particular, from the discussion in the previous Section we have the following characterization of Grey random fields with independent subordinators, in terms of their spherical harmonic coefficients:

\begin{definition}\label{defGRFphi}
    
A spherical random field with power spectrum $\left\{ C_{\ell
}\right\} $ is Grey if for any choice of multipoles $\ell_1,...,\ell_d$ the random spherical harmonic coefficients have the following characteristic function:
\begin{equation}\label{defGRFeq}
\Phi _{\boldsymbol{a}_{\ell _{1\cdot}},...,\boldsymbol{a}_{\ell
_{d\cdot}}}(\boldsymbol{\xi}_{\ell_1\cdot},\boldsymbol{\xi}_{\ell_2\cdot},...,\boldsymbol{\xi}_{\ell_d\cdot})=E_\beta\left(-\sum_{i=1}^d\varphi_i(C_{\ell_i}
\Vert\xi_{\ell_i\cdot}\Vert^2/2)\right),
\end{equation}%
for $\beta \in (0,1)$, $\xi_i \in\mathbb{R}$, $i=1,...,d$.
\end{definition}

We note first that one of our goals is immediately achieved, i.e., we have Gaussian random fields as a special case, see the Remark below.

\begin{remark}(The Gaussian special case) When $\beta=1$ and the Laplace exponents of the subordinator components are $\varphi_i(\kappa)=\kappa$, $\kappa \geq 0$ (i.e. $\mathcal{H}_i(t)=t$ a.s., for any $t \geq 0$), for $i=1,...,d$, we obtain the characteristic function of a Gaussian field:

\begin{equation}\label{defGRFeq2}
\Phi _{\boldsymbol{a}_{\ell _{1\cdot}},...,\boldsymbol{a}_{\ell
_{d\cdot}}}(\boldsymbol{\xi}_{\ell_1\cdot},\boldsymbol{\xi}_{\ell_2\cdot},...,\boldsymbol{\xi}_{\ell_d\cdot})=\exp\left(-\frac{1}{2}\sum_{i=1}^dC_{\ell_i}
\Vert\xi_{\ell_i\cdot}\Vert^2)\right).
\end{equation}
This is actually the only case where the spherical harmonic coefficients are independent (consistently with the results in \cite{BM,BMV}).

\end{remark}

\subsection{GSRFs with random global variance}\label{random global variance}

As the first non-Gaussian special case we will consider the situation where Definition \ref{defGRF} holds and the Laplace exponents of the subordinator components are $\varphi_i(\kappa)=\kappa$, $\kappa \geq 0$ (i.e. $\mathcal{H}_i(t)=t$ a.s., for any $t \geq 0$), for $i=1,...,d$, and thus %

\begin{equation}
\Phi _{\boldsymbol{a}_{\ell _{1\cdot}},...,\boldsymbol{a}_{\ell
_{d\cdot}}}(\boldsymbol{\xi}_{\ell_1\cdot},\boldsymbol{\xi}_{\ell_2\cdot},...,\boldsymbol{\xi}_{\ell_d\cdot})=E_\beta\left(-\frac{1}{2}\sum_{i=1}^dC_{\ell_i}
\Vert\xi_{\ell_i\cdot}\Vert^2\right), \qquad 0<\beta <1\text{.}\label{ml3
}
\end{equation}

As mentioned earlier, in this framework we say that we have a GSRF with random global variance. Indeed, here the spherical harmonic coefficients are still identically distributed after normalizing by their
variance; in fact, although they are not independent (because $\beta$ is strictly less than one) they are still exchangeable, in the classical De Finetti sense. In particular, the characteristic function of a single spherical
harmonic coefficient is given by%
\begin{equation*}
\Phi _{a_{\ell m}}(\xi)=E_{\beta }\left( -\frac{1}{2}C_{\ell }\xi^{2}\right) 
\text{ .}
\end{equation*}%
The characteristic function for the field itself is 
\begin{eqnarray*}
\Phi _{\mathcal{T}(x)}(\xi) &=&\mathbb{E}[\exp (i\xi\sum_{\ell m}a_{\ell m}Y_{\ell m}(x))]
\\
&=&E_{\beta }\left( -\frac{1}{2}\xi^{2}\sum_{\ell m}C_{\ell }Y_{\ell
m}^{2}(x)\right) =E_{\beta }\left( -\frac{1}{2}\xi^{2}\sum_{\ell }\frac{2\ell +1}{4\pi }%
C_{\ell }\right) \text{ .}
\end{eqnarray*}%
The joint characteristic function is given by%
\begin{eqnarray*}
\Phi _{\mathcal{T}(x_{1}),...,\mathcal{T}(x_{d})}(\xi_{1},\xi_{2},...,\xi_{d}) &=&\mathbb{E}[\exp
(i\sum_{i=1}^{d}\sum_{\ell m}\xi_{i}a_{\ell m}Y_{\ell m}(x_{i}))] \\
&=&E_{\beta }\left( -\frac{1}{2}\sum_{i=1}^{d}\sum_{j=1}^{d}\xi_{i}\xi_{j}\sum_{%
\ell m}C_{\ell }Y_{\ell m}(x_{i})Y_{\ell m}(x_{j})\right) \\
&=&E_{\beta }\left( -\frac{1}{2}\sum_{i=1}^{d}\sum_{j=1}^{d}\xi_{i}\xi_{j}\sum_{%
\ell }\frac{2\ell +1}{4\pi }C_{\ell }P_{\ell }(\left\langle
x_{i},x_{j}\right\rangle )\right) \text{ .}
\end{eqnarray*}%
In this special case, we can hence characterize these fields as follows:

\begin{itemize}

    \item after normalization, all the spherical harmonic coefficients have the same law

\item this law is the same as the law of the field itself, after normalization,
at any given location

\item the spherical harmonic coefficients are clearly exchangeable at the same
multipole $\ell$, hence have a uniform law on the sphere $\mathbb{S}^{2\ell+1}$; indeed, using results from \cite{BMT25} it is possible to show that all fields with exchangeable coefficients
have this form, for a given completely monotone characteristic function

\item there is no independence across different multipoles, unless the
characteristic function is exponential in the norm (i.e., the Gaussian case).
\end{itemize}



An important question is on the computation of higher-order moments
and cumulants, that is, polyspectra. We note first that all odd-order polyspectra are easily seen to be zero (by symmetry considerations). For the even-order polyspectra, we start from the fourth-order one;  we have that the reduced trispectrum (i.e. the cumulant of the spherical harmonic coefficients) is
going to be given by%
\begin{eqnarray*}
\mathbb{E[}a_{\ell 0}^{4}]-3(\mathbb{E[}a_{\ell 0}^{2}])^{2} &=&\left. \frac{%
d^{4}}{dt^{4}}\sum_{n=0}^{\infty }\frac{(-\frac{C_{\ell }}{2}t^{2})^{n}}{%
\Gamma (\beta n+1)}\right\vert _{t=0}-3(\mathbb{E[}a_{\ell 0}^{2}])^{2} \\
&=&\frac{C_{\ell }^{2}}{4}\times \frac{24}{2\beta \Gamma (2\beta )}-3(%
\mathbb{E[}a_{\ell 0}^{2}])^{2}.
\end{eqnarray*}%
It is interesting to note that, for $\beta \rightarrow 1$, the fourth-cumulant
tends to zero, as expected because in this case the characteristic function
tends to Gaussian. On the other hand, for $\beta \rightarrow 0$, we have that%
\begin{equation*}
\lim_{\beta \rightarrow 0}2\beta \Gamma (2\beta )=1
\end{equation*}%
and the fourth cumulant tends to $3C_{\ell }^{2}.$ As far as the joint
moments are concerned, we have that, for $m_{1}\neq m_{2}$,%
\begin{eqnarray*}
\mathbb{E[}a_{\ell _{1}m_{1}}^{2}a_{\ell _{2}m_{2}}^{2}] &=&\left. \frac{%
\partial ^{4}}{\partial \xi_{1}^{2}\partial \xi_{2}^{2}}\Phi _{a_{\ell
_{1}m_{1}},a_{\ell _{2}m_{2}}}(\xi_{1},\xi_{2})\right\vert _{\xi_{1}=\xi_{2}=0} \\
&=&\left. \frac{\partial ^{4}}{\partial \xi_{1}^{2}\partial \xi_{2}^{2}}E_{\beta
}\left( -\frac{1}{2}C_{\ell _{1}}\xi_{1}^{2}-\frac{1}{2}C_{\ell
_{2}}\xi_{2}^{2}\right) \right\vert _{\xi_{1}=\xi_{2}=0} \\
&=&\left. \frac{\partial ^{4}}{\partial \xi_{1}^{2}\partial \xi_{2}^{2}}%
\sum_{n=0}^{\infty }\frac{\left( -\frac{1}{2}C_{\ell _{1}}\xi_{1}^{2}-\frac{1}{%
2}C_{\ell _{2}}\xi_{2}^{2}\right) ^{n}}{\Gamma (\beta n+1)}\right\vert
_{\xi_{1}=\xi_{2}=0} =\frac{2C_{\ell _{1}}C_{\ell _{2}}}{\Gamma (2\beta +1)}\text{ .}
\end{eqnarray*}%
Up to a deterministic scaling factor, the cross-moment is exactly the same
regardless of the fact that we focus on the same or different multipoles. This is a consequence of the fact that, with this definition, all the spherical harmonics coefficients
are scaled by the same random variable $\mathcal{H}(T_\beta)$. In the next subsections, we move to more general and richer structures, which allow for more complex dependence among different multipoles.

It should be noted that the construction of GSRFs with random global variance is by no means limited to the case of a Mittag-Leffler characteristic functions; it is indeed possible to introduce a number of other cases in explicit form. Not to interrupt the flow of our argument, we delay the discussion of these other possibilities in detail to Section \ref{sec:GGSRF}.

\subsection{GSRFs with random angular power spectrum}\label{sectangul}

In the non-degenerate case, the GSRFs defined in Def. \ref{defGRFphi} possess random scaling factors that can be different, and in general not identically distributed, for each multipole $\ell$. 

To clarify this point, let $\boldsymbol{Z}_\ell:=\left\{ Z_{\ell m}\right\}_{m=-\ell}^\ell$, $\ell=1,...,L$, be independent random vectors with normal distribution $\boldsymbol{Z}_\ell \sim N(\boldsymbol{0},C_\ell\boldsymbol{I}_{2\ell+1})$. By a conditioning argument, it is simple to prove that
\begin{eqnarray*}\Phi _{\boldsymbol{a}_{\ell _{1\cdot}},...,\boldsymbol{a}_{\ell
_{d\cdot}}}(\boldsymbol{\xi}_{\ell_1\cdot},\boldsymbol{\xi}_{\ell_2\cdot},...,\boldsymbol{\xi}_{\ell_d\cdot})&=&E_\beta\left(-\sum_{i=1}^d\varphi_{\ell_i}(C_{\ell_i}\Vert \boldsymbol{\xi}_{\ell_i}\Vert^2/2)\right)=\mathbb{E}e^{i\sum_{i=1}^d\langle\sqrt{\mathcal{H}_{\ell_i}(T_\beta )}\boldsymbol{Z}_{\ell_i},\boldsymbol{\xi}_{\ell_i}\rangle}\notag\\
&=&\mathbb{E}e^{-\frac{1}{2}\sum_{i=1}^d C_{\ell_i}\thinspace \mathcal{H}_{ \ell_i}(T_\beta)\Vert \boldsymbol{\xi}_{\ell_i}\Vert^2},
\end{eqnarray*}%
for $\beta \in (0,1)$, $\boldsymbol{\xi}_{\ell_i} \in\mathbb{R}^{2\ell_i+1}$, $i=1,...,d$.

More explicitly, we can now write the random spherical harmonic coefficients as  \begin{equation}a_{\ell m}:=\sqrt{\mathcal{H}_{\ell}(T_\beta)}Z_{\ell m}, \qquad\ell=0,1,2,...,m=-\ell,...,\ell . \label{defa}\end{equation} In other words, we can view the spherical harmonic coefficients as different mixtures of Gaussian variables, the mixturing variables being dependent and not identically distributed along the various multipoles. This will clearly yield a much more flexible dependence structure, which we investigate below.

In particular, if the set $(\ell_1,...\ell_d)=(1,...,L)$, we have that
\begin{equation*}E_\beta\left(-\sum_{\ell=1}^L\varphi_\ell(C_{\ell}\Vert \boldsymbol{\xi}_\ell\Vert^2/2)\right)=\mathbb{E}e^{i\sum_{\ell=1}^L\langle\sqrt{\mathcal{H}_\ell(T_\beta )}\boldsymbol{Z}_\ell,\boldsymbol{\xi}_\ell\rangle}=\mathbb{E}e^{-\frac{1}{2}\sum_{\ell=1}^LC_\ell\thinspace \mathcal{H}_{ \ell}(T_\beta)\Vert \boldsymbol{\xi}_\ell\Vert^2}.
\end{equation*}
Note that we assume throughout that
\begin{equation}
\sum_{\ell\ge0}(2\ell+1)C_\ell
\mathbb E\left[\mathcal H_\ell(T_\beta)\right]<\infty .\label{sqrint}
\end{equation}
Under this condition, the series
\[
\mathcal{T}(x)=\sum_{\ell\ge0}\sum_{m=-\ell}^{\ell}
\sqrt{\mathcal H_\ell(T_\beta)}\,Z_{\ell m}Y_{\ell m}(x),\qquad x\in\mathbb S^2,
\]
converges in \(L^2(\Omega\times\mathbb S^2)\), and hence defines a
square-integrable Grey spherical random field. Note that, in the stable case, the assumption \eqref{sqrint} is not satisfied due to the lack of finite moments of the subordinator, and then these ccircumstances are treated separately below.

We will now explore the joint moments $\mathbb{E}(a^{r_1}_{\ell_1 m_1} \cdots a^{r_d}_{\ell_d m_d})$, $r_1,...,r_d \in \mathbb{N}$.

\begin{theorem} \label{mixmom}
The mixed moments of the random spherical harmonic coefficients $\mathbb{E}(a^{r_1}_{\ell_1 m_1}\cdots a^{r_2}_{\ell_d m_d})$ are null, for any $m_i \neq m_j$, if for at least one $i=1,...,d$ it is $r_i=2n_i-1$, $n_i \in \mathbb{N}$, and if $\varphi_i^{(j)}:=\frac{d^j}{d\kappa^j}\varphi_i(\kappa)|_{\kappa=0}< \infty$, $j=1,...,n_i$. On the other hand, 
\begin{eqnarray}
&&\mathbb{E[}a_{\ell _{1}m_{1}}^{2n_1}\cdots a_{\ell _{d}m_{d}}^{2n_d}] \label{moments} \\
&=&C^{n_1}_{\ell_1}\cdots C^{n_d}_{\ell_d}\sum_{\boldsymbol{k}^{(1)}}\cdots \sum_{\boldsymbol{k}^{(d)}}A_{n_1,...,n_d}(\boldsymbol{k}^{(1)},...,\boldsymbol{k}^{(d)})
\prod_{j_1=1}^{n_1}\left(- \varphi_{\ell_1}^{(j_1)}(0)\right)^{k^{(1)}_{j_1}}\cdots\prod_{j_d=1}^{n_d}\left(- \varphi_{\ell_d}^{(j_d)}(0)\right)^{k^{(d)}_{j_d}}, \notag 
\end{eqnarray}
where \begin{eqnarray}
A_{n_1,...,n_d}(\boldsymbol{k}^{(1)},...,\boldsymbol{k}^{(d)}):=\frac{(\sum_{j_1=1}^{n_1} k^{(1)}_{j_1}+...+\sum_{j_d=1}^{n_d} k^{(d)}_{j_d})!}{\Gamma(\beta (\sum_{j_1=1}^{n_1} k^{(1)}_{j_1}+...+\sum_{j_d=1}^{n_d} k^{(d)}_{j_d})+1)}\prod_{i=1}^d\frac{(-1/2)^{ n_i} (2n_i)!}{\prod_{j_i=1}^{n_i}k_{j_i}^{(i)}!{j_i}^{k_{j_i}^{(i)}}} \notag
\end{eqnarray}
and by $\sum_{\boldsymbol{k}}$ we denote the sum over all $n$-tuples of non-negative integers $\boldsymbol{k}=(k_1,...,k_n)$, such that $k_1+2k_2+...+nk_n=n$. 
\end{theorem}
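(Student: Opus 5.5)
The plan is to work with the conditionally Gaussian representation \eqref{defa}, $a_{\ell m}=\sqrt{\mathcal H_\ell(T_\beta)}\,Z_{\ell m}$. Here the vectors $\boldsymbol Z_\ell\sim N(\boldsymbol 0,C_\ell \boldsymbol I_{2\ell+1})$ are independent of each other and of the vector $(\mathcal H_{\ell_1}(T_\beta),\dots,\mathcal H_{\ell_d}(T_\beta))$. Conditioning on the random scales gives
\[
\mathbb E\Big[\prod_{i=1}^d a_{\ell_i m_i}^{r_i}\Big]=\mathbb E\Big[\prod_{i=1}^d \mathcal H_{\ell_i}(T_\beta)^{r_i/2}\Big]\prod_{i=1}^d \mathbb E\big[Z_{\ell_i m_i}^{r_i}\big].
\]
This uses that the indices $(\ell_i,m_i)$ are pairwise distinct, so the $Z$'s are independent. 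The factorization is legitimate as soon as the mixing moment on the right is finite, which is where the hypothesis on $\varphi^{(j)}_{\ell_i}(0)$ enters: finiteness of the first $n_i$ derivatives of a Bernstein function at $0$ is equivalent to finiteness of the first $n_i$ moments of the subordinator at unit time. Mixing over $T_\beta$ preserves this, since all moments of $T_\beta$ are finite by \eqref{mom}.

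\textbf{Odd case.} If some $r_i=2n_i-1$, then $\mathbb E[Z^{r_i}_{\ell_i m_i}]=0$ by symmetry of the centred Gaussian. This kills the product, proving the first claim.

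\textbf{Even case.} For even exponents we have $\mathbb E[Z^{2n_i}]=C_{\ell_i}^{n_i}(2n_i)!/(2^{n_i}n_i!)$, so it remains to compute the joint moment $\mathbb E\prod_i \mathcal H_{\ell_i}(T_\beta)^{n_i}$. By Lemma \ref{theocm} and \eqref{psi2}, this joint moment equals
\[
(-1)^{n_1+\dots+n_d}\,\partial^{n_1}_{\kappa_1}\cdots\partial^{n_d}_{\kappa_d}\, E_\beta\Big(-\sum_i\varphi_{\ell_i}(\kappa_i)\Big)\Big|_{\boldsymbol\kappa=0}.
\]
The argument is a sum of functions each depending on a single variable, and $\varphi_{\ell_i}(0)=0$. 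Hence a multivariate Faà di Bruno formula reduces to a product of univariate ones: the mixed derivative equals
\[
\sum_{\boldsymbol k^{(1)},\dots,\boldsymbol k^{(d)}} f^{(K)}(0)\prod_i \frac{n_i!}{\prod_j k^{(i)}_j!\,(j!)^{k^{(i)}_j}}\prod_j\big(\varphi^{(j)}_{\ell_i}(0)\big)^{k^{(i)}_j},
\]
where $f(u)=E_\beta(-u)$ and $K=\sum_{i,j}k^{(i)}_j$. From the series \eqref{ML}, $f^{(K)}(0)=(-1)^K K!/\Gamma(\beta K+1)$, which produces the ratio $K!/\Gamma(\beta K+1)$ in $A_{n_1,\dots,n_d}$.

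\textbf{Alternative route and bookkeeping.} One could instead differentiate the characteristic function \eqref{defGRFeq} directly $2n_i$ times in $\xi_{\ell_i m_i}$ at $0$, treating $\varphi_{\ell_i}(C_{\ell_i}\xi^2/2)$ as a power series in $\xi^2$. This is presumably what generates the factors $(-1/2)^{n_i}(2n_i)!$ in the stated form of $A$. I will carry out whichever computation matches that normalization and check that the two agree in the simple case $d=1$, $n_1=2$, $\varphi(\kappa)=\kappa$, against the fourth moment computed in Section \ref{random global variance}.

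\textbf{Main obstacle.} The main obstacle is this combinatorial bookkeeping: collecting signs and factorials so that they match exactly the constant $A_{n_1,\dots,n_d}$ as written, including the $j^{k_j}$ versus $(j!)^{k_j}$ convention. A secondary point is justifying differentiation at the boundary point $\kappa=0$ of $\mathbb R_+^d$. I will handle it by using one-sided derivatives together with monotone convergence for the moments of the mixture.
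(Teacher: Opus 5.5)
Your route is essentially the paper's: condition on the scales, use Gaussian moments, and apply Fa\`a di Bruno to a function of $\sum_i\varphi_{\ell_i}(\kappa_i)$. The paper expands $e^{-t\sum_i\varphi_{\ell_i}(\kappa_i)}$ at fixed $t$ and then integrates the resulting polynomial in $t$ against $M_\beta$ via \eqref{mom}. You instead take the outer function $f(u)=E_\beta(-u)$ directly. The two coincide, since $f^{(K)}(0)=(-1)^K\,\mathbb E T_\beta^K$.

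The gap is the step you defer as the main obstacle. You cannot make the bookkeeping ``match exactly the constant $A_{n_1,\dots,n_d}$ as written'', because that constant is wrong. Finish your own computation with $K=\sum_{i,j}k^{(i)}_j$ and $N=\sum_i n_i$, using $(2n_i-1)!!\,n_i!=(2n_i)!/2^{n_i}$ and $(-1)^{N+K}\prod_{i,j}\bigl(\varphi^{(j)}_{\ell_i}(0)\bigr)^{k^{(i)}_j}=(-1)^{N}\prod_{i,j}\bigl(-\varphi^{(j)}_{\ell_i}(0)\bigr)^{k^{(i)}_j}$. This gives
\[
\mathbb E\Bigl[\prod_{i=1}^d a^{2n_i}_{\ell_i m_i}\Bigr]
=\prod_{i=1}^d C^{n_i}_{\ell_i}\sum_{\boldsymbol k^{(1)}}\cdots\sum_{\boldsymbol k^{(d)}}
\frac{K!}{\Gamma(\beta K+1)}
\prod_{i=1}^d\frac{(-1/2)^{n_i}(2n_i)!}{\prod_{j=1}^{n_i}k^{(i)}_j!\,(j!)^{k^{(i)}_j}}
\prod_{i=1}^d\prod_{j=1}^{n_i}\bigl(-\varphi^{(j)}_{\ell_i}(0)\bigr)^{k^{(i)}_j},
\]
that is, the stated formula with $(j!)^{k_j}$ in place of $j^{k_j}$. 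The factors $(-1/2)^{n_i}(2n_i)!$ already fall out of your route, so the detour through the characteristic function is unnecessary. No correct computation can reproduce $j^{k_j}$ with $\varphi^{(j)}(0)$ as defined, since the stated version gives wrong values (see below). Your display is exactly the paper's last display before it asserts \eqref{moments}; the passage to $j^{k_j}$ there is a transcription slip.

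Since $j!=j$ for $j\le 2$, the two versions differ only when some $n_i\ge 3$ and $\varphi^{(j)}_{\ell_i}(0)\neq 0$ for some $j\ge 3$. Your planned check ($d=1$, $n_1=2$, $\varphi(\kappa)=\kappa$) is therefore blind to the issue. It is doubly blind, because a linear $\varphi$ kills every $\boldsymbol k$ with $k_j>0$ for some $j\ge2$.

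A discriminating check is $d=1$, $n_1=3$, with the Gamma exponent $\varphi(\kappa)=\gamma\log(1+\kappa/b)$, so that $\varphi'''(0)=2\gamma/b^3$. The $\boldsymbol k=(0,0,1)$ term of $\mathbb E[a^6_{\ell m}]$ equals $30\gamma C_\ell^3/(b^3\Gamma(\beta+1))$ with $(3!)^1$. This agrees with \eqref{stir}, where $\mathbb E\mathcal H(t)^3=(\gamma^3t^3+3\gamma^2t^2+2\gamma t)/b^3$. With $3^1$ the same term equals $60\gamma C_\ell^3/(b^3\Gamma(\beta+1))$. So you should state and prove the result with $(j!)^{k^{(i)}_j}$; equivalently, keep $j^{k_j}$ and replace $\varphi^{(j)}(0)$ by $\varphi^{(j)}(0)/(j-1)!$.

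A smaller point concerns the odd case. Finiteness of $\mathbb E\prod_i\mathcal H_{\ell_i}(T_\beta)^{r_i/2}$ is not secured by a condition at the odd index alone. You need, for example, $\varphi^{(j)}_{\ell_i}(0)$ finite for $j\le\lceil r_i/2\rceil$ at every $i$. That suffices for two reasons: the $\mathcal H_{\ell_i}(T_\beta)$ are conditionally independent given $T_\beta$, with moments bounded by polynomials in $t$; and $T_\beta$ has all moments by \eqref{mom}.
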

\begin{proof}
In the even-order case, we can write that
\begin{eqnarray*}
 &&\mathbb{E[}a_{\ell _{1}m_{1}}^{2n_1}\cdots a_{\ell _{d}m_{d}}^{2n_d}]=\mathbb{E}\left[\mathcal{H}^{n_1}_{\ell_1}(T_\beta)\cdots \mathcal{H}^{n_d}_{\ell_d}(T_\beta)Z^{2n_1}_{\ell_1 m_1}\cdots Z^{2n_d}_{\ell_d m_d}\right]\notag \\
&=&(2n_1-1)!!\cdots (2n_d-1)!!C^{n_1}_{\ell_1}\cdots C^{n_d}_{\ell_d}\mathbb{E}\left[\mathcal{H}^{n_1}_{\ell_1}(T_\beta)\cdots \mathcal{H}^{n_d}_{\ell_d}(T_\beta)\right] \notag \\
&=&(2n_1-1)!!\cdots (2n_d-1)!!C^{n_1}_{\ell_1}\cdots C^{n_d}_{\ell_d}\int_0^\infty\mathbb{E}\mathcal{H}^{n_1}_{\ell_1}(t)\cdots \mathbb{E}\mathcal{H}^{n_d}_{\ell_d}(t)M_\beta(t)dt.
\end{eqnarray*}

The mixed moments of the subordinators can be evaluated, by the Fa\'a di Bruno formula, for any $t \geq 0,$ $n_i \in \mathbb{N}$, $i=1,...,d$:
     \begin{eqnarray}&&\mathbb{E}\left(\mathcal{H}_1^{n_1}(t)\cdots\mathcal{H}_d^{n_d}(t)\right)\notag\\
     &=&(-1)^{\sum_{i=1}^{d}n_i} \frac{\partial^{n_1+...+n_d}}{\partial\kappa_1^{n_1}\cdots\partial\kappa_d^{n_d}}e^{-t\sum_{i=1}^d\varphi_i(\kappa_i) }\bigg|_{\boldsymbol{\kappa}=\boldsymbol{0}} \label{momSub2} \\
     &=&(-1)^{\sum_{i=1}^{d}n_i}  e^{-t\sum_{i=1}^d\varphi_i(\kappa_i)}\prod_{i=1}^d\sum_{\boldsymbol{k}^{(i)} }
    \frac{n_i! }{k^{(i)}_1!\cdots k^{(i)}_{n_i}!
    }\prod_{j_i=1}^{n_i}\left( \frac{(-t)\varphi^{(j_i)}(\kappa_i)}{j_i!}\right)^{k_{j_i}}\bigg|_{\boldsymbol{\kappa}=\boldsymbol{0}} \notag \\
     &=&\prod_{i=1}^d \sum_{\boldsymbol{k}^{(i)}} \frac{n_i! }{k^{(i)}_1!\cdots k^{(i)}_{n_i}!
    }\prod_{j_i=1}^{n_i}\left( \frac{(-1)^{j_i-1}\varphi^{(j_i)}(0)}{j_i!}\right)^{k^{(i)}_{j_i}}t^{\sum_{j_i=1}^{n_i} k^{(i)}_{j_i}}. \notag 
     \end{eqnarray}
     The last step follows by recalling that $\sum_{j=1}^njk_j=n$ and clearly shows that the moments of any even order are non-negative, since $\varphi_i(\cdot)$ are a Bernstein functions.

      Therefore we get, by considering \eqref{mom},
      \begin{eqnarray}
&&\mathbb{E[}a_{\ell _{1}m_{1}}^{2n_1}\cdots a_{\ell _{d}m_{d}}^{2n_d}] \notag \\
&=&\prod_{i=1}^d\left[C^{n_i}_{\ell_i}\frac{(-1)^{n_i} (2n_i)!}{2^{n_i}}\right]
\sum_{\boldsymbol{k}^{(1)}}\cdots \sum_{\boldsymbol{k}^{(d)}}
\frac{( \sum_{j_1=1}^{n_1} k^{(1)}_{j_1}+...+\sum_{j_d=1}^{n_d} k^{(d)}_{j_d})!}{\Gamma(\beta ( \sum_{j_1=1}^{n_1} k^{(1)}_{j_1}+...+\sum_{j_d=1}^{n_d} k^{(d)}_{j_d})+1)}  \times \notag \\
&\times&
\prod_{j_1=1}^{n_1}\left[\frac{1}{k^{(1)}_{j_1}!}\left(- \frac{\varphi_{\ell_1}^{(j_1)}(0)}{j_1!}\right)^{k^{(1)}_{j_1}}\right]\cdots \prod_{j_d=1}^{n_d}\left[\frac{1}{k^{(d)}_{j_d}!}\left(- \frac{\varphi_{\ell_d}^{(j_d)}(0)}{j_d!}\right)^{k^{(d)}_{j_d}}\right], \notag 
\end{eqnarray}
so that \eqref{moments} follows.

In the odd-order case (i.e. for at least one of the orders equal to $r_i=2n_i-1)$, it is enough to recall \eqref{defa} and that the r.v.'s $Z_{\ell_i m_i}$ are centered Gaussian (for any $i$), so that the conclusion holds, by considering again \eqref{momSub2} and that the condition $\varphi_{\ell_i}^{(j)}< \infty,$ for $j=1,...n_i$, is enough to ensure that $\mathbb{E}(\mathcal{H}^{r_i/2}_{\ell_i}(T_\beta))< \infty$.

\end{proof}

\bigskip

We will show now that, while in the global variance case non-Gaussianity persists at all frequencies, in the random angular power spectrum case, considered here, high-frequency limits depend on the asymptotic behaviour of the $\ell$-th subordinator component, i.e. $\mathcal H_\ell(T_\beta)$.
\begin{theorem}\label{theoconv}
Let, for any $\ell$, $a_{\ell m}=\sqrt{\mathcal H_\ell(T_\beta)}\,Z_{\ell m}$,
for $Z_{\ell m}\sim N(0,C_\ell),$
 independent of \(\mathcal H_\ell(T_\beta)\). Assume that, for some
positive sequence \(\{r_\ell\}_{\ell}\) and for the Laplace exponent $\varphi_\ell(\cdot)$ of the $\ell$-th subordinator, the following convergence holds
\begin{equation}
\lim_{\ell \to\infty}\varphi_\ell(s/r_\ell)=\psi(s), \qquad s >0,\label{convfi}
\end{equation}
for a Bernstein function $\psi:\mathbb{R}_+\to\mathbb R_+.$
 Then
\begin{equation}
    \frac{a_{\ell m}}{\sqrt{C_\ell r_\ell}}
\overset{d}{\Longrightarrow}
\sqrt V\,N, \qquad \ell \to \infty,
\label{convfi2}
\end{equation}
where $\overset{d}{\Longrightarrow}$ denotes convergence in distribution, \(V\) is a non-negative random variable, independent of \(N\sim N(0,1)\), with $\mathbb Ee^{-sV}=E_\beta(-\psi(s))$.
\end{theorem}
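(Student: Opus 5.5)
The plan is to work entirely with characteristic functions and use Lévy's continuity theorem. Conditioning on the mixing variable $\mathcal H_\ell(T_\beta)$, which is independent of $Z_{\ell m}\sim N(0,C_\ell)$, gives for every $\xi\in\mathbb R$
\[
\mathbb E\exp\Big(i\xi \frac{a_{\ell m}}{\sqrt{C_\ell r_\ell}}\Big)
=\mathbb E\exp\Big(-\frac{\xi^2}{2 r_\ell}\mathcal H_\ell(T_\beta)\Big)
=E_\beta\big(-\varphi_\ell(\xi^2/(2r_\ell))\big).
\]
The last equality is Lemma \ref{theocm} in dimension one, i.e. the single-coefficient marginal of \eqref{defGRFeq}. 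The normalisation by $\sqrt{C_\ell r_\ell}$ is exactly what turns the Laplace argument $C_\ell \xi^2/2$ into $s/r_\ell$ with $s=\xi^2/2$.

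Next I pass to the limit. Hypothesis \eqref{convfi} with $s=\xi^2/2>0$ gives $\varphi_\ell(\xi^2/(2r_\ell))\to\psi(\xi^2/2)$. The map $x\mapsto E_\beta(-x)$ is continuous on $[0,\infty)$: it is an entire function restricted to the half line, or equivalently the Laplace transform of the density $M_\beta$, where dominated convergence applies. Hence the characteristic functions converge pointwise to $E_\beta(-\psi(\xi^2/2))$ for $\xi\neq 0$. At $\xi=0$ every function in the sequence equals $1$, and the limit is also $1$ provided $\psi(0)=0$.

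It then remains to identify the limit as the characteristic function of $\sqrt V N$. Since $\psi$ is a Bernstein function, it is the Laplace exponent of some subordinator $\mathcal G$. By Lemma \ref{theocm} (one-dimensional case), $E_\beta(-\psi(s))=\mathbb E e^{-s\mathcal G(T_\beta)}$, so $V:=\mathcal G(T_\beta)$ exists as a nonnegative random variable with the required Laplace transform. Taking $N\sim N(0,1)$ independent of $V$ and conditioning again gives $\mathbb E e^{i\xi\sqrt V N}=\mathbb E e^{-\xi^2V/2}=E_\beta(-\psi(\xi^2/2))$.

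The \textbf{main obstacle} is continuity of the limit at $\xi=0$, which Lévy's theorem needs (tightness alone would not suffice). If $\psi(0+)=\psi(0)=0$, this is automatic. In the Bernstein representation the killing term $a=\psi(0)$ may a priori be positive, and then $V$ would be defective. I would handle this by noting that each $\varphi_\ell$ has no killing term (Definition \ref{defsub}), so $\psi$ is a pointwise limit of functions vanishing at $0$, and by restricting to, or assuming, $\lim_{s\downarrow0}\psi(s)=0$. Under that condition the limit characteristic function is continuous at $0$, and Lévy's continuity theorem yields \eqref{convfi2}.
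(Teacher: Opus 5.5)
Your argument is correct, but you take the limit at a different level than the paper does.

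The paper works with the mixing variables first. It proves $\mathcal H_\ell(T_\beta)/r_\ell \Longrightarrow V$, using complete monotonicity of $E_\beta(-\psi(\cdot))$ (a composition of a completely monotone and a Bernstein function) and the continuity theorem for Laplace transforms. It then transfers this to the coefficients through joint convergence with the independent $N_{\ell m}$ and the continuous map $(x,y)\mapsto \sqrt{x}\,y$.

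You instead use the fact that the characteristic function of $a_{\ell m}/\sqrt{C_\ell r_\ell}$ at $\xi$ is exactly the Laplace transform of $\mathcal H_\ell(T_\beta)/r_\ell$ at $\xi^2/2$. One application of L\'evy's continuity theorem then suffices, and you exhibit the limit concretely as $V=\mathcal G(T_\beta)$ via Lemma \ref{theocm}. The paper's route gives a stronger intermediate statement on the mixing variables, which is handy for joint limits of several coefficients sharing the same $\mathcal H_\ell(T_\beta)$. Yours is shorter and makes the law of $V$ explicit.

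On the point you flag, both routes need $\psi(0+)=0$. In the paper's route, the Laplace continuity theorem otherwise yields only vague convergence to a defective law. This condition does not follow from $\varphi_\ell(0)=0$ alone. For example, $\varphi_\ell(\kappa)=1-e^{-\kappa r_\ell^2}$ with $r_\ell\to\infty$ gives $\psi\equiv1$ on $(0,\infty)$, and the normalized coefficients are then not tight. So it must be read into the hypothesis, as you do. It is automatic if ``Bernstein function'' means a Laplace exponent without killing, as in Definition \ref{defsub}. It is also implicit in the statement's requirement that $V$ be a proper random variable with $\mathbb E e^{-sV}=E_\beta(-\psi(s))$.
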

\begin{proof}
As a consequence of the assumption \eqref{convfi}, we have that
\begin{equation}\lim_{\ell \to\infty}E_\beta(-\varphi_\ell(s/r_\ell))=:E_\beta(-\psi(s)), \qquad s>0.\label{convfi3}\end{equation}
The function $E_\beta(-\psi(\cdot))$ is the composition of a completely monotone and a Bernstein function and then it is itself completely monotone (see \cite{SCH}, Theorem 3.7, p.27).
Then, by the continuity theorem for Laplace transforms, the convergence \eqref{convfi3}
is enough to get
\[
\frac{\mathcal H_\ell(T_\beta)}{r_\ell}\overset{d}{\Longrightarrow} V,
\qquad \ell\to\infty.
\]
For fixed \(\ell\) and \(m\), by the representation of the spherical harmonic
coefficients, we have that
\[
\frac{a_{\ell m}}{\sqrt{C_\ell r_\ell}}
=
\sqrt{\frac{\mathcal H_\ell(T_\beta)}{r_\ell}}\,N_{\ell m},
\]
where $N_{\ell m}\sim N(0,1)$, for every \(\ell\). Since \(N_{\ell m}\) is independent of
\(\mathcal H_\ell(T_\beta)\), we also have that
\[
\left(
\frac{\mathcal H_\ell(T_\beta)}{r_\ell},
N_{\ell m}
\right)
\overset{d}{\Longrightarrow} 
(V,N),\qquad \ell \to \infty,
\]
where \(N\) is independent of \(V\).
By the continuous mapping theorem, applied to the map
$(x,y)\mapsto \sqrt{x}\,y,$
for $x\ge0,\ y\in\mathbb R,$
we obtain
\[
\sqrt{\frac{\mathcal H_\ell(T_\beta)}{r_\ell}}\,N_{\ell m}
\overset{d}{\Longrightarrow} 
\sqrt V\,N,\qquad \ell \to \infty
\]
which gives the final result.
\end{proof}

\bigskip

We describe in Appendix A some relevant special cases of the GSRF, obtained by specifying the multivariate subordinator with independent components: the stable case with variable order, as well as the compound Poisson, and the Gamma cases. In these particular settings, it is possible to derive explicit forms of the results given in Theorems \ref{mixmom} and \ref{theoconv}.

\section{GSRFs with dependent subordinators}\label{secdepsub}
If we remove the assumption of independence among subordinator's components, i.e. the Laplace exponent of the multivariate subordinator $\Psi_\beta(\cdot,...,\cdot)$ is in the general form given in Def. \ref{defsub}, 
then, instead of Def. \ref{defGRFeq}, we have

\begin{definition}\label{defgen}
A spherical random field with power spectrum $\left\{ C_{\ell
}\right\} $ is Grey if, for any $d \in \mathbb N$ and any choice of multipoles $\ell_1,...,\ell_d$, we have that%
\begin{equation}
\Phi _{\boldsymbol{a}_{\ell _{1\cdot}},...,\boldsymbol{a}_{\ell
_{d\cdot}}}(\boldsymbol{\xi}_{\ell_1\cdot},\boldsymbol{\xi}_{\ell_2\cdot},...,\boldsymbol{\xi}_{\ell_d\cdot})=E_\beta\left(-\varphi_{\ell_1,...,\ell_d}(C_{\ell_1}
\Vert\xi_{\ell_1\cdot}\Vert^2/2,...,C_{\ell_d}
\Vert\xi_{\ell_d\cdot}\Vert^2/2)\right),
\end{equation}%
for $\beta \in (0,1)$, $\xi_i \in\mathbb{R}$, $i=1,...,d$.
\end{definition}

The representation
of the random field as \begin{equation} \mathcal T(x) =\sum_{\ell\geq 0} \sum_{m=-\ell}^{\ell} a_{\ell,m}Y_{\ell m}(x)= \sum_{\ell\geq 0} \sum_{m=-\ell}^{\ell} \sqrt{ \mathcal H_\ell(T_\beta) }\, Z_{\ell m}Y_{\ell m}(x),\qquad x\in\mathbb S^2,\notag \end{equation}
where now \(\mathcal H_\ell\) is the $\ell$-th component of a general multivariate subordinator, admits a natural microscopic interpretation.

The quantities \(\{\mathcal H_\ell(T_\beta)\}_{\ell\ge0}\) can be viewed as
random energy levels associated with the different harmonic scales of the
field. In many physical systems, such energies arise as macroscopic limits of
the cumulative effect of a large number of small random transfers occurring
within each spectral band.

More precisely, for each multipole \(\ell\), consider a sequence of cumulative jump processes
\(\{S_\ell^{(n)}(t)\}_{t\ge0}\), describing the random accumulation of energy within the corresponding spectral band. The jumps are assumed to belong to the domain of attraction of the \(\ell\)-th component of a multivariate subordinator
\(\{\mathcal H_\ell(t)\}_{t\ge0}\), so that, under a suitable scaling, the processes
\(S_\ell^{(n)}\) converge to \(\mathcal H_\ell\). The L\'evy measure of the limiting multivariate subordinator encodes the statistics of the microscopic energy-transfer mechanism as well as the dependence structure among different multipoles.

The occurrence of energy-transfer events is governed by a renewal process with heavy-tailed interarrival times. Consequently, the associated counting process converges, after rescaling, to the inverse of a \(\beta\)-stable subordinator. Therefore, the random operational time \( T_\beta\), whose distribution is the Mittag-Leffler law, arises naturally as the scaling limit of the underlying renewal dynamics.

In this framework, as we will see in the following subsection, Grey spherical random fields can be interpreted as macroscopic scaling limits of random spectral energy cascades evolving on multiple harmonic scales, where the non-Gaussian behaviour results from the combined effect of the multivariate subordinator governing the energy transfers and the anomalous temporal dynamics generated by heavy-tailed waiting times.

\subsection{The Grey spherical random fields as scaling limits of CTRW's}

In the following theorem we will say that, for any $n \geq 1$ and for every finite collection
\(\ell_1,\ldots,\ell_d\), the sequence of i.i.d. random vectors in \(\mathbb R_+^d\)
\begin{equation}
\boldsymbol J_j^{(n)}
=
\bigl(
J_{\ell_1,j}^{(n)},\ldots,
J_{\ell_d,j}^{(n)}
\bigr),
\qquad j\ge1,\label{trar}
\end{equation}
belongs to the domain of attraction of
\(
\boldsymbol{\mathcal{H}}(1)
=
\bigl(
\mathcal H_{\ell_1}(1),\ldots,
\mathcal H_{\ell_d}(1)
\bigr)
,\)
where 
$\{\boldsymbol{\mathcal{H}}(t)\}_{t \geq 0}$ is the multivariate subordinator given in Definition \ref{defsub} and we will write $\boldsymbol J^{(n)}\in\mathcal H$-DOA, if the following conditions hold:
\begin{equation}
n\,\mathbb P(\boldsymbol J_1^{(n)}\in\cdot)
\overset{v}{\Longrightarrow}
\nu_{\ell_1,\ldots,\ell_d}(\cdot), \qquad n \to \infty,\label{cond1}
\end{equation}
on
\(\mathbb R_+^d\setminus\{\boldsymbol0\}\), where
\(\nu_{\ell_1,\ldots,\ell_d}\) is the Lévy measure of
\((\mathcal H_{\ell_1}(1),\ldots,\mathcal H_{\ell_d}(1))\), and

\begin{equation}
\lim_{\varepsilon\downarrow0}
\lim_{n\to\infty}
n\,
\mathbb E
\!\left[
\boldsymbol J_1^{(n)}
\boldsymbol1_{\{\|\boldsymbol J_1^{(n)}\|\le\varepsilon\}}
\right]
=
\boldsymbol b_{\ell_1,\ldots,\ell_d},\label{cond2}
\end{equation}
where
\(\boldsymbol b_{\ell_1,\ldots,\ell_d}\)
is the drift vector of the subordinator.

The notation
\(\overset{v}{\Longrightarrow}\)
denotes vague convergence of Radon measures, namely,
\[
\mu_n
\overset{v}{\Longrightarrow}
\mu
\]
if and only if
\[
\int_{\mathbb R_+^d\setminus\{\boldsymbol0\}}
f(x)\,\mu_n(dx)
\longrightarrow
\int_{\mathbb R_+^d\setminus\{\boldsymbol0\}}
f(x)\,\mu(dx)
\]
for every
\(f\in C_c(\mathbb R_+^d\setminus\{\boldsymbol0\})\),
where
\(C_c(\mathbb R_+^d\setminus\{\boldsymbol0\})\)
denotes the space of continuous functions with compact support. Moreover, note that the conditions \eqref{cond1} and \eqref{cond2} are the specialization to multivariate subordinators of the classical convergence criteria for infinitesimal triangular arrays towards infinitely divisible laws; see, e.g., \cite{KAL}, ch. 5, and \cite{JAC}, ch. VII, for the corresponding distributional and functional convergence results.

Let us also consider the fractional Poisson process $\{N_\beta(t)\}_{t \geq 0}$, i.e. a renewal process with (i.i.d.) power-law inter-arrival times $U_j$ with $\mathbb P(U_j>t)=E_\beta(-t^\beta)$, for $\beta \in (0,1)$ and $j=1,2...$. Then, it is well-known that the following convergence holds \begin{equation}\{c^{-\beta}N_\beta(\lfloor ct \rfloor)\}_{t \geq 0} \overset{J_1}{\Longrightarrow}\{\mathcal L_\beta(t)\}_{t \geq 0}, \qquad c \to \infty,\label{nbeta}\end{equation} in $D([0,\infty))$ with the $J_1$ topology (see \cite{MEE}, p.104), where $\{\mathcal{L}_\beta(t)\}_{t \geq 0}$ is the inverse of the $\beta$-stable subordinator.  Note that \(T_\beta\overset{\text{d}}{=}\mathcal L_\beta(1) \), since $\mathbb P(\mathcal{L}_\beta(t) \in dx)/dx=t^{-\beta}M_\beta(x t^{-\beta})$ and $M_\beta(\cdot)$ is the function given in \eqref{mai}.

We start by proving the convergence, in the sense of f.d.d.'s, to the truncated GSRF defined as
\begin{equation} \mathcal T_L(x): = \sum_{\ell= 0}^L \sum_{m=-\ell}^{\ell} \sqrt{ \mathcal H_\ell(T_\beta) }\, Z_{\ell m}Y_{\ell m}(x),\qquad x\in\mathbb S^2,\label{TL} \end{equation}
for every fixed $L$.
\begin{theorem}\label{teoconv}
Let \(\{Z_{\ell m}\}\) be independent Gaussian random variables with
\(Z_{\ell m}\sim N(0,C_\ell)\), 
let
$\boldsymbol J^{(n)}\in\mathcal H$-DOA, and let, for every \(n\ge1\), \(\ell \in\mathbb R_+\) and \(t\ge0\),
\[
S^{(n)}_\ell(t)
:=
\sum_{j=1}^{N_\beta({\lfloor n^{1/\beta}t\rfloor)}}J_{\ell,j}^{(n)}, 
\]
where, by assumption, $\boldsymbol J^{(n)}$ is independent of the process $N_\beta$ and of \(\{Z_{\ell m}\}\), for any $n \in \mathbb{N}.$
Then, for every fixed \(L\), \begin{equation} \mathcal{T}_L^{(n)}(x): = \sum_{\ell=0}^{L} \sum_{m=-\ell}^{\ell} \sqrt{ S_\ell^{(n)}(1) }\, Z_{\ell m}Y_{\ell m}(x) \Longrightarrow\mathcal{T}_L(x),\qquad n \to \infty,\label{trunca} \end{equation}
where $\Longrightarrow$ denotes weak convergence of probability measures in \(L^2(\mathbb S^2)\).
\end{theorem}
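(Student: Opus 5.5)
The plan is to reduce the statement to the finite-dimensional convergence in distribution of the random vector $\bigl(S^{(n)}_0(1),\ldots,S^{(n)}_L(1)\bigr)$ to $\bigl(\mathcal H_0(T_\beta),\ldots,\mathcal H_L(T_\beta)\bigr)$. Once this is available, the rest is a continuous mapping argument. The map
\[
(h_0,\ldots,h_L,\{z_{\ell m}\})\mapsto \sum_{\ell=0}^L\sum_{m=-\ell}^{\ell}\sqrt{h_\ell}\,z_{\ell m}Y_{\ell m}(\cdot)
\]
is continuous from $\mathbb R_+^{L+1}\times\mathbb R^{(L+1)^2}$ into $L^2(\mathbb S^2)$, because it takes values in the finite-dimensional subspace spanned by $\{Y_{\ell m}\}_{\ell\le L}$ and $\Vert\sum c_{\ell m}Y_{\ell m}\Vert_{L^2}^2=\sum c_{\ell m}^2$. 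By assumption the Gaussian vector $\{Z_{\ell m}\}$ is independent of $(S^{(n)}_\ell(1))_\ell$ for each $n$. Hence joint convergence of the pair follows from the marginal convergence, as in the proof of Theorem \ref{theoconv}, and weak convergence in $L^2(\mathbb S^2)$ is then immediate.

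For the core step I would use Laplace transforms and a conditioning argument. Fix $\boldsymbol\kappa\in\mathbb R_+^{L+1}$ and set $\boldsymbol J^{(n)}_j=(J^{(n)}_{0,j},\ldots,J^{(n)}_{L,j})$. Since the $\boldsymbol J^{(n)}_j$ are i.i.d. and independent of $N_\beta$,
\[
\mathbb E e^{-\langle\boldsymbol\kappa,\boldsymbol S^{(n)}(1)\rangle}=\mathbb E\Bigl[\bigl(\phi_n(\boldsymbol\kappa)\bigr)^{N_\beta(\lfloor n^{1/\beta}\rfloor)}\Bigr],\qquad \phi_n(\boldsymbol\kappa):=\mathbb E e^{-\langle\boldsymbol\kappa,\boldsymbol J^{(n)}_1\rangle}.
\]
The conditions \eqref{cond1}--\eqref{cond2} are the standard triangular-array criteria (\cite{KAL}, ch. 5), and they give
\[
n\bigl(1-\phi_n(\boldsymbol\kappa)\bigr)\to\varphi_{0,\ldots,L}(\boldsymbol\kappa).
\]
To see this, split $\int(1-e^{-\langle\boldsymbol\kappa,x\rangle})\,n\mathbb P(\boldsymbol J^{(n)}_1\in dx)$ into the regions $\{\Vert x\Vert>\varepsilon\}$ and $\{\Vert x\Vert\le\varepsilon\}$. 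On the first region, vague convergence applies after a routine argument at the boundary $\Vert x\Vert=\varepsilon$ and at infinity, using tightness of $n\mathbb P(\Vert\boldsymbol J_1^{(n)}\Vert>M)$. On the second region, linearize $1-e^{-u}=u+O(u^2)$, so that the linear term produces $\langle\boldsymbol b,\boldsymbol\kappa\rangle$ via \eqref{cond2}. Consequently $\phi_n^{\,n x}\to e^{-x\varphi(\boldsymbol\kappa)}$ locally uniformly in $x\ge0$.

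Next write $N_\beta(\lfloor n^{1/\beta}\rfloor)=n\cdot\bigl(n^{-1}N_\beta(\lfloor n^{1/\beta}\rfloor)\bigr)$. Taking $c=n^{1/\beta}$ in \eqref{nbeta} gives $n^{-1}N_\beta(\lfloor n^{1/\beta}\rfloor)\Rightarrow\mathcal L_\beta(1)\overset{d}{=}T_\beta$. Since the functions $x\mapsto\phi_n^{nx}$ lie in $[0,1]$ and converge locally uniformly to $e^{-x\varphi(\boldsymbol\kappa)}$, an extended continuous mapping argument yields
\[
\mathbb E e^{-\langle\boldsymbol\kappa,\boldsymbol S^{(n)}(1)\rangle}\to\int_0^\infty e^{-t\varphi(\boldsymbol\kappa)}M_\beta(t)\,dt=E_\beta(-\varphi(\boldsymbol\kappa)).
\]
This uses a Skorokhod representation, or Theorem 5.5 of Billingsley, and bounded convergence. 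By Lemma \ref{theocm} the limit is the Laplace transform of $\boldsymbol{\mathcal H}(T_\beta)$, and the multivariate continuity theorem for Laplace transforms of $\mathbb R_+^{L+1}$-valued vectors concludes the core step.

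I expect the main obstacle to be the rigorous derivation of $n(1-\phi_n)\to\varphi$ from the vague-convergence conditions. In particular, one needs some uniform control of mass near infinity and near $\varepsilon$-spheres, since vague convergence on $\mathbb R_+^d\setminus\{0\}$ does not directly control $n\mathbb P(\Vert\boldsymbol J\Vert>M)$. I would handle this by interpreting \eqref{cond1} in the usual sense of the cited references, namely convergence on sets bounded away from $0$ including neighbourhoods of infinity, and by choosing $\varepsilon$ outside the countable set of radii charged by $\nu$. A second, minor point is that $N_\beta$ is evaluated at an integer time, which is harmless because \eqref{nbeta} is already stated with $\lfloor ct\rfloor$.
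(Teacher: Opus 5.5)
Your proposal is correct, but its core step follows a different route from the paper. The paper works at the level of processes. It invokes the functional triangular-array theorem to get $\boldsymbol R^{(n)}(\cdot)=\sum_{j\le[n\cdot]}\boldsymbol J^{(n)}_j\Rightarrow\boldsymbol{\mathcal H}(\cdot)$ in $D([0,\infty),\mathbb R^d_+)$ with the $J_1$ topology, and combines this, by independence, with \eqref{nbeta}. It then writes $\boldsymbol S^{(n)}(\cdot)=\boldsymbol R^{(n)}\bigl(n^{-1}N_\beta(\lfloor n^{1/\beta}\cdot\rfloor)\bigr)$ and applies the continuous mapping theorem for random time changes to obtain $\boldsymbol S^{(n)}(\cdot)\Rightarrow\boldsymbol{\mathcal H}(\mathcal L_\beta(\cdot))$. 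Only then does it evaluate at $t=1$, and it reaches the $L^2(\mathbb S^2)$ statement through f.d.d.\ convergence of the field at points.

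You work only at $t=1$. You identify the limiting Laplace transform by conditioning on $N_\beta$, using the finite-dimensional computation $n(1-\phi_n)\to\varphi$, the one-dimensional limit $n^{-1}N_\beta(\lfloor n^{1/\beta}\rfloor)\Rightarrow T_\beta$, and Lemma \ref{theocm}.

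The paper's route yields the stronger process-level limit, which fits naturally with Section \ref{sec:space-time}. However, it relies on heavy machinery, and the $J_1$-continuity of composition at the limit needs an extra argument. The usual sufficient conditions (outer path continuous, or inner path continuous and strictly increasing) both fail here, since $\boldsymbol{\mathcal H}$ jumps and $\mathcal L_\beta$ has flat stretches. Your route is elementary and proves exactly what is claimed. Your last step, a continuous map into the finite-dimensional span of $\{Y_{\ell m}\}_{\ell\le L}$, also justifies the $L^2$ convergence more cleanly than the paper's ``follows immediately''.

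Your caveat about \eqref{cond1} is justified, not pedantic. With vague convergence defined literally through $C_c(\mathbb R^d_+\setminus\{\boldsymbol 0\})$, mass may escape to infinity. For example, with $d=1$, take $J^{(n)}_1=n$ with probability $1/n$ and $0$ otherwise. Then $\nu=0$ and $b=0$, yet $\mathbb P(S^{(n)}(1)\ge n)\to 1-E_\beta(-1)>0$. So the convergence must be understood on sets bounded away from $\boldsymbol 0$, including neighbourhoods of infinity, as in \cite{KAL}; the paper's appeal to the functional limit theorem uses this tacitly as well.

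Finally, make the small-jump remainder explicit. Use $0\le u-(1-e^{-u})\le u^2/2$ and, since the jumps are nonnegative,
$n\,\mathbb E\bigl[\|\boldsymbol J^{(n)}_1\|^2\boldsymbol 1_{\{\|\boldsymbol J^{(n)}_1\|\le\varepsilon\}}\bigr]\le\varepsilon\, n\,\mathbb E\bigl[\|\boldsymbol J^{(n)}_1\|_1\boldsymbol 1_{\{\|\boldsymbol J^{(n)}_1\|\le\varepsilon\}}\bigr]$.
By \eqref{cond2} the right-hand side is $O(\varepsilon)$, so the quadratic term vanishes as $\varepsilon\downarrow0$.
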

\begin{proof}
Let us define
\[
\boldsymbol R^{(n)}(u)
=
\sum_{j=1}^{[nu]}\boldsymbol J_j^{(n)},
\qquad u\geq 0,
\]
for the sequence of random vectors given in \eqref{trar}. 
Since, by assumption, the latter belongs to the domain of attraction of the
multivariate subordinator, we have that
\[
\boldsymbol R^{(n)}(\cdot)
\overset{J_1}{\Longrightarrow}
\mathcal H(\cdot), \qquad n \to \infty,
\]
in
\(
D([0,\infty),\mathbb R_+^d).
\) Moreover, by recalling \eqref{nbeta}, for the fractional Poisson process \(N_\beta\), the scaling limit
\[
n^{-1}N_\beta(\lfloor n^{1/\beta}\cdot\rfloor)
\overset{J_1}{\Longrightarrow}
\mathcal L_\beta(\cdot), \qquad n \to \infty,
\]
holds in \(D([0,\infty),\mathbb R_+)\).
Since the jump array is independent of \(N_\beta\), the two convergences hold
jointly:
\[
\left(
\boldsymbol R^{(n)}(\cdot),
n^{-1}N_\beta(\lfloor n^{1/\beta}\cdot\rfloor)
\right)
\overset{J_1}{\Longrightarrow}
\left(
\mathcal H(\cdot),
\mathcal L_\beta(\cdot)
\right), \qquad n \to \infty.
\]
Now observe that
\[
\bigl(
S_{\ell_1}^{(n)}(t),\ldots,S_{\ell_d}^{(n)}(t)
\bigr)
=
\boldsymbol R^{(n)}
\left(
n^{-1}N_\beta(\lfloor n^{1/\beta}t\rfloor)
\right).
\]
Therefore, by the continuous mapping theorem for random time changes,
\[
\bigl(
S_{\ell_1}^{(n)}(\cdot),\ldots,S_{\ell_d}^{(n)}(\cdot)
\bigr)
\overset{J_1}{\Longrightarrow}
\bigl(
\mathcal H_{\ell_1}(\mathcal L_\beta(\cdot)),
\ldots,
\mathcal H_{\ell_d}(\mathcal L_\beta(\cdot))
\bigr).
\]
In particular, at \(t=1\),
\[
\bigl(
S_{\ell_1}^{(n)}(1),\ldots,S_{\ell_d}^{(n)}(1)
\bigr)
\overset{f.d.d.}{\Longrightarrow}
\bigl(
\mathcal H_{\ell_1}(T_\beta),
\ldots,
\mathcal H_{\ell_d}(T_\beta)
\bigr).
\]
Since the Gaussian variables \(\{Z_{\ell m}\}\) are independent of the jump
processes and of \(N_\beta\), we also have joint convergence
\[
\left(
S_{\ell_i}^{(n)}(1),
Z_{\ell_i m_i}
\right)_{i=1}^d
\overset{\text{d}}{\Longrightarrow}
\left(
\mathcal H_{\ell_i}(T_\beta),
Z_{\ell_i m_i}
\right)_{i=1}^d,\qquad n \to \infty .
\]
By applying again the continuous
mapping theorem,
\[
\left(
\sqrt{S_{\ell_i}^{(n)}(1)}\,Z_{\ell_i m_i}
\right)_{i=1}^d
\overset{\text{d}}{\Longrightarrow}
\left(
\sqrt{\mathcal H_{\ell_i}(T_\beta)}\,Z_{\ell_i m_i}
\right)_{i=1}^d, \qquad n \to \infty .
\]
Finally, for fixed \(L\), the vector
\(
\bigl(
\mathcal{T}_L^{(n)}(x_1),\ldots,\mathcal{T}_L^{(n)}(x_k)
\bigr)
\)
is a finite linear combination of the random variables
\[
\sqrt{S_\ell^{(n)}(1)}\,Z_{\ell m},
\qquad
0\leq \ell\leq L,\quad -\ell\leq m\leq \ell.
\]
Therefore a final application of the continuous mapping theorem gives
\[
\bigl(
\mathcal{T}_L^{(n)}(x_1),\ldots,\mathcal{T}_L^{(n)}(x_k)
\bigr)
\overset{f.d.d.}{\Longrightarrow}
\bigl(
\mathcal{T}_L(x_1),\ldots,\mathcal{T}_L(x_k)
\bigr), \qquad n \to \infty,
\]
for every \(x_1,\ldots,x_k\in\mathbb S^2\).
Since we are dealing with fixed $L$, convergence in $L^2(\mathbb S^2)$ follows immediately.. 
\end{proof}

We now prove the full convergence to the field \[
\mathcal{T}(x)
=
\sum_{\ell\ge0}\sum_{m=-\ell}^{\ell}
\sqrt{H_\ell(T_\beta)}\,Z_{\ell m}Y_{\ell m}(x).
\] in \(L^2(\mathbb S^2)\).

\begin{theorem}
Let the assumptions of Theorem \ref{teoconv} and \eqref{sqrint} hold; suppose moreover that
\begin{equation}
\sum_{\ell\ge0}(2\ell+1)C_\ell
\sup_{n\ge1}\mathbb E\left[S_\ell^{(n)}(1)\right]
<\infty .\label{hyp:sum}
\end{equation}
and define
\[
\mathcal T^{(n)}(x):
=
\sum_{\ell\ge0}\sum_{m=-\ell}^{\ell}
\sqrt{S_\ell^{(n)}(1)}\,Z_{\ell m}Y_{\ell m}(x),
\qquad x\in\mathbb S^2.
\]
Then
\begin{equation}
\mathcal T^{(n)}\Longrightarrow \mathcal T, \qquad n \to \infty,\label{convbill}
\end{equation}
in \(L^2(\mathbb S^2)\).

\end{theorem}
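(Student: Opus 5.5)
The plan is the standard approximation argument for weak convergence in a metric space (Billingsley, Theorem 3.2). We combine the fixed-$L$ convergence of Theorem \ref{teoconv} with a tail estimate that is uniform in $n$. Concretely, write $\mathcal T^{(n)}=\mathcal T^{(n)}_L+R^{(n)}_L$ and $\mathcal T=\mathcal T_L+R_L$, where
\[
R^{(n)}_L(x)=\sum_{\ell>L}\sum_{m=-\ell}^{\ell}\sqrt{S^{(n)}_\ell(1)}\,Z_{\ell m}Y_{\ell m}(x),
\]
and $R_L$ is defined analogously with $\mathcal H_\ell(T_\beta)$ in place of $S^{(n)}_\ell(1)$. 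It then suffices to verify three facts:
\begin{itemize}
\item[(i)] for each fixed $L$, $\mathcal T^{(n)}_L\Longrightarrow\mathcal T_L$ in $L^2(\mathbb S^2)$, which is Theorem \ref{teoconv};
\item[(ii)] $\mathcal T_L\to\mathcal T$ in probability in $L^2(\mathbb S^2)$ as $L\to\infty$;
\item[(iii)] for every $\varepsilon>0$, $\lim_{L\to\infty}\limsup_{n}\mathbb P(\Vert R^{(n)}_L\Vert_{L^2(\mathbb S^2)}>\varepsilon)=0$.
\end{itemize}
Together these give \eqref{convbill}.

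For (ii) and (iii) we use orthonormality of $\{Y_{\ell m}\}$. This gives $\Vert R^{(n)}_L\Vert^2_{L^2(\mathbb S^2)}=\sum_{\ell>L}\sum_m S^{(n)}_\ell(1)Z_{\ell m}^2$. Taking expectations and using independence of $Z_{\ell m}$ from the jump array and from $N_\beta$, we get
\[
\mathbb E\Vert R^{(n)}_L\Vert^2=\sum_{\ell>L}(2\ell+1)C_\ell\,\mathbb E[S^{(n)}_\ell(1)]\le \sum_{\ell>L}(2\ell+1)C_\ell\sup_{n}\mathbb E[S^{(n)}_\ell(1)].
\]
The right-hand side is the tail of the convergent series \eqref{hyp:sum}, so it tends to $0$ as $L\to\infty$ uniformly in $n$. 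Chebyshev's inequality then gives (iii). The same computation with \eqref{sqrint} gives $\mathbb E\Vert R_L\Vert^2\to0$, hence (ii). This also confirms that $\mathcal T^{(n)}$ and $\mathcal T$ are well-defined $L^2(\mathbb S^2)$-valued random elements.

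One point needs care. Theorem \ref{teoconv} is phrased through finite-dimensional distributions, and the statement that this yields convergence in $L^2(\mathbb S^2)$ deserves a line of justification. I would argue it as follows. For fixed $L$, $\mathcal T^{(n)}_L$ is the image of the finite vector $(\sqrt{S^{(n)}_\ell(1)}Z_{\ell m})_{\ell\le L,|m|\le\ell}$ under the continuous linear map $\mathbb R^{(L+1)^2}\to L^2(\mathbb S^2)$, $(c_{\ell m})\mapsto\sum c_{\ell m}Y_{\ell m}$. The proof of Theorem \ref{teoconv} shows that this vector converges in distribution. The continuous mapping theorem then gives (i) directly in $L^2(\mathbb S^2)$. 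I expect the main obstacle to be this bookkeeping rather than any estimate, since the tail bounds are immediate from the hypotheses.

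To conclude, let $f$ be bounded and Lipschitz on $L^2(\mathbb S^2)$. Then
\[
|\mathbb Ef(\mathcal T^{(n)})-\mathbb Ef(\mathcal T)|\le|\mathbb Ef(\mathcal T^{(n)}_L)-\mathbb Ef(\mathcal T_L)|+\Vert f\Vert_{\mathrm{Lip}}\big(\mathbb E\Vert R^{(n)}_L\Vert+\mathbb E\Vert R_L\Vert\big).
\]
We bound each remainder by the square root of its second moment, let $n\to\infty$ first and then $L\to\infty$. By the Portmanteau theorem this yields \eqref{convbill}.
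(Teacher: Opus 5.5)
Your proposal is correct and follows the paper's approach. The paper combines the same three ingredients: the uniform-in-$n$ tail bound from orthonormality and \eqref{hyp:sum}, the mean-square tail estimate for $\mathcal T$ from \eqref{sqrint}, and the fixed-$L$ convergence of Theorem \ref{teoconv}, joined by the standard approximation argument. Your continuous-mapping justification of step (i) in $L^2(\mathbb S^2)$ and your explicit bounded-Lipschitz conclusion supply details that the paper leaves implicit.
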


\begin{proof}
By orthonormality of the spherical harmonics and independence of
$S_\ell^{(n)}(1)$ and $Z_{\ell m}$,
\[
\begin{aligned}
\sup_{n\geq1}
\mathbb E\left[
\|\mathcal T^{(n)}-\mathcal T_L^{(n)}\|_{L^2(\mathbb S^2)}^2
\right]
&\leq
\sum_{\ell>L}(2\ell+1)C_\ell
\sup_{n\geq1}\mathbb E[S_\ell^{(n)}(1)],
\end{aligned}
\]
for any fixed \(L.\)
Hence, by the summability assumption \eqref{hyp:sum},
\begin{equation}
\lim_{L\to\infty}\sup_{n\geq1}
\mathbb E\left[
\|\mathcal T^{(n)}-\mathcal T_L^{(n)}\|_{L^2(\mathbb S^2)}^2
\right]
=0.\label{bill}
\end{equation}

Moreover, by assumption \eqref{sqrint},
\[
\mathbb E\left[
\|\mathcal T-\mathcal T_L\|_{L^2(\mathbb S^2)}^2
\right]
=
\sum_{\ell>L}(2\ell+1)C_\ell
\mathbb E[H_\ell(T_\beta)]
\longrightarrow0,
\qquad L\to\infty,
\]
so that
\begin{equation}
\mathcal T_L\longrightarrow\mathcal T
\quad\text{in mean-square in }L^2(\mathbb S^2), \qquad L \to \infty.\label{convprob}
\end{equation}
Recall that by Theorem \ref{teoconv}, for every fixed $L$,
\begin{equation}
\mathcal T_L^{(n)}
\Longrightarrow
\mathcal T_L,
\qquad n\to\infty,\label{fdd}
\end{equation}
as random elements of $L^2(\mathbb S^2)$.
The convergence in \eqref{convbill} follows by considering \eqref{bill}, \eqref{convprob} and \eqref{fdd}. 
\end{proof}

\subsection{The case of multivariate tempered stable subordinators}

In order to obtain some explicit results on moments and cumulants, we will consider the wide class of multivariate tempered stable subordinators (hereafter TSS) considered in \cite{SEM} (see also \cite{PER}), which, in the special case of independence, includes many of the examples considered in Appendix A.

Let $\alpha \in [0,1),$ let $\lambda(\cdot)$ be a finite measure on $\mathbb{S}_+^{d-1}:=\left\{\boldsymbol{x}\in \mathbb{S}^{d-1}:x_i\geq 0, i=1,...,d-1\right\}$  and $b:\mathbb{S}_+^{d-1}\to (0,+\infty)$ a Borel-measurable function; then, in this case, the Laplace exponent is equal to
\[
\varphi(\boldsymbol{\kappa})=\int_{\mathbb{S}_+^{d-1}}\int_{\mathbb{R}_+}(1-e^{-r\langle\boldsymbol{\kappa},\boldsymbol{x}\rangle})\frac{e^{-b(\boldsymbol{x}) r}}{r^{\alpha+1}}dr\lambda(d\boldsymbol{x})
\]
and the 
L\'evy measure is 
\[
\nu(dr,d\boldsymbol{x})=\frac{e^{-b(\boldsymbol{x}) r}}{r^{\alpha+1}}dr\lambda(d\boldsymbol{x}),
\]
satisfying the existence conditions that $\int_{\Vert \boldsymbol{x}\Vert \leq 1}\Vert \boldsymbol{x}\Vert\nu(d\boldsymbol{x}) <\infty$ and $\int_{\mathbb{R}_+^d}\nu(d\boldsymbol{x})=\infty$ (see Prop. 2.1 in \cite{SEM}).
It is proved in \cite{SEM} that the $k$-th moment is always finite, for $k\leq \alpha$, while, for $k>\alpha$, it is finite if and only if $\int_{\mathbb{S}_+^{d-1}}b(\boldsymbol{v})^{-\alpha-k}\lambda(d\boldsymbol{v})<\infty$. Moreover, if we denote the cumulant generating function $\Xi(\boldsymbol{\theta}):=\log\mathbb Ee^{\sum_{j=1}^d\theta_j\mathcal{H}_j(t)}$ and the mixed cumulants as $\mu_{1...d}:=\frac{\partial^{d}}{\partial \theta_1...\partial \theta_d}\Xi(\boldsymbol{\theta})\vert_{\boldsymbol{\theta}=\boldsymbol{0}}$, then 
the latter are equal to
\begin{equation}
\mu_{1...d}
=t\Gamma(d-\alpha)\int_{\mathbb{S}_+^{d-1}}b(\boldsymbol{v})^{\alpha-d}\boldsymbol{v}_1\cdots \boldsymbol{v}_d\lambda(d\boldsymbol{v}). \label{tss}
\end{equation}
As a consequence, it is easy to see that,
for example,
\begin{eqnarray}
\mathbb{E}(a^2_{\ell_1 m_1}a^2_{\ell_2 m_2})&=&C_{\ell_1}C_{\ell_2}\mathbb{E}\left[\mathcal{H}_{\ell_1}(T_\beta)\mathcal{H}_{\ell_2}(T_\beta) \right]\label{gen}\\
&=&C_{\ell_1}C_{\ell_2}\frac{\Gamma(2-\alpha)}{\Gamma(\beta+1)}\int_{\mathbb{S}_+^{d-1}}b(\boldsymbol{v})^{\alpha-2}\boldsymbol{v}_{\ell_1} \boldsymbol{v}_{\ell_2}\lambda(d\boldsymbol{v})+  \notag\\
&+&C_{\ell_1}C_{\ell_2}\frac{\Gamma(1-\alpha)^2}{\Gamma(\beta+1)^2}\int_{\mathbb{S}_+^{d-1}}b(\boldsymbol{v})^{\alpha-1}\boldsymbol{v}_{\ell_1} \lambda(d\boldsymbol{v})\int_{\mathbb{S}_+^{d-1}}b(\boldsymbol{v})^{\alpha-1}\boldsymbol{v}_{\ell_2}\lambda(d\boldsymbol{v}).
 \notag   \end{eqnarray}

    \begin{remark}
Note that we can obtain, as special cases of the TSS, some examples of the subordinators with independent components (which were introduced in the previous section and extensively treated in Appendix A), by choosing $\nu(d\boldsymbol{x})=\sum_{i=1}^dc_idx_i\delta_0(d\overline{\boldsymbol{x}}_i)$, where $c_i>0,$ $i=1,...d$, and we denoted by $\delta_0(\cdot)$ the Dirac delta function and by $ \overline{\boldsymbol{x}}_i$ the $1\times (d-1)$vector $\boldsymbol{x}$ where the $i$-th component is missing. 

In particular, the multivariate Gamma subordinator (for $\alpha=0$, $b(\boldsymbol{x})\equiv b$ and $c_i=\gamma_i$), the multivariate inverse Gaussian subordinator (for $\alpha=1/2$, $b(\boldsymbol{x})\equiv\gamma^2$ and $c_i=1$, $i=1,..,d$) and the multivariate stable subordinator (for $\alpha \in (0,1)$ and $b(\boldsymbol{x})\equiv 0$).

Indeed, under these special choices, formula \eqref{gen} coincides with \eqref{stir} and \eqref{invgau}, respectively; on the other hand, in the stable case, we can see directly from \eqref{gen} that the second mixed moments are infinite, for $b(\boldsymbol{v})\equiv 0.$
    \end{remark}

We now study the asymptotic behaviour of cross-cumulants.

An important issue concerns the persistence of dependence across different
multipoles. In the framework of Definition \ref{defgen}, this dependence is entirely
generated by the dependence structure of the underlying multivariate
subordinator.

To quantify this effect, consider the mixed fourth-order cumulant, which can be written, by recalling \eqref{gen}, as
\begin{eqnarray}
\operatorname{cum}
\left(
a_{\ell_1m_1}^2,
a_{\ell_2m_2}^2
\right)
&=&
E\!\left[
a_{\ell_1m_1}^2
a_{\ell_2m_2}^2
\right]
-
E\!\left[
a_{\ell_1m_1}^2
\right]
E\!\left[
a_{\ell_2m_2}^2
\right] \notag\\
&=&
C_{\ell_1}C_{\ell_2}
\frac{\Gamma(2-\alpha)}
{\Gamma(\beta+1)}
\int_{S_+^{d-1}}
b(v)^{\alpha-2}
v_{\ell_1}v_{\ell_2}
\,\lambda(dv). \notag
\end{eqnarray}

This expression shows that the dependence between different multipoles is
entirely determined by the spectral measure \(\lambda\) of the multivariate
tempered stable subordinator.

To investigate asymptotic dependence across multipoles, we extend the
construction to a countable family of dependent subordinators
\(\{\mathcal H_\ell(t)\}_{\ell\ge1}\), whose finite-dimensional distributions are
consistent with Definition \ref{defgen}.

In the infinite-dimensional extension, let the spectral measure
\(\lambda\) be defined on
\[
S_+^\infty
=
\left\{
v\in \ell^2:\ v_\ell\ge0,\ \sum_{\ell\ge1}v_\ell^2=1
\right\}
\]
and let $\lambda(\mathbb S^\infty_+)<\infty$; if we also assume that $b(v)\geq b_0>0$, for any $v$, then
\[
\int_{S_+^\infty} b(v)^{\alpha-2}\lambda(dv)<\infty .
\]
Under these assumptions, since \(v_\ell\to0\) for every \(v\in\ell^2\), by dominated convergence,
we have
\[
\int_{S_+^\infty}
b(v)^{\alpha-2}v_{\ell_1}v_{\ell_2}\lambda(dv)
\longrightarrow 0,
\qquad |\ell_1-\ell_2|\to\infty .
\]

As a consequence,
\[
\operatorname{cum}
\left(
a_{\ell_1m_1}^2,
a_{\ell_2m_2}^2
\right)
\longrightarrow 0,
\qquad
|\ell_1-\ell_2|\to\infty ,
\]
showing that the quadratic dependence between distant multipoles vanishes
asymptotically. On the other hand, if
\[
\limsup_{|\ell_1-\ell_2|\to\infty}
\left|
\int_{S_+^{d-1}}
b(v)^{\alpha-2}
v_{\ell_1}v_{\ell_2}
\,\lambda(dv)
\right|
>0,
\]
then the mixed cumulants remain bounded away from zero and the model
exhibits persistent dependence across harmonic scales.

Therefore, the asymptotic dependence structure of the GSRF is completely governed by the spectral measure of the underlying
multivariate tempered stable subordinator. In particular, the decay or
persistence of cross-cumulants is determined by the mixed angular weights
\[
\int_{S_+^{d-1}}
b(v)^{\alpha-2}
v_{\ell_1}v_{\ell_2}
\,\lambda(dv).
\]

\section{Some explicit bounds on the distance to Gaussianity}\label{sec:bounds}

As we discussed earlier in the paper, a key motivation of our work is the attempt to create a general class of random fields which embed the Gaussian case as a special one. It is hence natural to ask what is the distance, in a suitable probability metric, between our constructions and Gaussianity, as a function of the hyperparameters of the model. More precisely, we identify two main paths by which non-Gaussianity is introduced in the grey case, as follows:

\begin{itemize}

\item The Mittag-Leffler characteristic function returns Gaussianity only in the special case where the parameter $\beta$ is exactly equal to unity and the internal function $\varphi(\cdot)$ is the identity. As we depart from these special conditions, non-Gaussianity steps in;

\item While (under isotropy) Gaussianity entails independence of the spherical harmonic coefficients, in the grey case correlation among the coefficients can be introduced.
\end{itemize}

Our purpose in this Section is to derive explicit quantitative bounds between the distribution of the Grey spherical random fields and the Gaussian case, as a function of three parameters:

\begin{itemize}

\item the distance from $\beta$ to one

\item the distance from $\varphi(\cdot)$ to the identity function, i.e., the distance from the quadratic form in the squared spherical harmonic coefficients and the simpler quadratic norm of the latter
   
\end{itemize}

We start with the following preliminary results, which will be useful in order to prove uniform upper bounds for the distance between the grey and the Gaussian characteristic functions.

\begin{lemma}\label{lemsup}
Let \(0<\beta<1\). Then, as \(\beta\uparrow 1\),
\[
\sup_{z>0}\left|E_\beta(-z)-e^{-z}\right|
=
C_*(1-\beta)+o(1-\beta),
\]
where
\begin{equation}
C_*
=
\left|
\frac{1-(z_*+1)e^{-z_*}}{z_*-1}
\right|,\label{cstar}
\end{equation}
and \(z_*>1\) is the unique solution of
\begin{equation}
(1-z)\bigl(\operatorname{Ei}(z)-\log z\bigr)=2-e^z. \label{zstar}
\end{equation}
\end{lemma}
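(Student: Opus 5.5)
The plan is a first-order expansion in $\varepsilon:=1-\beta$, uniform in $z>0$:
\[
E_\beta(-z)=e^{-z}+(1-\beta)\,g(z)+R_\beta(z),\qquad \sup_{z>0}|R_\beta(z)|=o(1-\beta),
\]
where $g(z):=-\partial_\beta E_\beta(-z)\big|_{\beta=1}$. Granting this, $\sup_{z>0}|E_\beta(-z)-e^{-z}|=(1-\beta)\sup_{z>0}|g(z)|+o(1-\beta)$. The lemma then reduces to showing $\sup_{z>0}|g|=C_*$ and that the supremum is attained at the point $z_*$ defined by \eqref{zstar}.

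For the explicit form of $g$, I will differentiate the series \eqref{ML} term by term. This is legitimate for $\beta$ in a neighbourhood of $1$ because the series of $\beta$-derivatives converges locally uniformly in $z$. Writing $\psi$ for the digamma function, this gives
\[
g(z)=\sum_{j\ge1}\frac{(-z)^j\, j\,\psi(j+1)}{j!}.
\]
I will then use the classical generating function
\[
\sum_{j\ge0}\frac{x^j\psi(j+1)}{j!}=e^{x}\bigl(\log x-\operatorname{Ei}(-x)\bigr)-\gamma\,\cdot(\text{elementary terms})
\]
(equivalently, $\psi(j+1)=-\gamma+H_j$ together with $\sum_j H_j x^j/j!=e^{x}\int_0^x(1-e^{-t})t^{-1}dt$). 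Applying $x\,\frac{d}{dx}$ and substituting $x=-z$ writes $g$ in closed form through $e^{-z}$, $\log z$ and $\operatorname{Ei}(z)$.

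Next I maximise $|g|$. We have $g(0)=0$, and $g(z)\to0$ as $z\to\infty$, since the full difference decays like $z^{-1}/\Gamma(1-\beta)=O((1-\beta)z^{-1})$. So the supremum is attained at an interior critical point. Setting $g'(z)=0$ and simplifying, I expect to arrive at exactly \eqref{zstar}. Substituting the critical-point relation back into $g$ should then eliminate $\operatorname{Ei}$ and $\log$ and leave $|g(z_*)|=\left|\frac{1-(z_*+1)e^{-z_*}}{z_*-1}\right|=C_*$. For uniqueness of $z_*>1$, I will study the sign of the difference of the two sides of \eqref{zstar} on $(0,1]$ and on $(1,\infty)$.

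The main obstacle is the uniformity of the remainder in $z$: term-by-term expansion of the series is useless for large $z$. To handle it, I will use the subordination representation from Lemma \ref{theocm}, $E_\beta(-z)-e^{-z}=\mathbb E\bigl[e^{-zT_\beta}-e^{-z}\bigr]$, and split the range of $z$. On a compact set $[0,K]$, the Taylor remainder in $\beta$ is uniform because $\partial_\beta^2E_\beta(-z)$ is bounded there. On $[K,\infty)$, I will use the standard asymptotics $E_\beta(-z)=\frac{z^{-1}}{\Gamma(1-\beta)}+O(z^{-2})$, uniform for $\beta$ near $1$, which follow from the Hankel representation \eqref{ml4}. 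These give $|E_\beta(-z)-e^{-z}|\le C(1-\beta)/K+o(1-\beta)$, and $|g|\le C/K$ there. Choosing $K$ large, and then letting $\beta\uparrow1$, yields the uniform $o(1-\beta)$ bound.
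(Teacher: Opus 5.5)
Your overall route is the paper's: term-by-term $\beta$-differentiation, then $\psi(j+1)=H_j-\gamma$ with the harmonic-number generating function (which gives $g=-F$, $F(z)=1-e^{-z}-ze^{-z}(\operatorname{Ei}(z)-\log z)$), then Taylor control on compacts plus large-$z$ asymptotics, then $g'=0$. Two steps, however, do not work as you state them.

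\textbf{The large-$z$ range.} On $[K,\infty)$, the estimate $E_\beta(-z)=z^{-1}/\Gamma(1-\beta)+O(z^{-2})$, with a constant uniform in $\beta$, does \emph{not} give $|E_\beta(-z)-e^{-z}|\le C(1-\beta)/K+o(1-\beta)$. For fixed $K$, the remainder $O(K^{-2})$ and the term $e^{-z}\le e^{-K}$ are fixed positive quantities, not $o(1-\beta)$, as $\beta\uparrow1$. Letting $K$ grow with $\beta$ would instead require bounds on $\partial_\beta^2E_\beta(-z)$ over growing intervals, which the power series cannot supply. What you need is that the whole difference carries the factor $1-\beta$ uniformly, for instance
$\sup_{z\ge K}|E_\beta(-z)-e^{-z}|\le C(1-\beta)(K^{-1}+K^2e^{-K/2})$ for $K\ge4$.

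The subordination formula you quote, but never actually use, delivers this. Split $\mathbb E[e^{-zT_\beta}-e^{-z}]$ over $\{T_\beta<1/2\}$ and $\{T_\beta\ge1/2\}$.
On the first event, use $M_\beta(t)\le C(1-\beta)$ for $0\le t\le1/2$. This follows from $M_\beta(t)=\pi^{-1}\sum_{k\ge1}\frac{(-t)^{k-1}}{(k-1)!}\Gamma(\beta k)\sin(\pi\beta k)$, together with $|\sin(\pi\beta k)|\le\pi k(1-\beta)$ and $\Gamma(\beta k)\le2\Gamma(k)$ for $\beta\in[1/2,1]$. It contributes $O((1-\beta)/z)$.
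On the second event, expand $e^{-zt}$ to second order around $t=1$. Use $\mathbb ET_\beta-1=O(1-\beta)$ and $\mathbb E(T_\beta-1)^2=2/\Gamma(2\beta+1)-2/\Gamma(\beta+1)+1=O(1-\beta)$, both from \eqref{mom}. This contributes $O((1-\beta)z^2e^{-z/2})$.
The paper's proof makes the same unjustified passage.

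\textbf{The maximization.} This is not a single-critical-point argument. From the $j=1$ term, $g(z)=-(1-\gamma)z+O(z^2)$ near $0$, whereas $g(z)\sim1/z$ at infinity. So $g$ changes sign and $|g|$ has two interior local maxima. Accordingly \eqref{zstar}, which is exactly $g'(z)=0$, also has a root in $(0,1)$, near $z\approx0.315$. Studying the sign of the two sides on $(0,1]$ will find that root rather than exclude it, so ``unique'' can only mean unique in $(1,\infty)$.

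Your elimination step is valid at every critical point, giving $|g(z)|=\left|\frac{1-(z+1)e^{-z}}{z-1}\right|$ there. You must therefore compare the two critical values: roughly $0.059$ at the root in $(0,1)$, against $C_*\approx0.372$ at $z_*\approx3.244$. Only this comparison, together with uniqueness of the root in $(1,\infty)$, yields $\sup_{z>0}|g|=C_*$. The paper also omits this comparison.
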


\begin{proof}
Let us denote by $\gamma$ the Eulero-Mascheroni constant and by $\psi(\cdot)$ the digamma function (i.e. \(\psi(z):=\Gamma'(z)/\Gamma(z)\), $z \in \mathbb C$); then the Mittag-Leffler function \eqref{ML} can be differentiated term by term, with respect to \(\beta\), as
\[
\left.\frac{\partial}{\partial\beta}E_\beta(-z)\right|_{\beta=1}
=
-\sum_{n=1}^{\infty}
\frac{n\psi(n+1)}{n!}(-z)^n=
-\sum_{n=1}^{\infty}
\frac{n(H_n-\gamma)}{n!}(-z)^n=-\gamma z e^{-z}+z\frac{d}{dx}\left. \sum_{n=1}^{\infty}
\frac{H_n}{n!}x^n\right|_{x=-z},
\]
by recalling that, for any $n \in \mathbb N$,
\[
\psi(n+1)=\sum_{k=1}^{n}\frac{1}{k}-\gamma=:H_n-\gamma
\]
(see \cite{WIT}, sect. 12.3).
We now compute this series explicitly, by using the classical generating function of the harmonic numbers, i.e.,
\[
\sum_{n=1}^{\infty}H_n\frac{x^n}{n!}
=
e^x\operatorname{Ein}(x),
\]
where
\(
\operatorname{Ein}(x)
:=
\sum_{n=1}^{\infty}
(-1)^{n-1}x^n/n\,n!.
\)
We may now set $x=-z$, since $\operatorname{Ein}$ is entire, and we recall that
\begin{eqnarray}
    \operatorname{Ein}(-z)
&=&
\gamma+\log z-\operatorname{Ei}(z),\qquad z>0,\notag\\
\operatorname{Ein}'(z)&=&\frac{1-e^{-z}}{z},\qquad z \neq 0,
\notag
\end{eqnarray}
where $\operatorname{Ei}(\cdot)$ is the classical exponential integral, i.e., $\operatorname{Ei}(x):=\int_{-\infty}^xe^{-t}/t\,dt$. Therefore we get
\begin{equation}
\left.\frac{\partial}{\partial\beta}E_\beta(-z)\right|_{\beta=1}
=
1-e^{-z}
-
ze^{-z}\bigl(\operatorname{Ei}(z)-\log z\bigr)=:F(z),
\qquad z>0,\label{eq:Fdef}
\end{equation}
so that
\begin{equation}
E_\beta(-z)-e^{-z}
=
(\beta-1)F(z)+o(|\beta-1|)\label{asyML}
\end{equation}
locally uniformly in \(z\). In order to pass from local uniform convergence to uniform convergence on the whole half-line, we apply the uniform asymptotic estimate, valid for \(\beta\) in a neighbourhood of \(1\),
\[
E_\beta(-z)
=
\frac{1}{z\Gamma(1-\beta)}
+
O\left(\frac{1}{z^2}\right),
\qquad z\to+\infty,
\]
uniformly in \(\beta\). Since
\[
\frac{1}{\Gamma(1-\beta)}
=
1-\beta+O\bigl((1-\beta)^2\bigr),
\]
we get, uniformly for large \(z\),
\[
E_\beta(-z)
=
\frac{1-\beta}{z}
+
O\bigl((1-\beta)^2 z^{-1}\bigr)
+
O(z^{-2}).
\]
On the other hand, 
\(
e^{-z}=o(z^{-2}),
\)
and
\(
F(z)\sim -1/z
\), \( z\to+\infty.
\)
Therefore \eqref{asyML} holds
uniformly also for \(z\ge R\), after choosing \(R\) large enough and then letting \(\beta\uparrow1\). 
It follows that
\[
\sup_{z>0}|E_\beta(-z)-e^{-z}|
=
|\beta-1|\sup_{z>0}|F(z)|
+
o(|\beta-1|)=
(1-\beta)C_*+o(1-\beta),
\]
since \(\beta<1\), with
\(
C_*=\sup_{z>0}|F(z)|.
\) Differentiating \(F(\cdot)\)
and using
\[
\frac{d}{dz}\bigl(\operatorname{Ei}(z)-\log z\bigr)
=
\frac{e^z-1}{z},
\]
we obtain
\[
F'(z)
=
2e^{-z}-1
-
(1-z)e^{-z}
\bigl(\operatorname{Ei}(z)-\log z\bigr).
\]
Thus the maximum of \(|F(\cdot)|\) is attained at the unique solution \(z_*>1\) of equation \eqref{zstar} and $C_*$ is given in \eqref{cstar}.
\end{proof}

\begin{remark}
The exact value of $C_*$, given in \eqref{cstar}, can be easily obtained by numerical methods and reads
\[
C_*\simeq 0.371856, \qquad \text{for } z_*\simeq 3.244025.
\]

\end{remark}

We now 
consider the case of a multivariate subordinator, under the assumption of finiteness of moments. In order to study the distance to Gaussianity of our Grey spherical fields, we will write down a rather general form of the function $\Phi(\cdot)$, and we will argue that it can be interpreted as a perturbation of a baseline standard Gaussian vector. More precisely, let us introduce the following assumption.

\begin{condition}\label{Luglio}
For every finite collection
\(\ell_1,\ldots,\ell_d\) and for \( t\geq0
\), let
\(
\boldsymbol{\mathcal H}_{\ell_1,\ldots,\ell_d}^{(\gamma)}(t)
:=
\bigl(
\mathcal H_{\ell_1}^{(\gamma)}(t),\ldots,
\mathcal H_{\ell_d}^{(\gamma)}(t)
\bigr)\),
be a \(d\)-dimensional subordinator with Laplace exponent
\[
\varphi_{\ell_1,\ldots,\ell_d}^{(\gamma)}
(\boldsymbol z)
:=
(1-\gamma)
\|\boldsymbol z\|_1
+
\gamma
\int_{\mathbb R_+^d\setminus\{\boldsymbol0\}}
\left(
1-e^{-\langle\boldsymbol z,\boldsymbol x\rangle}
\right)
\widehat\nu_{\ell_1,\ldots,\ell_d}
(d\boldsymbol x),
\]
where \(\|\boldsymbol z\|_1
:=
\sum_{j=1}^dz_j\), for \(\boldsymbol z\in\mathbb R_+^d\), and \(\gamma:=\gamma_{\ell_1,\ldots,\ell_d}\in [0,1]
\).
Assume that the L\'evy measure \(\widehat \nu_{\ell_1,..,\ell_d}(\cdot)\) is such that
\[
\int_{\mathbb R_+^d\setminus\{\boldsymbol0\}}
x_j\,
\widehat\nu_{\ell_1,\ldots,\ell_d}
(d\boldsymbol x)
=1,
\qquad j=1,\ldots,d,
\]
and 
\[
M_2(\ell_1,\ldots,\ell_d)
:=
\max_{1\leq j\leq d}
\int_{\mathbb R_+^d\setminus\{\boldsymbol0\}}
x_j^2\,
\widehat\nu_{\ell_1,\ldots,\ell_d}
(d\boldsymbol x)
<\infty.
\]
Then we consider hereafter the GSRF defined in Def \ref{defgen}, with power spectrum $\left\{ C_{\ell
}\right\} $, and with the following characteristic function of spherical harmonic coefficients, for any choice of multipoles $\ell_1,...,\ell_d$, $d \in \mathbb N$, 
\begin{equation}
\Phi^{(\gamma)}_{\boldsymbol{a}_{\ell _{1\cdot}},...,\boldsymbol{a}_{\ell
_{d\cdot}}}(\boldsymbol{\xi}_{\ell_1\cdot},\boldsymbol{\xi}_{\ell_2\cdot},...,\boldsymbol{\xi}_{\ell_d\cdot})=E_\beta\left(-\varphi^{(\gamma)}_{\ell_1,...,\ell_d}(C_{\ell_1}
\Vert\xi_{\ell_1\cdot}\Vert^2/2,...,C_{\ell_d}
\Vert\xi_{\ell_d\cdot}\Vert^2/2)\right),\label{def:gamma}
\end{equation}%
for $\beta \in (0,1]$, $\xi_i \in\mathbb{R}$, $i=1,...,d$.
\end{condition}

\begin{remark} Condition \ref{Luglio}
can be viewed as the introduction of a perturbation into a baseline Gaussian law. Indeed, let us focus on the special case where $\beta=1$ and $d=1$; under these circumstances, we obtain the characteristic function given in Def. \ref{defgen}, in the form \eqref{def:gamma},
becomes, for any given $\ell$,
\begin{eqnarray}
\Phi^{(\gamma)} _{\boldsymbol a_{\ell }}(\boldsymbol{\xi}_\ell)&=&\exp\left\{-\varphi^{(\gamma)}_\ell(C_{\ell}
\Vert\boldsymbol{\xi}_\ell\Vert^2/2)\right\}\notag \\
&=&\exp\left\{-(1-\gamma)\Vert\boldsymbol{\xi}_\ell\Vert^2/2  \right\}\exp{\left\{-\gamma\int_{\mathbb R_+}(1-e^{-x\Vert\boldsymbol{\xi}_\ell\Vert^2/2})\widehat \nu(dx)\right\}}\notag \\
&=& \Phi_{\boldsymbol Z_{1-\gamma}}(\boldsymbol \xi_\ell)\Phi_{\boldsymbol B_\gamma}(\boldsymbol\xi_\ell).\notag
\end{eqnarray}%
Therefore, we are dealing with the sum of two independent r.v.'s, where \(\boldsymbol  Z_{1-\gamma} \) is a zero-mean, \(2\ell+1\)-dimensional Gaussian vector, with covariance matrix \((1-\gamma)I_{2\ell+1}\) and \(\boldsymbol  B_{\gamma}\) is a vector of independent Brownian motions, evaluated on a common time which follows a compound Poisson distribution with jumps' law \(\gamma\widehat\nu(\cdot)\). Similar arguments can be developed in the case $d\neq1$.
\end{remark}

We are now ready to establish our main results, which provide an explicit quantitative bound between the characteristic functions of GSRFs and their Gaussian counterpart, in terms of the non-Gaussianity parameters $\beta-1$ and $\gamma$. In particular, we shall focus on the characteristic function distance, which is given by
\begin{equation}
D_{CF}[\boldsymbol{X},\boldsymbol{Y}] :=\sup_{\boldsymbol{\xi}\in\mathbb{R}^d}|\Phi_{\boldsymbol{X}}(\boldsymbol{\xi}) -\Phi_{\boldsymbol{Y}}(\boldsymbol{\xi})| ,   
\end{equation}
where $\boldsymbol{X}$ and $\boldsymbol{Y}$ are $d$-dimensional random vectors.

\begin{theorem}\label{corr}
Let the Laplace exponent \(\varphi_{\ell_1,\ldots,\ell_d}^{(\gamma)}
(\boldsymbol z)\) satisfy  Condition \ref{Luglio} and let \((\boldsymbol{a}_{\ell _{1\cdot}},...,\boldsymbol{a}_{\ell
_{d\cdot}})\) and \((\boldsymbol{Z}_{\ell _{1\cdot}},...,\boldsymbol{Z}_{\ell
_{d\cdot}})\) denote the spherical harmonic coefficients of the GSRF defined in Def. \ref{defgen}, with characteristic function \eqref{def:gamma}, and a Gaussian standard vector of the same dimension, respectively. Then the following upper bound holds, as \(\beta\uparrow1\),

\begin{equation}\label{distchf}
    D_{CF}[(\boldsymbol{a}_{\ell _{1\cdot}},...,\boldsymbol{a}_{\ell
_{d\cdot}}),(\boldsymbol{Z}_{\ell _{1\cdot}},...,\boldsymbol{Z}_{\ell
_{d\cdot}})]\leq
(1-\beta) C_{\ell_1,\ldots,\ell_d}^{(\gamma)}
+
\frac{
8\gamma
}{e^2}
M_2(\ell_1,\ldots,\ell_d)
+
o(1-\beta),
\end{equation}
where, for \(F(x)
:=
1-e^{-x}
-xe^{-x}
\bigl(
\operatorname{Ei}(x)-\log x
\bigr),\) \(x\in \mathbb R_+,\)
\[
C_{\ell_1,\ldots,\ell_d}^{(\gamma)}
:=
\sup_{\boldsymbol z\in\mathbb R_+^d}
\left|
F\left(
\varphi_{\ell_1,\ldots,\ell_d}^{(\gamma)}
(\boldsymbol z)
\right)
\right|=
\begin{cases}
C_*,
&
\displaystyle
\textit{if }\sup_{\boldsymbol z\in\mathbb R_+^d}
\varphi_{\ell_1,\ldots,\ell_d}^{(\gamma)}
(\boldsymbol z)
=+\infty,
\\
\displaystyle
\sup_{0<u<L_{\ell_1,\ldots,\ell_d}^{(\gamma)}}
|F(u)|< C_*,
&
\displaystyle
\textit{if }L_{\ell_1,\ldots,\ell_d}^{(\gamma)}
:=
\sup_{\boldsymbol z\in\mathbb R_+^d}
\varphi_{\ell_1,\ldots,\ell_d}^{(\gamma)}
(\boldsymbol z)
<\infty.
\end{cases}
\]
\end{theorem}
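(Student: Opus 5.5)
The natural route is a triangle inequality through an intermediate characteristic function. We insert the $\beta=1$ version of the Grey law, namely $\exp\{-\varphi^{(\gamma)}_{\ell_1,\ldots,\ell_d}(\boldsymbol z)\}$, with $\boldsymbol z=(C_{\ell_1}\Vert\boldsymbol\xi_{\ell_1\cdot}\Vert^2/2,\ldots,C_{\ell_d}\Vert\boldsymbol\xi_{\ell_d\cdot}\Vert^2/2)$. We then bound separately
\[
\bigl|E_\beta(-\varphi^{(\gamma)}(\boldsymbol z))-e^{-\varphi^{(\gamma)}(\boldsymbol z)}\bigr|
\quad\text{and}\quad
\bigl|e^{-\varphi^{(\gamma)}(\boldsymbol z)}-e^{-\|\boldsymbol z\|_1}\bigr| .
\]
The second target is the Gaussian characteristic function; we read the statement with the Gaussian normalised to covariance $C_{\ell_i}$. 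Since the supremum over $\boldsymbol\xi$ of each term is at most the supremum over $\boldsymbol z\in\mathbb R_+^d$, it suffices to work with $\boldsymbol z$ directly.

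\emph{Mittag-Leffler part.} Lemma \ref{lemsup} gives $E_\beta(-u)-e^{-u}=(\beta-1)F(u)+o(1-\beta)$ uniformly in $u>0$. Substituting $u=\varphi^{(\gamma)}(\boldsymbol z)$ bounds the first term by $(1-\beta)\sup_{\boldsymbol z}|F(\varphi^{(\gamma)}(\boldsymbol z))|+o(1-\beta)$. The supremum depends only on the range of $\varphi^{(\gamma)}$. Because $\varphi^{(\gamma)}$ is continuous, vanishes at $0$, and is nondecreasing along rays, its range is $[0,L)$ or $[0,L]$. If $L=\infty$, the supremum is $\sup_{u>0}|F|=C_*$. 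If $L<\infty$, it is $\sup_{0<u<L}|F(u)|$. The strict inequality $<C_*$ needs $L\le z_*$, or uniqueness of the maximiser $z_*$, and I would simply quote the analysis of $F'$ from the lemma for this. In fact $L<\infty$ forces $\gamma=1$ and a finite Lévy measure, since otherwise the drift term diverges.

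\emph{Subordinator part.} We use $|e^{-a}-e^{-b}|\le e^{-\min(a,b)}|a-b|$. We have
\[
\varphi^{(\gamma)}(\boldsymbol z)-\|\boldsymbol z\|_1=\gamma\int\bigl(1-e^{-\langle\boldsymbol z,\boldsymbol x\rangle}-\langle\boldsymbol z,\boldsymbol x\rangle\bigr)\widehat\nu(d\boldsymbol x),
\]
using the normalisation $\int x_j\widehat\nu=1$. Since $0\le t-1+e^{-t}\le t^2/2$, the difference is at most $\frac{\gamma}{2}\int\langle\boldsymbol z,\boldsymbol x\rangle^2\widehat\nu(d\boldsymbol x)$. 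By Cauchy--Schwarz, $\langle\boldsymbol z,\boldsymbol x\rangle^2\le\|\boldsymbol z\|_1\sum_j z_jx_j^2$, so this is at most $\frac{\gamma}{2}M_2\|\boldsymbol z\|_1^2$. The difference is nonpositive, so $\varphi^{(\gamma)}\le\|\boldsymbol z\|_1$ and the minimum is $\varphi^{(\gamma)}$. That is the wrong side for damping, and this is the main obstacle.

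Two fixes are possible. The first is to use $1-e^{-s}\ge s e^{-s}$ to get $\varphi^{(\gamma)}(\boldsymbol z)\ge(1-\gamma)\|\boldsymbol z\|_1+\dots$, which does not help when $\gamma=1$. The second, which I would try, is to write the difference as $e^{-\|\boldsymbol z\|_1}\bigl(e^{\|\boldsymbol z\|_1-\varphi^{(\gamma)}}-1\bigr)$ and accept the damping factor $e^{-\|\boldsymbol z\|_1}$ together with a linear expansion. Either way, the target constant arises from $\sup_{s\ge0}\frac{s^2}{2}e^{-s/2}\cdot\frac12=8/e^2$, attained at $s=4$. This suggests the intended bound is $\frac{\gamma}{2}M_2 s^2e^{-s/2}$, obtained by damping with half of $\|\boldsymbol z\|_1$.

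To justify this I would establish $\varphi^{(\gamma)}(\boldsymbol z)\ge\|\boldsymbol z\|_1/2$ in the regime where the quadratic bound is used. For large $s$ the term is trivially bounded by $e^{-\varphi^{(\gamma)}}+e^{-s}$, and small-$\gamma M_2$ arguments cover the rest. If this fails, I would present the bound conditional on that comparison, combining it with the mean value theorem on $u\mapsto e^{-u}$. Adding the two parts yields \eqref{distchf}.
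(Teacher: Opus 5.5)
Your architecture is the paper's: the triangle inequality through $e^{-\varphi^{(\gamma)}(\boldsymbol z)}$, Lemma~\ref{lemsup} for the Mittag-Leffler term, and the Taylor bound $0\le\|\boldsymbol z\|_1-\varphi^{(\gamma)}(\boldsymbol z)\le\frac{\gamma}{2}M_2\|\boldsymbol z\|_1^2$. Your Cauchy--Schwarz step is the paper's convexity inequality $\langle\boldsymbol\theta,\boldsymbol x\rangle^2\le\sum_j\theta_jx_j^2$ in another form. The gap is the second term, which you flag as the main obstacle and then leave open; ``small-$\gamma M_2$ arguments cover the rest'' and ``present the bound conditional on that comparison'' are not a proof. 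Write $s:=\|\boldsymbol z\|_1$, $\boldsymbol z=s\boldsymbol\theta$, and $\mu:=\gamma M_2(\ell_1,\ldots,\ell_d)$. Neither of your fixes works as stated. Writing the difference as $e^{-s}\bigl(e^{s-\varphi^{(\gamma)}}-1\bigr)$ with only $s-\varphi^{(\gamma)}\le\frac{\mu}{2}s^2$ available gives nothing for large $s$. The crude bound $e^{-\varphi^{(\gamma)}}+e^{-s}$ is useless without a lower bound on $\varphi^{(\gamma)}$ of order $1/\mu$. Also, $\sup_{s\ge0}\frac{s^2}{2}e^{-s/2}$ is already $8/e^2$, so the extra factor $\frac12$ is a slip.

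The missing step is a case split at $s=\mu^{-1}$; if $\mu=0$ the term vanishes identically. For $0<s\le\mu^{-1}$, your own quadratic bound gives the comparison you were looking for, $\varphi^{(\gamma)}(\boldsymbol z)\ge s-\frac{\mu}{2}s^2\ge\frac{s}{2}$, hence
\[
0\le e^{-\varphi^{(\gamma)}(\boldsymbol z)}-e^{-s}\le e^{-\varphi^{(\gamma)}(\boldsymbol z)}\bigl(s-\varphi^{(\gamma)}(\boldsymbol z)\bigr)\le\frac{\mu}{2}s^2e^{-s/2}\le\frac{8}{e^2}\mu .
\]
For $s\ge\mu^{-1}$, use the monotonicity along rays that you already invoked in the Mittag-Leffler part: $\varphi^{(\gamma)}(s\boldsymbol\theta)\ge\varphi^{(\gamma)}(\mu^{-1}\boldsymbol\theta)\ge\frac{1}{2\mu}$. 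So the difference is at most $e^{-1/(2\mu)}$. When $\mu\le1$ this is $\le\frac{2}{e}\mu\le\frac{8}{e^2}\mu$, because $te^{-t}\le e^{-1}$. When $\mu>1$ the trivial bound $1<\frac{8}{e^2}\mu$ suffices. This gives the summand $\frac{8\gamma}{e^2}M_2(\ell_1,\ldots,\ell_d)$ uniformly in $\boldsymbol z$; without it your argument does not establish \eqref{distchf}.

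Your caveat on the strict inequality $C^{(\gamma)}_{\ell_1,\ldots,\ell_d}<C_*$ is fair, and the paper's proof does not address it either. Since $|F|$ attains $C_*$ only at $z_*$ and the range of $\varphi^{(\gamma)}$ contains $(0,L)$, strictness holds exactly when $L<z_*$, not $L\le z_*$.
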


\begin{proof}
We write, for every
\(\boldsymbol z\in
\mathbb R_+^d\setminus\{\boldsymbol0\}\),
\[
r:=\|\boldsymbol z\|_1,
\qquad
\boldsymbol\theta
:=
\frac{\boldsymbol z}{\|\boldsymbol z\|_1},
\]
so that \(\boldsymbol z=r\boldsymbol\theta\), \(r>0\), and
\(
\boldsymbol\theta\in
\Delta_{d-1}
:=
\left\{
\boldsymbol\theta\in\mathbb R_+^d:
\|\boldsymbol\theta\|_1=1
\right\}.
\)
Moreover, for every fixed \(r>0\) and 
\(\boldsymbol\theta\in\Delta_{d-1}\), we define
\(
\varphi_{\boldsymbol\theta}^{(\gamma)}(r)
:=
\varphi_{\ell_1,\ldots,\ell_d}^{(\gamma)}
(r\boldsymbol\theta),
\)
and we write, for brevity,
\(\widehat\nu
:=
\widehat\nu_{\ell_1,\ldots,\ell_d}.
\)
Then
\[
\varphi_{\boldsymbol\theta}^{(\gamma)}(r)
=
(1-\gamma)r
+
\gamma
\int_{\mathbb R_+^d\setminus\{\boldsymbol0\}}
\left(
1-e^{-r\langle\boldsymbol\theta,\boldsymbol x\rangle}
\right)
\widehat\nu(d\boldsymbol x).
\]
Since
\[
\int_{\mathbb R_+^d\setminus\{\boldsymbol0\}}
\langle\boldsymbol\theta,\boldsymbol x\rangle
\widehat\nu(d\boldsymbol x)
=
\sum_{j=1}^d\theta_j
\int_{\mathbb R_+^d\setminus\{\boldsymbol0\}}
x_j\,\widehat\nu(d\boldsymbol x)
=1,
\]
we can write
\[
\begin{split}
r-\varphi_{\boldsymbol\theta}^{(\gamma)}(r)
&=
\gamma
\int_{\mathbb R_+^d\setminus\{\boldsymbol0\}}
\left(
r\langle\boldsymbol\theta,\boldsymbol x\rangle
-1
+e^{-r\langle\boldsymbol\theta,\boldsymbol x\rangle}
\right)
\widehat\nu(d\boldsymbol x).
\end{split}
\]
By recalling that
\(
0\leq u-1+e^{-u}\leq u^2/2,
\) for \( u\geq0,
\)
we obtain
\begin{equation}\label{gamma-bound}
0
\leq
r-\varphi_{\boldsymbol\theta}^{(\gamma)}(r)
\leq
\frac{\gamma r^2}{2}
\int_{\mathbb R_+^d\setminus\{\boldsymbol0\}}
\langle\boldsymbol\theta,\boldsymbol x\rangle^2
\widehat\nu(d\boldsymbol x)\leq
\frac{\gamma r^2}{2}\sum_{j=1}^d\theta_j
\int_{\mathbb R_+^d\setminus\{\boldsymbol0\}} x_j^2\,\widehat\nu(d\boldsymbol x)
\leq
\frac{\gamma r^2}{2}M_2(\ell_1,\ldots,\ell_d),
\end{equation}
which follows by considering the convexity of \(x\mapsto x^2\) and the consequent inequality
\(
\langle\boldsymbol\theta,\boldsymbol x\rangle^2
\leq
\sum_{j=1}^d\theta_jx_j^2.
\)
We now write
\begin{equation}\label{sec}
\left|
E_\beta\left(
-\varphi_{\boldsymbol\theta}^{(\gamma)}(r)
\right)
-e^{-r}
\right|
\leq
\left|
E_\beta\left(
-\varphi_{\boldsymbol\theta}^{(\gamma)}(r)
\right)
-
e^{-\varphi_{\boldsymbol\theta}^{(\gamma)}(r)}
\right|
+
\left|
e^{-\varphi_{\boldsymbol\theta}^{(\gamma)}(r)}
-e^{-r}
\right|,
\end{equation}
so that, for the first term, Lemma \ref{lemsup} yields
\begin{equation}\label{try}
\sup_{\boldsymbol\theta\in\Delta_{d-1}}
\sup_{r>0}
\left|
E_\beta\left(
-\varphi_{\boldsymbol\theta}^{(\gamma)}(r)
\right)
-
e^{-\varphi_{\boldsymbol\theta}^{(\gamma)}(r)}
\right|
\leq
C_{\ell_1,\ldots,\ell_d}^{(\gamma)}
(1-\beta)
+
o(1-\beta),
\end{equation}
since
\[
\left\{
\varphi_{\boldsymbol\theta}^{(\gamma)}(r):
r>0,\
\boldsymbol\theta\in\Delta_{d-1}
\right\}
=
\left\{
\varphi_{\ell_1,\ldots,\ell_d}^{(\gamma)}
(\boldsymbol z):
\boldsymbol z\in
\mathbb R_+^d\setminus\{\boldsymbol0\}
\right\}.
\]
It remains to find an upper bound for
\[
D_{\ell_1,\ldots,\ell_d}^{(\gamma)}
:=
\sup_{\boldsymbol\theta\in\Delta_{d-1}}
\sup_{r>0}
\left|
e^{-\varphi_{\boldsymbol\theta}^{(\gamma)}(r)}
-e^{-r}
\right|.
\]
By \eqref{gamma-bound}, we have that
\(
\varphi_{\boldsymbol\theta}^{(\gamma)}(r)
\leq r,
\)
where the equality holds if 
\(
\mu
:=
\gamma
M_2(\ell_1,\ldots,\ell_d)=0,
\)
and the second term in \eqref{sec} vanishes identically. Thus we assume that
\(\mu>0\). For
\(
0<r\leq \mu^{-1},
\)
formula \eqref{gamma-bound} gives
\[
\varphi_{\boldsymbol\theta}^{(\gamma)}(r)
\geq
r-\frac{\mu}{2}r^2
\geq
\frac r2.
\]
Thus,
\[
0
\leq
e^{-\varphi_{\boldsymbol\theta}^{(\gamma)}(r)}
\left(
1-e^{-(r-\varphi_{\boldsymbol\theta}^{(\gamma)}(r))}
\right)
\leq
e^{-\varphi_{\boldsymbol\theta}^{(\gamma)}(r)}
\left(
r-\varphi_{\boldsymbol\theta}^{(\gamma)}(r)
\right)
\leq
\frac{
\mu
}{2}
r^2e^{-r/2}.
\]
By recalling 
that \(
\sup_{z>0}z^2e^{-z/2}=16e^{-2}
\),
we also have
\[
\sup_{\boldsymbol\theta\in\Delta_{d-1}}
\sup_{0<r\leq \mu^{-1}}
\left(
e^{-\varphi_{\boldsymbol\theta}^{(\gamma)}(r)}
-e^{-r}
\right)
\leq
\frac8{e^2}
\mu.
\]
In the other case, for
\(
r\geq \mu^{-1},
\)
the monotonicity of
\(\varphi_{\boldsymbol\theta}^{(\gamma)}(\cdot)\) gives
\(
e^{-\varphi_{\boldsymbol\theta}^{(\gamma)}(r)}
\leq
e^{
-\varphi_{\boldsymbol\theta}^{(\gamma)}
(\mu^{-1})
}.
\)
Moreover,
\[
\varphi_{\boldsymbol\theta}^{(\gamma)}
(\mu^{-1})
\geq
\frac1{
2\mu
},
\]
and hence
\[
0
\leq
e^{-\varphi_{\boldsymbol\theta}^{(\gamma)}(r)}
-e^{-r}
\leq
e^{-1/(2\mu)}.
\]
If
\(
0<\mu\leq1,
\)
then
\[
e^{-1/(2\mu)}
\leq
\frac2e
\mu
\leq
\frac8{e^2}
\mu.
\]
If
\(
\mu>1,
\)
we use the trivial upper bound
\[
e^{-\varphi_{\boldsymbol\theta}^{(\gamma)}(r)}
-e^{-r}
\leq1
\leq
\frac8{e^2}
\mu.
\]
Therefore,
\[
D_{\ell_1,\ldots,\ell_d}^{(\gamma)}
\leq
\frac{
8\gamma
}{e^2}
M_2(\ell_1,\ldots,\ell_d),
\]
which, combined with \eqref{try} yields 
\begin{equation}
\label{ubi}
\sup_{\boldsymbol z\in
\mathbb R_+^d\setminus\{\boldsymbol0\}}
\left|
E_\beta\left(
-\varphi_{\ell_1,\ldots,\ell_d}^{(\gamma)}
(\boldsymbol z)
\right)
-e^{-\|\boldsymbol z\|_1}
\right|
\leq
C_{\ell_1,\ldots,\ell_d}^{(\gamma)}
(1-\beta)
+
\frac{
8\gamma
}{e^2}
M_2(\ell_1,\ldots,\ell_d)
+
o(1-\beta).
\end{equation}
Equation
\eqref{distchf} follows from the definition of the distance $D_{CF}$.
\end{proof}

\begin{remark} In the special case \(\beta=1\),
the parametrization
\[
\varphi_{\ell_1,\ldots,\ell_d}^{(\gamma)}
(\boldsymbol z)
=
(1-\gamma)
\|\boldsymbol z\|_1
+
\gamma
\widehat\varphi_{\ell_1,\ldots,\ell_d}
(\boldsymbol z),
\qquad
0\leq\gamma\leq1,
\]
provides an interpolation between the Gaussian case and the
multivariate subordinators considered in the previous sections.
Indeed, for
\(\gamma=0\), one has
\(
\varphi_{\ell_1,\ldots,\ell_d}^{(0)}
(\boldsymbol z)
=
\|\boldsymbol z\|_1,
\)
which is the Laplace exponent of the deterministic process
\(
t\longmapsto t(1,\ldots,1),
\)
and hence corresponds to the Gaussian
case. On the other hand, for
\(\gamma=1\),
\(
\varphi_{\ell_1,\ldots,\ell_d}^{(1)}
(\boldsymbol z)
=
\widehat\varphi_{\ell_1,\ldots,\ell_d}
(\boldsymbol z),
\)
so that the original multivariate subordinator is recovered.

More generally, if
\(\widehat{\boldsymbol{\mathcal H}}_{\ell_1,\ldots,\ell_d}\)
is a multivariate subordinator with Laplace exponent
\(\widehat\varphi_{\ell_1,\ldots,\ell_d}\), then
\[
\boldsymbol{\mathcal H}_{\ell_1,\ldots,\ell_d}^{(\gamma)}(t)
\overset{\mathrm{f.d.d.}}{=}
(1-\gamma)
t(1,\ldots,1)
+
\widehat{\boldsymbol{\mathcal H}}_{\ell_1,\ldots,\ell_d}
\bigl(
\gamma t
\bigr).
\]
Thus, for \(0<\gamma<1\), the process contains
both a deterministic drift component and the jump structure of the
original multivariate subordinator.

In particular, by choosing
\(\widehat\varphi_{\ell_1,\ldots,\ell_d}\) as the normalized Laplace
exponent of the corresponding process, this construction includes the
multivariate Gamma, inverse Gaussian, compound Poisson and tempered
stable subordinators considered in Appendix A. These models satisfy the
assumptions of the previous lemma whenever their first moments are normalized
and the second moments of the corresponding L\'evy measure are finite.
The multivariate stable subordinator is also included in the
parametrized family for
\(\gamma=1\), but it does not satisfy the
finite-second-moment assumption and must therefore be treated
separately.

For the family of multivariate TSS's introduced in Section \ref{secdepsub}, the normalizing condition on first moment is satisfied only under symmetry assumption on the spectral measure \(\lambda(\cdot)\) and of the function $b(\cdot)$ in \eqref{tss}, i.e. the integral
\(
I_{\ell_1,...,\ell_d}:=\int_{\mathbb{S}_+^{d-1}}b_{\ell_1,...,\ell_d}(\boldsymbol{v})^{\alpha-1}\boldsymbol{v}_j\lambda_{\ell_1,...,\ell_d}(d\boldsymbol{v}) 
\) must be independent of $j$. In this case,  Lemma
\ref{corr} holds with
\[
M_2(\ell_1,\ldots,\ell_d)
=\frac{1-\alpha}{I_{\ell_1,...,\ell_d}}
\max_{1\leq j\leq d}
\int_{\mathbb S^{d-1}}
b_{\ell_1,...,\ell_d}(\boldsymbol v)^{\alpha-2}\,
v_j^2
\lambda_{\ell_1,...,\ell_d}(d\boldsymbol v)
.
\]
\end{remark}

When the moments of the multivariate subordinators are not bounded and in the special case of a stable $d$-dimensional subordinator, the assumption in Condition \ref{Luglio} and the bound given in Lemma \ref{corr} must be replaced by the following ones.

\begin{remark}
Clearly, when the spherical harmonic coefficients have infinite variance, the associated random field cannot have finite variance either, and hence the expansion \eqref{eq:SpecRap} does not hold in general. However, for a \textit{finite} set of multipoles $\ell=0,1,2,...,L<\infty,$ equation \eqref{eq:SpecRap} can be taken as a definition, the equality holding in the a.s. sense. In these cases, the field is still isotropic in the strong-sense, although not wide-sense (see \cite{MP11}, ch. 5, p. 115).
\end{remark}

\begin{lemma}\label{Sept}
Let $0<\beta<1$, and let, for any $\ell_1,\ldots,\ell_d$, \((\boldsymbol{Z}_{\ell _{1\cdot}},...,\boldsymbol{Z}_{\ell
_{d\cdot}})\) be a Gaussian standard vector of dimension $d$. Let moreover 
\((\boldsymbol{a}_{\ell _{1\cdot}},...,\boldsymbol{a}_{\ell
_{d\cdot}})\)
denote the spherical harmonic coefficients of the GSRF defined in Def. \ref{defgen}, where
\(
\boldsymbol {\mathcal H}_{\ell_1,\ldots,\ell_d}(t)
=
\bigl(\mathcal H_{\ell_1}(t),\ldots, \mathcal H_{\ell_d}(t)\bigr)
\), for \(t\geq0,
\) is a non-degenerate, strictly $\alpha$-stable $d$-dimensional subordinator,
with $0<\alpha<1$ and Laplace exponent
\(
\varphi_{\ell_1,\ldots,\ell_d}:
\mathbb R_+^d\to\mathbb R_+.
\)

Then, as $\beta\uparrow1$, the following upper bound holds
\begin{equation}
  D_{CF}[(\boldsymbol{a}_{\ell _{1\cdot}},...,\boldsymbol{a}_{\ell
_{d\cdot}}),(\boldsymbol{Z}_{\ell _{1\cdot}},...,\boldsymbol{Z}_{\ell
_{d\cdot}})]\leq
C^*(1-\beta)
+
\frac{1-\alpha}{\alpha}
\mathcal M_{\ell_1,\ldots,\ell_d}
+
\mathcal Q_{\ell_1,\ldots,\ell_d}
+
o(1-\beta),\label{res}
\end{equation}
where
\[
\mathcal M_{\ell_1,\ldots,\ell_d}
:=
\sup_{\boldsymbol\theta\in\Delta_{d-1}}
\sup_{u>0}
u e^{-u}
\left|
\log\left(
\frac{u}{
\varphi_{\ell_1,\ldots,\ell_d}(\boldsymbol\theta)}
\right)
\right|
\]
and
\[
\mathcal Q_{\ell_1,\ldots,\ell_d}
:=
\sup_{\boldsymbol\theta\in\Delta_{d-1}}
\frac{
\left|
\varphi_{\ell_1,\ldots,\ell_d}(\boldsymbol\theta)-1
\right|
}{
e\min\{
\varphi_{\ell_1,\ldots,\ell_d}(\boldsymbol\theta),1
\}
}.
\]
\end{lemma}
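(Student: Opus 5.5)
The proof will follow the structure of Theorem \ref{corr}, via a triangle inequality. Write \(\varphi:=\varphi_{\ell_1,\ldots,\ell_d}\) and use the same polar decomposition \(\boldsymbol z=r\boldsymbol\theta\), with \(r=\|\boldsymbol z\|_1\) and \(\boldsymbol\theta\in\Delta_{d-1}\). Strict \(\alpha\)-stability gives homogeneity, \(\varphi(r\boldsymbol\theta)=r^\alpha\varphi(\boldsymbol\theta)\). Non-degeneracy gives \(c_{\boldsymbol\theta}:=\varphi(\boldsymbol\theta)>0\) for every \(\boldsymbol\theta\in\Delta_{d-1}\).

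As in the proof of Theorem \ref{corr}, the characteristic-function distance reduces to
\[
\sup_{\boldsymbol\theta\in\Delta_{d-1}}\sup_{r>0}\left|E_\beta(-c_{\boldsymbol\theta}r^\alpha)-e^{-r}\right|.
\]
I split this quantity as
\[
\left|E_\beta(-c_{\boldsymbol\theta}r^\alpha)-e^{-c_{\boldsymbol\theta}r^\alpha}\right|+\left|e^{-c_{\boldsymbol\theta}r^\alpha}-e^{-c_{\boldsymbol\theta}r}\right|+\left|e^{-c_{\boldsymbol\theta}r}-e^{-r}\right|.
\]
The first term is handled directly by Lemma \ref{lemsup}. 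Since \(\{c_{\boldsymbol\theta}r^\alpha\}\subseteq(0,\infty)\), it is bounded uniformly by \(C_*(1-\beta)+o(1-\beta)\); this is the constant written \(C^*\) in \eqref{res}.

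For the third term (the ``scale'' mismatch), apply the mean value theorem in the scale parameter: \(|e^{-cr}-e^{-r}|\le |c-1|\,r\,e^{-\min\{c,1\}r}\). Then use \(\sup_{r>0}re^{-mr}=1/(em)\) with \(m=\min\{c_{\boldsymbol\theta},1\}\). Taking the supremum over \(\boldsymbol\theta\) gives exactly \(\mathcal Q_{\ell_1,\ldots,\ell_d}\).

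The second term (the ``index'' mismatch between \(r^\alpha\) and \(r\)) is the main obstacle. My first attempt is the mean value theorem applied along the exponent. Set \(g(s):=\exp(-c_{\boldsymbol\theta}r^s)\) for \(s\in[\alpha,1]\). Then
\[
g'(s)=-c_{\boldsymbol\theta}r^s\log r\,e^{-c_{\boldsymbol\theta}r^s}.
\]
With the substitution \(u=c_{\boldsymbol\theta}r^s\) we have \(\log r=s^{-1}\log(u/c_{\boldsymbol\theta})\), so
\[
|g'(s)|=s^{-1}\,ue^{-u}\,|\log(u/c_{\boldsymbol\theta})|\le s^{-1}\mathcal M_{\ell_1,\ldots,\ell_d}.
\]
Integrating over \([\alpha,1]\) gives \(\mathcal M\log(1/\alpha)\). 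Since \(\log(1/\alpha)\le(1-\alpha)/\alpha\), this yields the stated factor \((1-\alpha)/\alpha\); alternatively, one can bound \(s^{-1}\le\alpha^{-1}\) on an interval of length \(1-\alpha\).

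The care needed in this step is to check that \(\mathcal M\) is finite, which requires \(c_{\boldsymbol\theta}\) to be bounded away from \(0\) and \(\infty\) on the compact simplex. This follows from continuity and non-degeneracy of the spectral measure. Summing the three bounds and taking suprema over \(\boldsymbol\theta\) and \(r\) gives \eqref{res}.
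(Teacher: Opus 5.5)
Your proposal is correct and follows the paper's proof essentially step by step. It uses the same three-term splitting via homogeneity $\varphi(r\boldsymbol\theta)=r^\alpha\varphi(\boldsymbol\theta)$, Lemma \ref{lemsup} for the first term, the mean value theorem in the exponent with the substitution $u=\varphi(\boldsymbol\theta)r^s$ and $s\ge\alpha$ for the second, and the mean value theorem in the scale with $\sup_{r>0}re^{-mr}=1/(em)$ for the third; your integrated variant of the second step, giving $\log(1/\alpha)\,\mathcal M_{\ell_1,\ldots,\ell_d}\le\frac{1-\alpha}{\alpha}\mathcal M_{\ell_1,\ldots,\ell_d}$, is just a slightly sharper form of the paper's pointwise bound $\xi^{-1}\le\alpha^{-1}$.
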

\begin{proof}
Since, by assumption, $\boldsymbol {\mathcal H}_{\ell_1,\ldots,\ell_d}$ is non-degenerate, i.e. then \[
\inf_{\boldsymbol\theta\in\Delta_{d-1}}
\varphi_{\ell_1,\ldots,\ell_d}(\boldsymbol\theta)>0.
\]
Moreover, it is strictly
$\alpha$-stable, so that its Laplace exponent is homogeneous of degree
$\alpha$, namely
\[
\varphi_{\ell_1,\ldots,\ell_d}(\boldsymbol z)
=
r^\alpha
\varphi_{\ell_1,\ldots,\ell_d}(\boldsymbol\theta),\qquad r:=\Vert\boldsymbol{z}\Vert>0,\boldsymbol\theta
:=
\boldsymbol z/\|\boldsymbol z\|_1 .
\]
Hence
we can write, for $\boldsymbol{z}=r\boldsymbol{\theta}$, \begin{equation}\label{6dieci}
\begin{aligned}
&
\left|
E_\beta\!\left(
-\varphi_{\ell_1,\ldots,\ell_d}(\boldsymbol z)
\right)
-e^{-\|\boldsymbol z\|_1}
\right|
\\
&\leq
\left|
E_\beta\!\left(
-r^\alpha\varphi_{\ell_1,\ldots,\ell_d}(\boldsymbol\theta)
\right)
-
e^{-r^\alpha\varphi_{\ell_1,\ldots,\ell_d}(\boldsymbol\theta)}
\right|
+
\left|
e^{-r^\alpha\varphi_{\ell_1,\ldots,\ell_d}(\boldsymbol\theta)}
-
e^{-r\varphi_{\ell_1,\ldots,\ell_d}(\boldsymbol\theta)}
\right|\notag
\\
&\quad+
\left|
e^{-r\varphi_{\ell_1,\ldots,\ell_d}(\boldsymbol\theta)}
-e^{-r}
\right|.
\end{aligned}
\end{equation}
For the first term, the change of variable
\(
u=r^\alpha
\varphi_{\ell_1,\ldots,\ell_d}(\boldsymbol\theta)
\), together with Lemma \ref{lemsup}, yield
\begin{equation}
\sup_{r>0}
\left|
E_\beta\!\left(
-r^\alpha\varphi_{\ell_1,\ldots,\ell_d}(\boldsymbol\theta)
\right)
-
e^{-r^\alpha\varphi_{\ell_1,\ldots,\ell_d}(\boldsymbol\theta)}
\right|
=
C^*(1-\beta)+o(1-\beta).
\label{6undici}
\end{equation}
For the second term, we apply the mean value theorem in order to write, for some $\xi\in(\alpha,1)$,
\[
\left|
e^{-r^\alpha\varphi_{\ell_1,\ldots,\ell_d}(\boldsymbol\theta)}
-
e^{-r\varphi_{\ell_1,\ldots,\ell_d}(\boldsymbol\theta)}
\right|
\leq
(1-\alpha)
\varphi_{\ell_1,\ldots,\ell_d}(\boldsymbol\theta)
r^\xi|\log r|\,
e^{-\varphi_{\ell_1,\ldots,\ell_d}(\boldsymbol\theta)r^\xi},
\]
for any fixed $r>0$. Then, for
\(
v=
\varphi_{\ell_1,\ldots,\ell_d}(\boldsymbol\theta)r^\xi,
\)
we have
\[
|\log r|
=
\frac1{\xi}
\left|
\log
\frac{v}{
\varphi_{\ell_1,\ldots,\ell_d}(\boldsymbol\theta)}
\right|.
\]
Since $\xi\geq\alpha$,
\begin{equation}
\sup_{r>0}
\left|
e^{-r^\alpha\varphi_{\ell_1,\ldots,\ell_d}(\boldsymbol\theta)}
-
e^{-r\varphi_{\ell_1,\ldots,\ell_d}(\boldsymbol\theta)}
\right|
\leq
\frac{1-\alpha}{\alpha}
\sup_{v>0}
v e^{-v}
\left|
\log
\frac{v}{
\varphi_{\ell_1,\ldots,\ell_d}(\boldsymbol\theta)}
\right|.
\label{6dodici}
\end{equation}
Finally, by applying again the mean value theorem to the third term, we get
\[
\left|
e^{-r\varphi_{\ell_1,\ldots,\ell_d}(\boldsymbol\theta)}
-e^{-r}
\right|\leq
\left|
\varphi_{\ell_1,\ldots,\ell_d}(\boldsymbol\theta)-1
\right|
r
e^{-\min\{
\varphi_{\ell_1,\ldots,\ell_d}(\boldsymbol\theta),1
\}r}.
\]
Since
\[
\sup_{r>0}re^{-ar}=\frac1{ea},
\qquad a>0,
\]
we obtain
\begin{equation}
\sup_{r>0}
\left|
e^{-r\varphi_{\ell_1,\ldots,\ell_d}(\boldsymbol\theta)}
-e^{-r}
\right|
\leq
\frac{
\left|
\varphi_{\ell_1,\ldots,\ell_d}(\boldsymbol\theta)-1
\right|
}{
e\min\{
\varphi_{\ell_1,\ldots,\ell_d}(\boldsymbol\theta),1
\}
}.
\label{6tredici}
\end{equation}
Taking the supremum over
$\boldsymbol\theta\in\Delta_{d-1}$ in
\eqref{6dieci}, \eqref{6dodici} and \eqref{6tredici} gives the upper bound in \eqref{res} for the distance
\begin{equation}\notag
    D_{CF}[(\boldsymbol{a}_{\ell _{1\cdot}},...,\boldsymbol{a}_{\ell
_{d\cdot}}),(\boldsymbol{Z}_{\ell _{1\cdot}},...,\boldsymbol{Z}_{\ell
_{d\cdot}})]=\sup_{\boldsymbol z\in\mathbb R_+^d\setminus\{\boldsymbol0\}}\left|E_\beta\!\left(-\varphi_{\ell_1,\ldots,\ell_d}(\boldsymbol z)\right)-e^{-\|\boldsymbol z\|_1}\right|.
\end{equation}

\end{proof}

\begin{remark}In some special cases, the previous result can be considerably simplified. 
A particularly simple example is obtained when the components of the
multivariate $\alpha$-stable subordinator are completely dependent. More precisely, let
$\{\mathcal H_\alpha(t),\,t\geq0\}$ be a one-dimensional strictly
$\alpha$-stable subordinator, $0<\alpha<1$, with Laplace transform
\[
\mathbb E\left[e^{-z\mathcal H_\alpha(t)}\right]
=
e^{-tc z^\alpha},
\qquad z\geq0,
\]
for some $c>0$, and define
\(
\boldsymbol {\mathcal H}(t)
=
\bigl(\mathcal H_\alpha(t),\ldots,\mathcal H_\alpha(t)\bigr),
\) for any $t \geq 0.$
Thus, all the components are driven by the same stable subordinator and
are therefore completely dependent. In this case,
\[
\mathbb E\left[
e^{-\langle\boldsymbol z,\boldsymbol {\mathcal{H}}(t)\rangle}
\right]
=
\mathbb E\left[
e^{-(z_1+\cdots+z_d)\mathcal H_\alpha(t)}
\right]
=
\exp\left\{
-tc(z_1+\cdots+z_d)^\alpha
\right\},
\]
so that the multivariate Laplace exponent is
\(
\varphi(\boldsymbol z)
=
c\left(z_1+\cdots+z_d\right)^\alpha.
\)

For $\boldsymbol\theta\in\Delta_{d-1}$, since
$\theta_1+\cdots+\theta_d=1$, we have that
\(
\varphi(\boldsymbol\theta)=c.
\)
Consequently, the quantity appearing in Lemma \ref{Sept} reduces, under the standard normalization $c=1$, to
\[
\mathcal M
=
\sup_{u>0}u e^{-u}|\log u|
.
\]
Hence, in the completely dependent normalized case,
$\mathcal M$ is a universal constant, independent of both the dimension
$d$ and the stability index $\alpha$.
 By numerical methods the previous constant can be evaluated as $\mathcal M\simeq 0.270053$. On the other hand, in this special case it is
 \[\mathcal Q_{\ell_1,\ldots,\ell_d}=\frac{|c-1|}{e\, \min\{c,1\}},\]
 so that, for $c=1$ it vanishes. Therefore, the bound in Lemma \ref{Sept} simplifies into
 \begin{equation}
\sup_{\boldsymbol z\in
\mathbb R_+^d\setminus\{\boldsymbol0\}}
\left|
E_\beta\!\left(
-\varphi_{\ell_1,\ldots,\ell_d}(\boldsymbol z)
\right)
-e^{-\|\boldsymbol z\|_1}
\right|
\leq
C^*(1-\beta)
+
\frac{0.27(1-\alpha)}{\alpha}
+
o(1-\beta).\notag
\end{equation}
\end{remark}

\section{Generalizations of the random variance case }\label{sec:GGSRF}
Whole our paper has been concerned with different form of GSRFs related to Mittag-Leffler characteristic functions; it is indeed possible to consider many other models which are made non-Gaussian by allowing for a random global variance. 
More precisely, we can extend the previous results by considering more general completely monotone functions, rather than the (one-parameter) Mittag-Leffler. Indeed, it is well-known that, by the Bernstein theorem, if $G:\mathbb 
 R_+ \to \mathbb{R}_+$ is completely monotone, then there exists a unique measure $\mu $ such that%
\begin{equation*}
G(x)=\int_{0}^{+\infty }e^{-xt}\mu_G (dt), \qquad x>0,
\end{equation*}%
for a unique measure $\mu_G$ and thus it coincides with its Laplace transform (see \cite{SCH}, p.3). Moreover, it follows from the Bochner-Minlos theorem that there exists
a unique functional $\Phi $ such that $\Phi (\boldsymbol \xi ):=G(\left\Vert
\boldsymbol \xi \right\Vert ^{2})\in
L^{2}(\mathbb{R}^d)$ 
and
\begin{equation*}
\Phi (\xi )=\int_{\mathbb{R}^d}e^{i\left\langle \boldsymbol x,\boldsymbol \xi \right\rangle }d\mu_G
(\boldsymbol x)\text{,\qquad }\boldsymbol \xi \in \mathbb{R}^d.
\end{equation*}
Let us now define the probability measure $\overline{\mu}_G$ by the following normalized  completely monotone function
\[\overline{G}(x):=\frac{G(x)}{G(0)}=\int_{0}^\infty e^{-xt}\overline{\mu}_G(dt), \qquad x \geq 0.\]

We start by generalizing the Definition \ref{defGRF}, as follows.

\begin{definition}\label{defGRFbis}
Let $\overline G:\mathbb 
 R_+ \to \mathbb{R}_+$ be a normalized completely monotone function.     
A spherical random field $\mathcal{T}(\boldsymbol x),$ $\boldsymbol{x}\in \mathbb S^2$, with power spectrum $\left\{ C_{\ell
}\right\} $, is a \textit{Generalized Grey} random filed (G-GSRF), with random global variance, if, for any choice of multipoles $\ell_1,...,\ell_d$, we have that%
\begin{equation}\label{defGRFeqbis}
\Phi _{\boldsymbol{a}_{\ell _{1\cdot}},...,\boldsymbol{a}_{\ell
_{d\cdot}}}(\boldsymbol{\xi}_{\ell_1\cdot},\boldsymbol{\xi}_{\ell_2\cdot},...,\boldsymbol{\xi}_{\ell_d\cdot})=\overline G\left(\frac{1}{2K}\sum_{i=1}^dC_{\ell_i}
\Vert\boldsymbol{\xi}_{\ell_i\cdot}\Vert^2\right),
\end{equation}%
where the normalizing constant $K$ coincides with the mean of the measure $\mu_G$, i.e. $K=-\overline G'(0)> 0$, {and $\boldsymbol \xi_i \in\mathbb{R}^{2\ell_i+1}$, $i=1,...,d$.}
\end{definition}

Thus, if we denote by $Z(\boldsymbol x),$ $\boldsymbol x \in \mathbb{S}^2$ the isotropic spherical Gaussian random field with mean $0$ and covariance function 
\[\rho_{i,j}:=Cov(Z(x_i),Z(x_j))=\sum_{lm}C_lY_{lm}(x_i)Y_{lm}(x_j)=\sum_l \frac{2l+1}{4\pi}C_lP_l(\langle x_i,x_j \rangle),\]
where $Z(\boldsymbol x)=\sum_{lm}a_{lm}Y_{lm}(\boldsymbol x),$ and $a_{lm} \sim N(0,C_l)$, the G-grey random field can be represented as 
\begin{equation}\mathcal{T}(\boldsymbol x)=(\mathbb EU)^{-1/2}\sqrt{U}Z(\boldsymbol x), \qquad \boldsymbol x \in \mathbb{S}^2,\label{deft}\end{equation} where $U$ is the r.v. with measure $\mu_G(\cdot)$ and, by assumption, $\mathbb EU\neq 0$ and finite.
Indeed, the joint characteristic function of the random vector $(\mathcal{T}(x_1),...,\mathcal{T}(x_d))$ is given by

\begin{eqnarray}
    \mathbb Ee^{i(\xi_1\mathcal{T}(x_1)+...+\xi_d \mathcal{T}(x_d))}&=&\int_0^\infty \mathbb Ee^{i\frac{u^{1/2}}{\sqrt{\mathbb E U}}(\xi_1\mathcal{T}(x_1)+...+\xi_d \mathcal{T}(x_d))}d\mu_G(u)\label{fi} \\
    &=& \int_0^\infty e^{-\frac{u}{2\mathbb E U}\sum_{i,j=1}^d\xi_i\xi_j\rho_{i,j}}d\mu_G(u)=\overline G\left(\frac{1}{2\mathbb E U}\sum_{i,j=1}^d\xi_i\xi_j\rho_{i,j}\right) \notag
\end{eqnarray}
which coincides with \eqref{defGRFeqbis} with $K=\mathbb{E}U$. 

We note that the tail behaviour of the harmonic coefficients is entirely encoded in the behaviour of the completely monotone function \(G\) near the origin.

Indeed, the function $
\overline G(s)=\mathbb E(e^{-sU})
$
is the Laplace transform of the mixing random variable \(U\); moreover, the spherical harmonic coefficients admit the representation
\[
a_{\ell m}
=
\sqrt{\frac{U}{\mathbb EU}}\, Z_{\ell m},
\]
where \(Z_{\ell m}\sim N(0,C_\ell)\).
Then, if we assume that
\[
1-\overline G(s)\sim c\, s^\alpha L(1/s),
\qquad s\downarrow 0,
\]
where \(0<\alpha<1\) and \(L(\cdot)\) is a slowly varying function at infinity,
the Tauberian theorem for Laplace transforms gives the following asymptotics for the tail of the distribution of $U$: 
\[
\mathbb P(U>u)
\sim
\frac{c}{\Gamma(1-\alpha)}
u^{-\alpha}L(u),
\qquad u\to\infty .
\]
It follows from standard results on
Gaussian scale mixtures that
\[
\mathbb P(|a_{\ell m}|>x)
\sim
\frac{c}{\Gamma(1-\alpha)}
\left(\frac{C_\ell}{\mathbb EU}\right)^\alpha
\mathbb E|Z|^{2\alpha}\,
x^{-2\alpha}L(x^2),
\qquad x\to\infty ,
\]
where \(Z\sim N(0,1)\).

Hence, the asymptotic behaviour of the completely monotone function
\(G\) at the origin completely determines the tail decay of the harmonic
coefficients. In particular, polynomial tails for the mixing distribution
yield polynomial tails for the spherical harmonic coefficients, while
light-tailed choices of \(G\) produce nearly Gaussian behaviour.

For any \(q\in\mathbb N\), the even-order moments of the spherical harmonic
coefficients satisfy
\[
\mathbb E[a_{\ell m}^{2q}]
=
C_\ell^q(2q-1)!!
\frac{\mathbb E(U^q)}{(\mathbb EU)^q}=
C_\ell^q(2q-1)!!
\frac{(-1)^q\overline G^{(q)}(0)}
{\left(-\overline G'(0)\right)^q}.
\]

In particular, the fourth-order moment is $$\frac{\mathbb E[a_{\ell m}^4]} {{3(\mathbb E[a_{\ell m}^2)^2}-1} = \frac{\overline G''(0)} {\left(\overline G'(0)\right)^2}-1.$$

Therefore, the completely monotone function \(G\) governs the distributional
features of the field, such as tails, moments, cumulants, and distance from the Gaussian behavior, whereas the
angular power spectrum \(\{C_\ell\}\) governs its geometric and sample-path
regularity. Indeed, as it is evident from the definition \eqref{deft}, the random field $\{\mathcal{T}(\boldsymbol{x})\}$ has H\"older continuous sample paths a.s. if and only if $\{Z(\boldsymbol{x})\}$ is H\"older continuous, which holds by assumption.


Another natural question concerns the behaviour of the field at high frequencies,
that is,  \(\ell\to\infty\). In the random global variance case we have that, for any $\ell$,
\[
a_{\ell m}
=
\sqrt{\frac{U}{\mathbb E U}}\,Z_{\ell m},
\qquad
Z_{\ell m}\sim N(0,C_\ell).
\]
Therefore, after normalization by the angular power spectrum,
\[
\frac{a_{\ell m}}{\sqrt{C_\ell}}
=
\sqrt{\frac{U}{\mathbb E U}}\,N_{\ell m},
\]
where \(N_{\ell m}\sim N(0,1)\).
Hence, 
the marginal law is independent of \(\ell\) and the non-Gaussianity
persists at all frequencies.

In Appendix B below, we review the choices of the function $\overline G$ in the definition of the G-GSRFs, which appear more relevant, discussing the corresponding fields and some of their most important properties.

\section{A space-time extension and the associated fractional evolution equation}
\label{sec:space-time}

The construction introduced in the previous sections admits a natural
space-time extension. Recall that the random variable $T_\beta$
entering the definition of the GSRF satisfies
\begin{equation}
T_\beta \overset{d}{=} \mathcal L_\beta(1),\label{tl}
\end{equation}
where $\{\mathcal L_\beta(t),t\geq0\}$ is the inverse of a $\beta$-stable
subordinator. This suggests replacing $T_\beta$ by the whole process
$\mathcal L_\beta(t)$ and viewing the GSRF introduced above as the 
marginal of a time-dependent random field.

In accordance with the real-valued harmonic representation adopted
throughout the paper, let
\[
\boldsymbol B_\ell(t)
=
\bigl(
B_{\ell,-\ell}(t),\ldots,B_{\ell,\ell}(t)
\bigr),
\qquad t\geq0,
\]
be independent $(2\ell+1)$-dimensional standard Brownian motions,
independent of the multivariate subordinator
$\boldsymbol {\mathcal{H}}=\{\mathcal H_\ell\}_{\ell\geq0}$ and of $\mathcal L_\beta$.
We define
\[
\boldsymbol a_{\ell,\cdot}(t)
:=
\sqrt{C_\ell}\,
\boldsymbol B_\ell
\bigl(\mathcal H_\ell(\mathcal L_\beta(t))\bigr),
\]
and introduce the space-time random field
\[
\mathcal T(x,t)
:=
\sum_{\ell\geq0}\sum_{m=-\ell}^{\ell}
a_{\ell m}(t)Y_{\ell m}(x),
\qquad
x\in\mathbb S^2,\quad t\geq0,
\]
whenever the series is well defined.

The variance-mixture representation given in Section \ref{sec:GreyInd} still holds, since, for any $t$,
conditionally on $\mathcal H_\ell(\mathcal L_\beta(t))$,
\[
\boldsymbol a_{\ell,\cdot}(t)
\overset{d}{=}
\sqrt{\mathcal H_\ell(\mathcal L_\beta(t))}\,
\boldsymbol Z_\ell\overset{d}{=}
\sqrt{\mathcal H_\ell(T_\beta)}\,\boldsymbol Z_\ell,
\]
where
\(
\boldsymbol Z_\ell
\sim N(0,C_\ell I_{2\ell+1}).
\)
Hence $\mathcal T(\cdot,1)$ has the same distribution as the GSRF
introduced in Section \ref{sec:GreyInd}.

We now derive the connection with
fractional evolution equations, where, as we will see, the relevant operator does not act on
the spatial variable $x\in\mathbb S^2$; rather, it acts on the
collection of harmonic coefficients. Its Fourier symbol is determined
by the joint Laplace exponent of the multivariate subordinator and
therefore preserves the dependence structure among different
multipoles. The inverse stable time change is responsible, in turn,
for the fractional derivative in time.

We then give the following definition.

\begin{definition}\label{defstGRF}
    
A spatio-temporal spherical random field with power spectrum $\left\{ C_{\ell
}\right\} $ is Grey if for any choice of multipoles $\ell_1,...,\ell_d$ we have that%
\begin{equation}\label{def:GeneralstGF}
\Phi _{\boldsymbol{a}_{\ell _{1\cdot}},...,\boldsymbol{a}_{\ell
_{d\cdot}}}(\boldsymbol{\xi}_{\ell_1\cdot},\boldsymbol{\xi}_{\ell_2\cdot},...,\boldsymbol{\xi}_{\ell_d\cdot},t)=E_\beta\left(-t^\beta\varphi_{\ell_1,...\ell_d}(C_{\ell_1}
\Vert\boldsymbol \xi_{\ell_1\cdot}\Vert^2/2,...,C_{\ell_d}
\Vert\boldsymbol \xi_{\ell_d\cdot}\Vert^2/2)\right),
\end{equation}%
for $\beta \in (0,1)$, $\xi_i \in\mathbb{R}$, $i=1,...,d$, and $\{\varphi_{\ell_1,...\ell_d}(.,...,.)\}_{\ell_1,...,\ell_d \in \mathbb{N}^d}$ a family of multivariate Laplace exponent functions.
\end{definition}

We now introduce the  operator $\mathcal A_{\boldsymbol\ell}$  defined by its symbol as
\[
\mathcal F
\left[
\mathcal A_{\boldsymbol\ell}f
\right]
(\boldsymbol\xi_{\ell_1},\ldots,
 \boldsymbol\xi_{\ell_d})
=
-
\varphi_{\ell_1,\ldots,\ell_d}
\left(
\frac{C_{\ell_1}}{2}
\|\boldsymbol\xi_{\ell_1}\|^2,
\ldots,
\frac{C_{\ell_d}}{2}
\|\boldsymbol\xi_{\ell_d}\|^2
\right)
\widehat f
(\boldsymbol\xi_{\ell_1},\ldots,
 \boldsymbol\xi_{\ell_d}),
\]
or, equivalently, in symbolic notation,
\begin{equation}
\mathcal A_{\boldsymbol\ell}
=
-
\varphi_{\ell_1,\ldots,\ell_d}
\left(
-\frac{C_{\ell_1}}{2}\Delta_{\boldsymbol a_{\ell_1}},
\ldots,
-\frac{C_{\ell_d}}{2}\Delta_{\boldsymbol a_{\ell_d}}
\right),\label{def:oper}
\end{equation}
where
\(
\Delta_{\boldsymbol a_\ell}
=
\sum_{m=-\ell}^{\ell}
\frac{\partial^2}{\partial a_{\ell m}^2}
\)
is the Euclidean Laplacian on the $(2\ell+1)$-dimensional coefficient
space associated with the $\ell$-th multipole.

Thus, we have the following result for the 
spatio-temporal GSRF.
\begin{proposition}
\label{prop:space-time-GSRF}
For fixed multipoles $\ell_1,\ldots,\ell_d$ the joint law $p_{\boldsymbol\ell}(\boldsymbol a,t)$ of the spherical harmonics of the spatio-temporal GSRF defined in \eqref{def:GeneralstGF} satisfies the Cauchy problem
\[
\begin{cases}
{}^C D_t^\beta
p_{\boldsymbol\ell}(\boldsymbol a,t)
=
\mathcal A_{\boldsymbol\ell}
p_{\boldsymbol\ell}(\boldsymbol a,t),
\\[1mm]
p_{\boldsymbol\ell}(\boldsymbol a,0)
=
\delta_{\boldsymbol 0}(\boldsymbol a),
\end{cases}
\]
where
\(
\boldsymbol a
=
(\boldsymbol a_{\ell_1},\ldots,
 \boldsymbol a_{\ell_d})
\in
\mathbb R^{D_{\boldsymbol\ell}},
\) for \(
D_{\boldsymbol\ell}
=
\sum_{j=1}^d(2\ell_j+1),
\) 
and $\mathcal A_{\boldsymbol{\ell}}$ is the operator defined in \eqref{def:oper}.
\end{proposition}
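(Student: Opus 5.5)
The natural route is to work entirely in Fourier variables. Write $\widehat p_{\boldsymbol\ell}(\boldsymbol\xi,t)$ for the characteristic function of the vector $\boldsymbol a=(\boldsymbol a_{\ell_1},\ldots,\boldsymbol a_{\ell_d})$ at time $t$. With the sign convention of \eqref{def:GeneralstGF}, this is the Fourier transform of $p_{\boldsymbol\ell}(\cdot,t)$. By definition,
\[
\widehat p_{\boldsymbol\ell}(\boldsymbol\xi,t)=E_\beta\bigl(-t^\beta\lambda(\boldsymbol\xi)\bigr),\qquad \lambda(\boldsymbol\xi):=\varphi_{\ell_1,\ldots,\ell_d}\Bigl(\tfrac{C_{\ell_1}}2\|\boldsymbol\xi_{\ell_1}\|^2,\ldots,\tfrac{C_{\ell_d}}2\|\boldsymbol\xi_{\ell_d}\|^2\Bigr)\geq 0 .
\]
The same expression also follows from the construction $\boldsymbol a_{\ell,\cdot}(t)=\sqrt{C_\ell}\boldsymbol B_\ell(\mathcal H_\ell(\mathcal L_\beta(t)))$. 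One conditions first on the subordinator, then on $\mathcal L_\beta(t)$, and uses $\mathbb E e^{-s\mathcal L_\beta(t)}=E_\beta(-st^\beta)$. This last identity comes from the density $t^{-\beta}M_\beta(xt^{-\beta})$ and Lemma \ref{theocm}.

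The first step is to apply the Caputo derivative in $t$ for each fixed $\boldsymbol\xi$. Since $\lambda(\boldsymbol\xi)$ is a nonnegative constant in $t$, the eigenfunction property \eqref{Caputo} gives directly
\[
{}^C D_t^\beta\,\widehat p_{\boldsymbol\ell}(\boldsymbol\xi,t)=-\lambda(\boldsymbol\xi)\,\widehat p_{\boldsymbol\ell}(\boldsymbol\xi,t).
\]
By the definition of $\mathcal A_{\boldsymbol\ell}$ through its symbol, the right-hand side is exactly $\mathcal F[\mathcal A_{\boldsymbol\ell}p_{\boldsymbol\ell}(\cdot,t)](\boldsymbol\xi)$. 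The initial condition follows from $E_\beta(0)=1$: we get $\widehat p_{\boldsymbol\ell}(\boldsymbol\xi,0)=1=\widehat{\delta_{\boldsymbol0}}(\boldsymbol\xi)$, which matches $\boldsymbol a(0)=\sqrt{C_\ell}\boldsymbol B_\ell(0)=\boldsymbol 0$.

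The remaining step, and the main obstacle, is to justify that the Caputo derivative commutes with the Fourier transform, so that the Fourier-side identity is equivalent to the stated equation. I would interpret the Cauchy problem in the distributional (tempered) sense in $\boldsymbol a$, or equivalently in the mild sense through its Fourier transform. Then it suffices to interchange the $t$-integral in the definition of ${}^C D_t^\beta$ with the $\boldsymbol\xi$-pairing against a Schwartz test function. This is a Fubini argument and needs a bound on $\partial_t\widehat p_{\boldsymbol\ell}$.

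For that bound I would use the identity $\frac{d}{dt}E_\beta(-\lambda t^\beta)=-\lambda t^{\beta-1}E_{\beta,\beta}(-\lambda t^\beta)$. Complete monotonicity of $E_\beta(-\cdot)$ gives $0\le E_{\beta,\beta}(-x)\le 1/\Gamma(\beta)$, so
\[
\bigl|\partial_t\widehat p_{\boldsymbol\ell}(\boldsymbol\xi,t)\bigr|\le \lambda(\boldsymbol\xi)\,t^{\beta-1}/\Gamma(\beta).
\]
Here $t^{\beta-1}(t-s)^{-\beta}$ is integrable on $(0,t)$. The symbol $\lambda(\boldsymbol\xi)$ grows at most polynomially (indeed at most linearly in $\|\boldsymbol\xi\|^2$), because a Bernstein function is dominated by $a+\langle\boldsymbol b,\cdot\rangle$ plus a term of sublinear growth. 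Hence the integrand is integrable against any Schwartz function, and Fubini applies. If $p_{\boldsymbol\ell}$ is a genuine density, for instance when $\lambda(\boldsymbol\xi)\to\infty$ fast enough, the equation then also holds pointwise.
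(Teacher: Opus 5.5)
Your proposal is correct and follows the paper's route: obtain the characteristic function $E_\beta\bigl(-t^\beta\lambda(\boldsymbol\xi)\bigr)$ by conditioning on the subordinator and then on $\mathcal L_\beta(t)$, apply the Caputo eigenfunction identity \eqref{Caputo} on the Fourier side, and read off the initial condition at $t=0$. Your additional step goes beyond the paper, which leaves the interchange of ${}^C D_t^\beta$ with the Fourier pairing implicit; you justify it by dominated convergence and Fubini, using the bound $|\partial_t\widehat p_{\boldsymbol\ell}(\boldsymbol\xi,t)|\le\lambda(\boldsymbol\xi)t^{\beta-1}/\Gamma(\beta)$ and the at most linear growth of $\lambda$ in $\|\boldsymbol\xi\|^2$.
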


\begin{proof}
Conditionally on
$\boldsymbol {\mathcal H}_{\boldsymbol\ell}(\mathcal L_\beta(t))$, the vectors
$\boldsymbol a_{\ell_j}(t)$ are independent centered Gaussian vectors
with covariance matrices
\(
C_{\ell_j}H_{\ell_j}(L_\beta(t))
I_{2\ell_j+1}.
\)
Therefore,
\[
\mathbb E\left[
\exp\left\{
i\sum_{j=1}^d
\left\langle
\boldsymbol\xi_{\ell_j},
\boldsymbol a_{\ell_j}(t)
\right\rangle
\right\}
\,\middle|\,
\boldsymbol{\mathcal H}_{\boldsymbol\ell}(\mathcal L_\beta(t))
\right]
=
\exp\left\{
-\frac12
\sum_{j=1}^d
C_{\ell_j}
\|\boldsymbol\xi_{\ell_j}\|^2
\mathcal H_{\ell_j}(\mathcal L_\beta(t))
\right\},
\]
so that, conditioning on $\mathcal L_\beta(t)$, we obtain
\[
\mathbb E\left[
\exp\left\{
i\sum_{j=1}^d
\left\langle
\boldsymbol\xi_{\ell_j},
\boldsymbol a_{\ell_j}(t)
\right\rangle
\right\}
\,\middle|\,
\mathcal L_\beta(t)
\right]
 =
\exp\left\{
-\mathcal L_\beta(t)
\varphi_{\ell_1,\ldots,\ell_d}
\left(
\frac{C_{\ell_1}}{2}
\|\boldsymbol\xi_{\ell_1}\|^2,
\ldots,
\frac{C_{\ell_d}}{2}
\|\boldsymbol\xi_{\ell_d}\|^2
\right)
\right\}.
\]
By recalling that
\(
\mathbb E e^{-\lambda \mathcal L_\beta(t)}
=
E_\beta(-\lambda t^\beta)\), for \( \lambda\geq0,
\)
the previous expression coincides with the joint characteristic function given in \eqref{def:GeneralstGF}.
Using the eigenfunction identity given in \eqref{Caputo},
we obtain
\[
{}^C D_t^\beta
\widehat p_{\boldsymbol\ell}
(\boldsymbol\xi_{\ell_1},\ldots,
  \boldsymbol\xi_{\ell_d};t)
=
-
\varphi_{\ell_1,\ldots,\ell_d}
\left(
\frac{C_{\ell_1}}{2}
\|\boldsymbol\xi_{\ell_1}\|^2,
\ldots,
\frac{C_{\ell_d}}{2}
\|\boldsymbol\xi_{\ell_d}\|^2
\right)
\widehat p_{\boldsymbol\ell}
(\boldsymbol\xi_{\ell_1},\ldots,
 \boldsymbol\xi_{\ell_d};t),
\]
while the initial condition follows from $\mathcal L_\beta(0)=0$, a.s.

\end{proof}

\begin{remark}
\label{rem:stable-fractional-operator}
The form of the operator $\mathcal A_{\boldsymbol\ell}$ becomes
particularly transparent when the multivariate subordinator is
strictly $\alpha$-stable, with $0<\alpha<1$. In this case its joint
Laplace exponent is homogeneous of order $\alpha$, namely
\[
\varphi_{\ell_1,\ldots,\ell_d}
(r\kappa_1,\ldots,r\kappa_d)
=
r^\alpha
\varphi_{\ell_1,\ldots,\ell_d}
(\kappa_1,\ldots,\kappa_d),
\qquad r>0.
\]
Hence the operator
given in \eqref{def:oper}
can be interpreted as a fractional non-local operator on the space
of harmonic coefficients.

A particularly simple example is obtained when the components of the
stable subordinator are completely dependent and
\[
\varphi_{\ell_1,\ldots,\ell_d}
(\kappa_1,\ldots,\kappa_d)
=
(\kappa_1+\cdots+\kappa_d)^\alpha.
\]
In this case,
\[
\mathcal A_{\boldsymbol\ell}
=
-
\left(
-\frac{1}{2}
\sum_{j=1}^d
C_{\ell_j}\Delta_{\boldsymbol a_{\ell_j}}
\right)^\alpha.
\]
Thus, in this case, the space-time extension involves two distinct
fractional mechanisms: the parameter $\beta$ determines the order of
the Caputo time derivative, whereas $\alpha$ determines the order of
the fractional operator acting on the harmonic-coefficient space.

At the opposite extreme, if the components
$H_{\ell_1},\ldots,H_{\ell_d}$ are independent strictly
$\alpha$-stable subordinators with
\[
\varphi_{\ell_1,\ldots,\ell_d}
(\kappa_1,\ldots,\kappa_d)
=
\sum_{j=1}^d c_{\ell_j}\kappa_j^\alpha,
\qquad c_{\ell_j}>0,
\]
then the operator separates as
\[
\mathcal A_{\boldsymbol\ell}
=
-
\sum_{j=1}^d
c_{\ell_j}
\left(
-\frac{C_{\ell_j}}{2}
\Delta_{\boldsymbol a_{\ell_j}}
\right)^\alpha.
\]

More generally, for a dependent multivariate stable subordinator, the
joint Laplace exponent need not decompose into a sum of its marginal
Laplace exponents. 
\end{remark}

\begin{remark}
  Note that our approach here should not be confused with the construction of time-evolving fractional spherical random fields introduced in \cite{DOV} and further generalized in \cite{DOS}, where Gaussian fields are introduced as the solutions of fractional heat equations. In particular, in \cite{DOV}, Mittag-Leffler functions then emerge in the analytic expression of their angular power spectrum. This is entirely different from our purpose here, where we aim at isotropic Grey, i.e. non-Gaussian, fields.
\end{remark}

\medskip
The previous construction can be extended beyond the
Mittag-Leffler case by considering the (normalized) completely monotone function $\overline G(\cdot)$ introduced in Section \ref{sec:GGSRF}. This leads naturally to convolution-type
derivatives in the sense introduced in \cite{KOC}.
In particular, let us assume that
\begin{equation}
\overline G_f(\lambda)
:=\mathbb E[e^{-\lambda \Upsilon^f(1)}],
\qquad \lambda\geq 0,\label{def:new-G}
\end{equation}
where $\Upsilon^f$ is the inverse of a strictly increasing
subordinator, such that $\Upsilon^f(0)=0$ a.s., and with Laplace exponent
\[
f(s)=bs+\int_0^\infty(1-e^{-sr})\,\nu_f(dr),
\qquad f(0)=0.
\]

The assumption on $\overline G$ given in \eqref{def:new-G} is not
implied by complete monotonicity alone.
Therefore, the following result applies to the
subclass of generalized Grey spherical random
fields introduced in Section \ref{sec:GGSRF} whose mixing variable can
be represented as the inverse of a subordinator
at time one.

Let now $\overline{\nu}_f(t):=\nu_f((t,\infty))$ and introduce the
convolution-type derivative (\cite{KOC})
\begin{equation}
\mathbb D_t^f u(t)
=
b\,u'(t)+
\frac{d}{dt}\int_0^t
\overline \nu_f(t-s)\bigl(u(s)-u(0)\bigr)\,ds,\qquad t >0.
\label{eq:Kochubei_derivative}
\end{equation}
In the special case $f(s)=s^\beta$,
$0<\beta<1$, we get \(
\overline G_f(\lambda)=E_\beta(-\lambda).
\) and the operator $\mathbb D_t^f$
reduces to the Caputo derivative of order $\beta$.

Let $\{B_\ell\}_{\ell\geq 0}$ be independent
standard Brownian motions of dimensions $2\ell+1$,
independent of $\Upsilon^f$, and define
\[
a_{\ell,\cdot}^{\,f}(t)
:=
\sqrt{C_\ell}\,
B_\ell\left(\frac{\Upsilon^f(t)}{K}\right),
\]
then the joint characteristic
function of \begin{equation}\label{vectora}
\bigl(
a_{\ell_1,\cdot}^{\,f}(t),\ldots,
a_{\ell_d,\cdot}^{\,f}(t)
\bigr)
\end{equation}  coincides, for $t=1$, with that of the generalized
Grey spherical random field in Definition \ref{defGRFbis}, given in \eqref{defGRFeqbis}. In accordance with the normalization used there, 
we set
\[
K=-\overline G_f'(0)
=\mathbb E[\Upsilon^f(1)]\in(0,\infty).
\]
Let $\{B_\ell\}_{\ell\geq0}$ be independent standard
Brownian motions in $\mathbb R^{2\ell+1}$,
independent of $\Upsilon^f$, and define
\[
a_{\ell,\cdot}^f(t)
:=\sqrt{C_\ell}\,
B_\ell\left(\frac{\Upsilon^f(t)}{K}\right).
\]
Then, at $t=1$, the joint characteristic function
of $(a_{\ell_1,\cdot}^f(1),\ldots,
a_{\ell_d,\cdot}^f(1))$ is
\[
\overline G_f\left(
\frac{1}{2K}\sum_{j=1}^d
C_{\ell_j}\|\xi_{\ell_j}\|^2
\right),
\]
which coincides with (7.1).

\begin{proposition}
For fixed multipoles $\ell_1,\ldots,\ell_d$,
let $p_{\boldsymbol\ell}^{\,f}(\mathbf a,t)$
denote the joint law of \eqref{vectora}. 
Then $p_{\boldsymbol\ell}^{\,f}$ satisfies
the generalized Cauchy problem
\begin{equation}
\begin{cases}
\displaystyle
\mathbb D_t^f
p_{\boldsymbol\ell}^{\,f}(\mathbf a,t)
=
\frac{1}{2K}
\sum_{j=1}^d C_{\ell_j}
\Delta_{a_{\ell_j}}
p_{\boldsymbol\ell}^{\,f}(\mathbf a,t),
\\[3mm]
p_{\boldsymbol\ell}^{\,f}(\mathbf a,0)
=\delta_0(\mathbf a).
\end{cases}
\label{eq:generalized_GSRF_evolution}
\end{equation}
\end{proposition}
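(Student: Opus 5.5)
The plan mirrors the proof of Proposition \ref{prop:space-time-GSRF}, working in the Fourier variable. The eigenfunction identity \eqref{Caputo} is replaced by its analogue for the Kochubei derivative \eqref{eq:Kochubei_derivative}.

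\textbf{Step 1 (the characteristic function at time $t$).} Conditionally on $\Upsilon^f(t)$, the vectors $a^f_{\ell_j,\cdot}(t)$ are independent centred Gaussian vectors with covariance $C_{\ell_j}\Upsilon^f(t)K^{-1}I_{2\ell_j+1}$. Hence
\[
\widehat p^{\,f}_{\boldsymbol\ell}(\boldsymbol\xi;t)=\mathbb E\left[e^{-\lambda\Upsilon^f(t)}\right]=:u_\lambda(t),\qquad \lambda:=\frac{1}{2K}\sum_{j=1}^dC_{\ell_j}\|\boldsymbol\xi_{\ell_j}\|^2 .
\]
Since $\Upsilon^f(0)=0$ a.s., we have $u_\lambda(0)=1$. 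This gives the initial condition $p^f_{\boldsymbol\ell}(\mathbf a,0)=\delta_0(\mathbf a)$ on the Fourier side.

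\textbf{Step 2 (eigenfunction property).} The central claim is
\[
\mathbb D^f_tu_\lambda(t)=-\lambda u_\lambda(t),\qquad t>0 .
\]
I would prove it through Laplace transforms in $t$, which is the standard route in \cite{KOC}. Denote by $\sigma^f$ the subordinator whose inverse is $\Upsilon^f$, so that $\{\Upsilon^f(t)>x\}=\{\sigma^f(x)<t\}$ up to null sets. Fubini then gives
\[
\int_0^\infty e^{-st}\,\mathbb P(\Upsilon^f(t)\in dx)\,dt=\frac{f(s)}{s}\,e^{-xf(s)}\,dx,
\]
and therefore
\[
\tilde u_\lambda(s):=\int_0^\infty e^{-st}u_\lambda(t)\,dt=\frac{f(s)}{s\,(f(s)+\lambda)} .
\]
On the other side, the Laplace transform of $\overline\nu_f$ is $(f(s)-bs)/s$. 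It follows that
\[
\widetilde{\mathbb D^f_tu}(s)=b\left(s\tilde u-u(0)\right)+s\,\frac{f(s)-bs}{s}\left(\tilde u-\frac{u(0)}{s}\right)=f(s)\tilde u(s)-\frac{f(s)}{s}u(0).
\]
Substituting $\tilde u_\lambda$ and $u(0)=1$ gives
\[
\widetilde{\mathbb D^f_tu_\lambda}(s)=\frac{f(s)}{s}\left(\frac{f(s)}{f(s)+\lambda}-1\right)=-\lambda\tilde u_\lambda(s).
\]
Uniqueness of the Laplace transform, together with continuity of $u_\lambda$ in $t$, then yields the claimed identity.

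\textbf{Step 3 (inverting the Fourier transform).} The multiplier $-\lambda$ is exactly the symbol of the operator $\frac{1}{2K}\sum_jC_{\ell_j}\Delta_{a_{\ell_j}}$, since $\mathcal F[\Delta f]=-\|\boldsymbol\xi\|^2\widehat f$. Inverting the Fourier transform in the distributional sense therefore gives the equation in \eqref{eq:generalized_GSRF_evolution}. This requires exchanging $\mathbb D^f_t$ with the Fourier transform, which is justified because $|u_\lambda|\le1$ and the kernel $\overline\nu_f$ is locally integrable.

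\textbf{Expected main obstacle.} I expect Step 2 to be the hardest part. Two points need care there. First, the Laplace transform of $u_\lambda$ must be identified rigorously; this uses strict increase of $\sigma^f$, so that its inverse $\Upsilon^f$ is continuous. Second, one must check that the convolution term in \eqref{eq:Kochubei_derivative} is differentiable, so that the derivative $\frac{d}{dt}$ outside the integral makes sense classically rather than only in the Laplace domain. If the classical sense proves delicate, I would state the equation in the mild, Laplace-transformed sense, as is customary in \cite{KOC}.
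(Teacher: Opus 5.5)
Your proposal is correct and follows the same route as the paper. You condition on $\Upsilon^f(t)$ to reduce the characteristic function to $\mathbb E e^{-\lambda\Upsilon^f(t)}$, apply the eigenfunction identity $\mathbb D^f_t u_\lambda=-\lambda u_\lambda$ with $u_\lambda(0)=1$, and invert the Fourier transform; the only difference is that you derive the Laplace transform $f(s)/(s(f(s)+\lambda))$ and the eigenfunction identity yourself, and your computation is correct, whereas the paper simply recalls both facts from Kochubei.
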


\begin{proof}
Conditionally on $\Upsilon^f(t)$, the vectors
$a_{\ell_j,\cdot}^{\,f}(t)$ are independent
centered Gaussian vectors with covariance
matrices
\[
\frac{C_{\ell_j}\Upsilon^f(t)}{K}I_{2\ell_j+1}.
\]
Consequently, their joint characteristic function is
\[
\widehat p_{\boldsymbol\ell}^{\,f}
(\boldsymbol\xi,t)
=
\mathbb E\exp\left\{
-\frac{\Upsilon^f(t)}{2K}
\sum_{j=1}^d
C_{\ell_j}\|\xi_{\ell_j}\|^2
\right\}.
\]
We recall that, the Laplace transform w.r.t. $t$ of
\[
q_f(t,\lambda)
:=\mathbb E[e^{-\lambda \Upsilon^f(t)}],
\]
is given by
\[
\int_0^\infty e^{-st}q_f(t,\lambda)\,dt
=
\frac{f(s)}{s(f(s)+\lambda)},\qquad s>0,
\]
and that the function $q_f(\cdot,\lambda)$ satisfies
\[
\mathbb D_t^f q_f(t,\lambda)
=-\lambda q_f(t,\lambda),
\qquad q_f(0,\lambda)=1.
\]
Applying this identity with
\(
\lambda=
\sum_{j=1}^d
C_{\ell_j}\|\xi_{\ell_j}\|^2/(2K),
\)
we obtain
\[
\mathbb D_t^f
\widehat p_{\boldsymbol\ell}^{\,f}
(\boldsymbol\xi,t)
=
-\frac{1}{2K}
\sum_{j=1}^d
C_{\ell_j}\|\xi_{\ell_j}\|^2
\widehat p_{\boldsymbol\ell}^{\,f}
(\boldsymbol\xi,t),
\]
which yields
\eqref{eq:generalized_GSRF_evolution}, by inverting the Fourier transform.

\end{proof}

\section{Acknowledgements} L.B. and D.M.  are associated to INdAM/GNAMPA (Istituto Nazionale di Alta Matematica “Francesco Severi”).
L.B. acknowledges financial support under NRRP, by the Italian Ministry of University and Research, funded by NextGenerationEU – 202277N5H9 - CUP: D53D23005670006 - Grant No. 973, 2023.
D.M. is supported by the MUR Departement of Excellence Programme 2023-2027 MatMod@Tov (CUP: E83C23000330006).
N.L.  would  like to thank the Australian Research Council for support through Discovery Grant DP220101680 and the Croatian Science Foundation (HRZZ) for support through the grant “Scaling in Stochastic Models”(IP-2022-10 8081).

\section{Appendices}\label{sec:appB}

\subsection{Appendix A: Relevant special cases of GSRFs with random angular power spectrum}\label{appA}
We describe here some relevant special cases of the GSRF, with random angular power spectrum, introduced in Subsection \ref{sectangul}, i.e. obtained by specifying the
multivariate subordinator with independent components. In these particular settings, it is possible to derive explicit
forms of the results given in theorems \ref{mixmom} and \ref{theoconv}. 

\begin{enumerate}
    \item \textbf{Multivariate stable subordinator with independent components and variable order}: in this case the Bernstein functions read $\varphi_i(\kappa)=\kappa^{\nu_i}$, for $\nu_i \in (0,1)$, for $i=1,..,d$, and thus \begin{equation}\Psi_\beta(\kappa_1,..,\kappa_d)=E_\beta(-\kappa_1^{\nu_1}-...-\kappa_d^{\nu_d}).\label{multi}
    \end{equation}

    Therefore, the GSRF has the following characteristic function of
any vector of spherical harmonic coefficients %
\begin{equation}
\Phi _{a_{\ell _{1}m_{1}},...,a_{\ell
_{d}m_{d}}}(\xi_{1},\xi_{2},...,\xi_{d})=E_\beta\left(-\sum_{i=1}^d\frac{C_{\ell_i}^{\nu_i}\xi_i^{2\nu_i}}{2^{\nu_i}}\right),
\end{equation}%
$\xi_i \in\mathbb{R}$, $i=1,...,d$, and, moreover,
\begin{equation*}\mathbb{E}e^{i\sum_{\ell=1}^L\langle\sqrt{\mathcal{H}_\ell(T_\beta)}\boldsymbol{Z}_\ell,\boldsymbol{\xi}_\ell\rangle}=E_\beta\left(-\sum_{\ell=1}^L\frac{C^{\nu_\ell}_{\ell}\Vert \boldsymbol{\xi}_\ell\Vert^{2\nu_\ell}}{2^{\nu_\ell}}\right),
\end{equation*}%
for $\beta \in (0,1)$, $\boldsymbol{\xi}_\ell \in\mathbb{R}^d$, $l=1,...,L$.

 In this case, the moments of any (integer) order of the r.v. $(\mathcal{H}_1(T_\beta),...,\mathcal{H}_d(T_\beta))$ are infinite, since the same holds for the multi-stable subordinator. As a consequence, $\mathbb{E}(a^{r_1}_{\ell_1 m_1}\cdots a^{r_d}_{\ell_d m_d})$ are infinite, for any $r_1,...,r_d>1$, while, for $m_i \neq m_j,$ \[\mathbb{E}(a_{\ell_1 m_1}\cdots a_{\ell_d m_d})=\mathbb{E}(\mathcal{H}^{1/2}_{\ell_1}(T_\beta)\cdots \mathcal{H}^{1/2}_{\ell_d}(T_\beta)) \mathbb{E}(Z_{\ell_1 m_1}\cdots Z_{\ell_d m_d})=0,\] if $\nu_i>1/2,$ for $i=1,...,d$. Indeed, under the last assumption,  \begin{eqnarray}
      \mathbb{E}(\mathcal{H}^{1/2}_{\ell_1}(T_\beta)\cdots \mathcal{H}^{1/2}_{\ell_d}(T_\beta))&=&\frac{\prod_{i=1}^d\Gamma(1-(2\nu_i)^{-1})}{\pi^{d/2}}\int_0^\infty t^{\frac{1}{2\nu_1}+...+\frac{1}{2\nu_d}}M_\beta(t)dt \notag\\
      &=&\frac{\prod_{i=1}^d\Gamma(1-(2\nu_i)^{-1})}{\beta \pi^{d/2}}\frac{\Gamma(\sum_{i=1}^d (2\nu_i)^{-1})}{\Gamma(\beta\sum_{i=1}^d (2\nu_i)^{-1})},\notag
 \end{eqnarray}
 by recalling the fractional moments of the stable subordinator (see \cite{WOL}) together with formula \eqref{mom}.

\item \textbf{Multivariate stable subordinator with independent components and variable scale}: in this case the Bernstein functions read $\varphi_i(\kappa)=c_i\kappa^{\nu}$, for $\nu \in (0,1)$, $c_i \in \mathbb R_+$, for $i=1,...,d$, and thus \begin{equation}\Psi_\beta(\kappa_1,..,\kappa_d)=E_\beta(-c_1\kappa_1^{\nu}-...-c_d\kappa_d^{\nu}).\label{multibis}
    \end{equation}
Note that, in this case, the convergence result given in Theorem \ref{theoconv} holds by choosing the sequence $\{r_\ell\}_{\ell >0}$ with $r_\ell=c_\ell^{1/\nu}$,
so that $\lim_{\ell \to \infty}\varphi_\ell(s/r_\ell)=\lim_{\ell \to \infty}c_\ell s^\nu/c_\ell =s^\nu$, for $s>0$. Thus, the convergence \eqref{convfi2} holds here for $V$ distributed as a $\nu$-stable r.v. with $\mathbb E e^{-sV}=e^{-s^\nu}.$

     \item \textbf{Multivariate inverse Gaussian subordinator with independent components}: this case is obtained by considering $\varphi_i(\kappa)=\sqrt{2\kappa+\gamma_i^2}-\gamma_i$, where $\gamma_i >0$, for $i=1,...,d.$ The mixed moments of the spherical harmonic coefficients can be evaluated by recalling Theorem \ref{mixmom} and considering that 
    $\varphi_i^{(j)}(0)=(-1)^{j-1 }(2j-3)!!\gamma_i^{1-2j}$ (where $(-1)!!=1$, by convention):
   \begin{eqnarray}&&\mathbb{E[}a_{\ell _{1}m_{1}}^{2n_1}\cdots a_{\ell _{d}m_{d}}^{2n_d}] \notag\\
   &=&\frac{(-1)^{\sum_{i=1}^dn_i}C^{n_1}_{\ell_1}\cdots C^{n_d}_{\ell_d}}{\gamma^{2n_1}_{\ell_1}\cdots \gamma^{2n_d}_{\ell_d}}\sum_{\boldsymbol{k}^{(1)}}\cdots \sum_{\boldsymbol{k}^{(i)}}A_{n_1,...,n_d}(\boldsymbol{k}^{(1)},...,\boldsymbol{k}^{(d)})\prod_{i=1}^d
\prod_{j_i=1}^{n_i}\left(\gamma_{\ell_i}(2j_i-3)!!\right)^{k^{(i)}_{j_i}},\notag\end{eqnarray}
so that, for example, 
     
\begin{equation}
\mathbb{E}(a^2_{\ell_1 m_1}a^2_{\ell_2 m_2})=\frac{2C_{\ell_1}C_{\ell_2}}{\gamma_{\ell_1}\gamma_{\ell_2}\Gamma(2\beta+1)} .\label{invgau}
\end{equation}

Here, the convergence of Theorem \ref{theoconv} holds, with $V\overset{d}{=}aT_\beta$, by choosing the sequence $\{r_\ell\}_{\ell >0}$ such that $\lim_{\ell \to \infty}r_\ell \gamma_\ell=1/a>0$ and $\lim_{\ell \to \infty}r^2_\ell \gamma^3_\ell=0$.

 
\item \textbf{Multivariate compound Poisson subordinator with independent components}: this case is obtained by considering $\varphi_i(\kappa)=\lambda_j(1-\Phi_i(\kappa))$, where, for any $i=1,...,d,$ $\lambda_i>0$ and $\Phi_i(\kappa):=\mathbb{E}e^{-\kappa U^{(i)}}$ is the Laplace transform of the distribution of the (i.i.d. and almost surely positive) addends in the definition of the $i$-th subordinator, i.e. $\mathcal{H}_i(t):=\sum_{k=1}^{N_i(t)}U_k^{(i)}$, under the assumption that the $i$-th Poisson process $N_i(t)$ (with parameter $\lambda_i$) is independent of the others and of the summands $U^{(i)}$, $i=1,...,d.$ 

By applying again Theorem \ref{mixmom} and considering that, in this case,  $\varphi_i^{(j)}(0)=-\lambda_i\Phi_i^{(j)}(0)=-\lambda_i\mathbb{E}(U^{(i)})^j$, we get, for example,

\[
\mathbb{E}(a^2_{\ell_1 m_1}a^2_{\ell_2 m_2})=\frac{2C_{\ell_1}C_{\ell_2}\lambda_{\ell_1}\lambda_{\ell_2}}{\Gamma(2\beta+1)}\mathbb{E}U^{(\ell_1)} \mathbb{E}U^{(\ell_2)}.\notag
\]
  \item \textbf{Multivariate Gamma subordinator with independent components}: this case is obtained by considering $\varphi_i(\kappa)=\gamma_i\log(1+\kappa/b_i)$, where $\gamma_i,b_i >0$, for $i=1,...,d.$ The moments can be evaluated, by recalling that for a Gamma univariate subordinator (with parameters $\gamma,b >0$) $\mathcal{H}$, we have that $\mathbb{E}\mathcal{H}(t)^{r}=(\gamma t)_r/b^r, $ where $(x)_n=\Gamma(x+n)/\Gamma(x)$, for $n \in \mathbb{N}, x \in \mathbb R\setminus\left\{0, -1,-2,... \right\}$, is the Pochhammer symbol:
    \begin{eqnarray}
&&\mathbb{E}\left(\mathcal{H}_1(T_\beta)^{r_1}\cdots\mathcal{H}_d(T_\beta)^{r_d}\right)=\frac{1}{\prod_{j=1}^d b_j^{r_j}}\int_0^\infty (\gamma_1t)_{r_1}\cdots(\gamma_dt)_{r_d}M_\beta(t)dt \notag \\
    &=&\frac{1}{\prod_{j=1}^d b_j^{r_j}}\sum_{k_1=0}^{r_1}\cdots\sum_{k_d=0}^{r_d}\gamma_1^{k_1}c(r_1,k_1)\cdots \gamma_d^{k_d}c(r_d,k_d)\int_0^\infty t^{k_1+...+k_d}M_\beta(t)dt \notag \\
    &=& \left[\text{by }\eqref{mom}\right] \label{stir} \\
    &=&\frac{1}{\prod_{j=1}^d b_j^{r_j}}\sum_{k_1=0}^{r_1}\cdots\sum_{k_d=0}^{r_d}\gamma_1^{k_1}c(r_1,k_1)\cdots \gamma_d^{k_d}c(r_d,k_d)\frac{(k_1+...+k_d)!}{\Gamma(\beta(k_1+...+k_d)+1)}\notag
    \end{eqnarray}
    where $c(n,k)$, $n,k \in \mathbb{N}, $ $k\leq  n$, are the unsigned Stirling numbers of the first kind. In the second step, we have applied the expansion of the Pochhammer symbol, i.e. $(x)_n=\sum_{k=0}^n c(n,k) x^k$
    (see \cite{ADA}).

    Therefore, we have that, for $d=2$, $r_1=r_2=1$, and $m_1 \neq m_2,$ 
    \begin{eqnarray}
\mathbb{E}(a^2_{\ell_1 m_1}a^2_{\ell_2 m_2})&=&\frac{C_{\ell_1}C_{\ell_2}}{b_{\ell_1}b_{\ell_2}}\left[ c^2(1,0)-\frac{c(1,0)c(1,1)(\gamma_{\ell_1}+\gamma_{\ell_2})}{\Gamma(\beta+1)}+\frac{2\gamma_{\ell_1}\gamma_{\ell_2}c^2(1,1)}{\Gamma(2\beta+1)} \right] \notag\\
&=& \frac{2\gamma_{\ell_1}\gamma_{\ell_2}C_{\ell_1}C_{\ell_2}}{b_{\ell_1}b_{\ell_2}\Gamma(2\beta+1)},    \label{stir2}
    \end{eqnarray}
since $c(1,0)=0$ and $c(1,1)=1.$ Formula \eqref{stir2} can be obtained, alternatively, from Theorem \ref{mixmom}, by considering that $\varphi_i^{(j)}(0)=(-1)^{j-1}\gamma_i/b_i^j,$ for any $i=1,...,d$, $j \in \mathbb N,$ and that $A_{1,...,1}(\boldsymbol{k}^{(1)},...,\boldsymbol{k}^{(d)})=d!(-1)^d/\Gamma(d\beta+1)$.

Note that, in this case, the convergence result given in Theorem \ref{theoconv} holds by choosing the sequence $\{r_\ell\}_{\ell >0}$ such that
\[\frac{\gamma_\ell}{r_\ell b_\ell}\to a>0 \qquad \text{and} \qquad \frac{\gamma_\ell}{r^2_\ell b^2_\ell}\to 0, \quad \ell \to \infty,\]
so that $\lim_{\ell \to \infty}\varphi_\ell(s/r_\ell)=\lim_{\ell \to \infty}\gamma_\ell \log(1+s/r_\ell b_\ell)=as$, for $s,a>0$. Thus, the convergence in \eqref{convfi2} holds with $V\overset{d}{=}aT_\beta$.
\end{enumerate}

\subsection{Appendix B: Relevant special cases of G-GSRF's with random global variance }

We review the choices of the function $\overline G$ in the definition of the G-GSRFs introduced in Section \ref{sec:GGSRF} which appear more relevant, discussing some of their most important properties.

\vspace{0.5cm}
\textbf{I) Examples of infinitely divisible G-GSRF}

Important examples arise when the mixing distribution belongs to classical
infinitely divisible families. In these cases, the resulting G-GSRF can
be interpreted as spherical analogues of well-known Gaussian variance mixtures.

These examples show that the class of G-GSRF's naturally unifies
Gaussian, variance-gamma, generalized hyperbolic and stable-type spherical
random fields within a common framework of Gaussian scale mixtures driven by
completely monotone functions.

\begin{enumerate}
    \item \textbf{Stable case}

If \(U\) is a positive stable random variable with index
\(\alpha\in(0,1)\), then
$
\overline G(s)=e^{-cs^\alpha},
$
and therefore
\[
\varphi_{\ell m}(\xi)
=
\exp\left\{
-c
\left(
\frac{C_\ell\xi^2}{2\mathbb EU}
\right)^\alpha
\right\}.
\]

In this case the spherical harmonic coefficients belong to the class of
elliptically symmetric stable distributions and possess power-law tails.

\item \textbf{Gamma case}

If the mixing variable \(U\) has Gamma distribution with parameters
\((\alpha,\lambda)\), then
\[
\overline G(s)
=
\left(1+\frac{s}{\lambda}\right)^{-\alpha}.
\]

Hence the characteristic function of the harmonic coefficients becomes
\[
\varphi_{\ell m}(\xi)
=
\left(
1+\frac{C_\ell\xi^2}
{2\lambda \mathbb EU}
\right)^{-\alpha}.
\]

The corresponding coefficients belong to the class of Gaussian variance
mixtures of variance-gamma type and possess polynomial tails.



\item
\textbf{Generalized inverse Gaussian case}

Let us recall the definition of the modified Bessel function of the second kind, i.e. \[
K_\lambda(x)
:=
\frac{1}{2}
\int_0^\infty
v^{\lambda-1}
\exp\left\{
-\frac{x}{2}
\left(v+\frac{1}{v}\right)
\right\}\,dv,
\qquad x>0,
\]
and let us consider the completely monotone function
\[
G(s)
=
\mathbb{E}e^{-sU}
=
\left(
\frac{\gamma^2}{\gamma^2+2s}
\right)^{\lambda/2}
\frac{
K_\lambda\left(
\delta\sqrt{\gamma^2+2s}
\right)
}{
K_\lambda(\delta\gamma)
},
\qquad s\geq0.
\]

Then $U$ follows a generalized inverse Gaussian distribution with
parameters $\lambda\in\mathbb{R}$ and $\delta,\gamma>0$, and with density
\[
f_U(u)
=
\frac{(\gamma/\delta)^\lambda}
{2K_\lambda(\delta\gamma)}
u^{\lambda-1}
\exp\left\{
-\frac{1}{2}
\left(\gamma^2u+\frac{\delta^2}{u}\right)
\right\},
\qquad u>0,
\]

Moreover,
\[
\mathbb{E}U
=
\frac{\delta}{\gamma}
\frac{K_{\lambda+1}(\delta\gamma)}
{K_\lambda(\delta\gamma)}.
\]

Therefore, according to \eqref{fi}, the characteristic function of each
spherical harmonic coefficient is
\[
\varphi_{\ell m}(\xi)
=
\left(
\frac{\gamma^2}
{\gamma^2+C_\ell\xi^2/
\mu_{\lambda,\delta,\gamma}}
\right)^{\lambda/2}
\frac{
K_\lambda\left(
\delta
\sqrt{
\gamma^2+C_\ell\xi^2/
\mu_{\lambda,\delta,\gamma}
}
\right)
}{
K_\lambda(\delta\gamma)
},
\]
where \(
\mu_{\lambda,\delta,\gamma}
:=
\delta K_{\lambda+1}(\delta\gamma)/\gamma
K_\lambda(\delta\gamma).
\)
Thus, the spherical harmonic coefficients are Gaussian variance
mixtures with generalized inverse Gaussian mixing distribution and
belong to the generalized hyperbolic family.

The inverse Gaussian case is recovered by taking
$\lambda=1/2$. Indeed,
\(
K_{1/2}(x)
=
\sqrt{\pi}e^{-x}/\sqrt{2x}
\)
and
\(
\mathbb{E}U=\delta/\gamma.
\)
Consequently, the characteristic function of the spherical harmonic
coefficients becomes
\[
\varphi_{\ell m}(\xi)
=
\exp\left\{
\delta\gamma
-
\delta
\sqrt{
\gamma^2+
\frac{\gamma C_\ell}{\delta}\xi^2
}
\right\}.
\]

\item 
\textbf{Logistic (hyperbolic-sine) case}

Let $V$ be a positive random variable with density
\[
f_V(v)
=
2\sum_{n=1}^{\infty}
(-1)^{n-1}n^2e^{-n^2v}
\mathds{1}_{(0,\infty)}(v).
\]
Equivalently, $V=U^2/2$, where $U$ has density
\[
f_U(u)
=
2u\sum_{n=1}^{\infty}
(-1)^{n-1}n^2e^{-n^2u^2/2}
\mathds{1}_{(0,\infty)}(u).
\]
The Laplace transform of $V$ is
\[
G(s)
=
\mathbb{E}e^{-sV}
=
\frac{\pi\sqrt{s}}{\sinh(\pi\sqrt{s})},
\qquad s\geq0.
\]
Indeed, using the infinite product representation of the hyperbolic
sine,
\[
\frac{\sinh(\pi\sqrt{s})}{\pi\sqrt{s}}
=
\prod_{n=1}^{\infty}
\left(1+\frac{s}{n^2}\right),
\]
we obtain
\[
G(s)
=
\prod_{n=1}^{\infty}
\left(1+\frac{s}{n^2}\right)^{-1}.
\]
In particular, $V$ is infinitely divisible. Moreover,
\(
\mathbb{E}V
=
-G'(0)
=\pi^2/6.
\)
Therefore, according to \eqref{fi}, the characteristic function of each
spherical harmonic coefficient is
\[
\varphi_{\ell m}(\xi)
=
G\left(
\frac{C_\ell\xi^2}{2\mathbb{E}V}
\right)=
\frac{\sqrt{3C_\ell}\,|\xi|}
{\sinh\left(\sqrt{3C_\ell}\,|\xi|\right)}.
\]
Thus, in this case the characteristic function of the spherical
harmonic coefficients admits a simple explicit representation in
terms of the hyperbolic sine function.

\end{enumerate}

\textbf{II) Examples of non-infinitely divisible G-GSRFs}
\begin{enumerate} \item \textbf{Two-parameters Mittag-Leffler case}

If we consider the function $E_{\beta,\alpha}(z):=\sum_{n=0}^\infty \frac{z^n}{\Gamma(\beta n+\alpha)}$, for $z \in \mathbb C,$ $\alpha, \beta >0,$ it is well-known that $E_{\beta,\alpha}(-x)$, $x>0,$ is completely monotone on $[0,\infty)$ if and only if $0<\beta\leq 1$, $\alpha \geq \beta$  (see \cite{SCH2}). Moreover, in the Laplace-Stieltjes representation, 
\begin{equation}\overline G(x)=\Gamma(\alpha)E_{\beta,\alpha}(-x)=\int_0^\infty e^{-xu}d\overline \mu_{\beta, \alpha}(u), \qquad x \geq 0, \label{g}\end{equation}
and the underlying probability measure $\overline \mu_{\beta,\alpha}$ on $[0,\infty)$ has the following density function
\begin{equation}f_{\beta,\alpha}(x)=\Gamma(\alpha)\sum_{k=0}^\infty \frac{(-x)^k}{k!\Gamma(-\beta k+\alpha-\beta)}=\Gamma(\alpha)W_{-\beta, \alpha-\beta}(-x), \qquad x \geq 0,\label{g2}\end{equation}
where $W_{a,b}(\cdot)$ denotes the Wright function with parameters $a$, $b$.

Moreover, the characteristic function of each spherical harmonic
coefficient can be explicitly obtained. Indeed, since
\(
\mathbb{E}U
=
\Gamma(\alpha)/\Gamma(\alpha+\beta),
\)
it follows from \eqref{deft} that
\[
\varphi_{\ell m}(\xi)
=
G\left(
\frac{C_\ell\xi^2}{2\mathbb{E}U}
\right)=
\Gamma(\alpha)
E_{\beta,\alpha}\left(
-\frac{\Gamma(\alpha+\beta)}
{2\Gamma(\alpha)}
C_\ell\xi^2
\right),
\qquad \xi\in\mathbb{R}.
\]
In particular, for $\alpha=1$ and $0<\beta<1$, this reduces to
\[
\varphi_{\ell m}(\xi)
=
E_\beta\left(
-\frac{\Gamma(1+\beta)}{2}
C_\ell\xi^2
\right).
\]
while for $\alpha=\beta=1$, we recover the Gaussian characteristic function.

Thus, according to \eqref{fi}, we have that, in this case
\begin{eqnarray}
    \mathbb Ee^{i(\xi_1\mathcal{T}(x_1)+...+\xi_d \mathcal{T}(x_d))}=\Gamma(\alpha)E_{\beta, \alpha}\left(-\frac{\Gamma(\alpha+\beta)}{2 \Gamma(\alpha)}\sum_{i,j=1}^d\xi_i\xi_j\rho_{i,j}\right), \notag
\end{eqnarray}
since $\mathbb EU=-\Gamma(\alpha) E'_{\beta,\alpha}(0)=\Gamma(\alpha)/\Gamma(\alpha+\beta).$
\begin{remark}
We have the following limiting cases:
\begin{itemize}
    \item for $0<\beta<1$ and $\alpha=1$: we recover the one-parameter Mittag-Leffler case, since \eqref{g} coincides with \eqref{ML} and \eqref{g2} reduces to \eqref{mai}. 

    \item for $\beta=\alpha=1$: the measure $\mu_{\beta, \alpha}$ coincides with the Dirac measure with pole in $1$.

    \item for $\beta=1$ and $\alpha>1$: the measure $\overline\mu_{\beta, \alpha}$ reduces to a beta-type distribution, with density
    \[f_{1,\alpha}(x)=(\alpha-1)(1-x)^{\alpha-1}\mathds{1}_{(0,1)}(x).\]

\end{itemize}
\end{remark}

\item \textbf{Modified Bessel case}

Let us consider the modified Bessel function of the second kind of order
$\lambda>0$, defined as
\[
K_\lambda(x)
=
\frac{1}{2}\int_0^\infty
u^{\lambda-1}
\exp\left\{-\frac{x}{2}\left(u+\frac{1}{u}\right)\right\}\,du,
\qquad x>0.
\]
Let moreover the definition \eqref{deft} hold with $U$ being an inverse Gamma
random variable with density
\begin{equation}
f_U(u)
=
\frac{\beta^\lambda}{\Gamma(\lambda)}
u^{-\lambda-1}e^{-\beta/u}
\mathbf{1}_{(0,\infty)}(u),
\qquad \lambda,\beta>0.
\tag{9.9}
\end{equation}
Its Laplace transform can be written in terms of the modified Bessel
function as
\[
G(s)
=
\mathbb{E}e^{-sU}
=
\frac{2}{\Gamma(\lambda)}
(\beta s)^{\lambda/2}
K_\lambda\left(2\sqrt{\beta s}\right),
\qquad s\geq0.
\]
Notice that
\(
\mathbb{E}U
=
\frac{\beta}{\lambda-1},
\lambda>1,
\)
whereas $\mathbb{E}U=+\infty$ for $0<\lambda\leq1$. Hence, in the
finite-variance framework considered in \eqref{deft}, we restrict ourselves
to $\lambda>1$.

The characteristic function of each spherical harmonic coefficient
can then be explicitly evaluated. Indeed, 
\[
\varphi_{\ell m}(\xi)
=
G\left(
\frac{C_\ell\xi^2}{2\mathbb{E}U}
\right)=
\frac{2}{\Gamma(\lambda)}
\left(
\frac{(\lambda-1)C_\ell\xi^2}{2}
\right)^{\lambda/2}
K_\lambda\left(
\sqrt{2(\lambda-1)C_\ell}\,|\xi|
\right),
\qquad \xi\in\mathbb{R}.
\]
We remark that, after the normalization by $\mathbb{E}U$, the
characteristic function of the spherical harmonic coefficients does
not depend explicitly on the scale parameter $\beta$.

Moreover, according to \eqref{fi}, the joint characteristic function of
the random vector $(\mathcal{ T}(x_1),\ldots,\mathcal T(x_d))$ is
\[
\begin{aligned}
& \mathbb Ee^{i(\xi_1\mathcal{T}(x_1)+...+\xi_d \mathcal{T}(x_d))} =
G\left(
\frac{1}{2\mathbb{E}U}
\sum_{i,j=1}^d\xi_i\xi_j\rho_{i,j}
\right)\\
&\quad =
\frac{2}{\Gamma(\lambda)}
\left(
\frac{\lambda-1}{2}
\sum_{i,j=1}^d\xi_i\xi_j\rho_{i,j}
\right)^{\lambda/2}
K_\lambda\left(
\sqrt{
2(\lambda-1)
\sum_{i,j=1}^d\xi_i\xi_j\rho_{i,j}
}
\right).
\end{aligned}
\]

\end{enumerate}

\end{document}